\numberwithin{equation}{section}
\newtheorem{thm}{Theorem}[section]
\newtheorem{prp}[thm]{Proposition}
\newtheorem{lmm}[thm]{Lemma}
\newtheorem{crl}[thm]{Corollary}
\newtheorem{thmnum}{Theorem}
\newtheorem{crlnum}{Corollary}
\theoremstyle{definition}
\newtheorem{dfn}[thm]{Definition}
\newtheorem{rmk}[thm]{Remark}
\newcounter{saveenumi}
\def\BE#1{\begin{equation}\label{#1}}
\def\EE{\end{equation}}
\def\lan{\langle}
\def\ran{\rangle}
\def\lr#1{\lan#1\ran}
\def\ov#1{\overline{#1}}
\def\ti#1{\tilde{#1}}
\def\wt#1{\widetilde{#1}}
\def\e_ref#1{(\ref{#1})}
\def\smsize#1{\begin{small}#1\end{small}}
\def\sf#1{\textsf{#1}}
\def\tn#1{\textnormal{#1}}
\def\Lau#1{\llceil{#1}\rrceil}
\def\coeff#1{\llbracket{#1}\rrbracket}
\def\bigcoeff#1{\big\llbracket{#1}\big\rrbracket}
\def\lra{\longrightarrow}
\def\Lra{\Longrightarrow}
\def\Llra{\Longleftrightarrow}
\def\1{\mathbf{1}}
\def\al{\alpha}
\def\be{\beta}
\def\ga{\gamma}
\def\de{\delta}
\def\ep{\epsilon}
\def\io{\iota}
\def\om{\omega}
\def\si{\sigma}
\def\th{\theta}
\def\Ga{\Gamma}
\def\La{\Lambda}
\def\Om{\Omega}
\def\Th{\Theta}
\def\De{\Delta}
\def\i{\infty}
\def\hb{\hbar}
\def\cA{\mathcal A}
\def\C{\mathbb C}
\def\cC{\mathcal C}
\def\fC{\mathfrak C}
\def\d{\mathfrak d}
\def\bfd{\mathbf d}
\def\E{\mathbf e}
\def\F{\mathcal F}
\def\H{\mathcal H}
\def\cH{\mathcal H}
\def\I{\mathfrak i}
\def\K{\mathcal K}
\def\cL{\mathcal L}
\def\fL{\mathfrak L}
\def\cM{\mathcal M}
\def\M{\mathfrak M}
\def\N{\mathcal N}
\def\O{\mathcal O}
\def\cO{\mathcal O}
\def\P{\mathbb P}
\def\Pn{\mathbb P^{n-1}}
\def\fR{\mathfrak R}
\def\Q{\mathbb Q}
\def\cQ{\mathcal{Q}}
\def\bfQ{\mathbf{Q}}
\def\fs{\mathfrak s}
\def\bS{\mathbb S}
\def\cS{\mathcal S}
\def\T{\mathbb T}
\def\X{\mathfrak X}
\def\cU{\mathcal U}
\def\V{\mathcal V}
\def\cX{\mathcal{X}}
\def\cY{\mathcal Y}
\def\Z{\mathbb Z}
\def\cZ{\mathcal Z}
\def\a{\mathbf a}
\def\nd{\textnormal{d}}
\def\ne{\textnormal{e}}
\def\x{\mathbf x}
\def\Ext{\textnormal{Ext}}
\def\Edg{\textnormal{Edg}}
\def\ev{\textnormal{ev}}
\def\GW{\textnormal{GW}}
\def\mod{\textnormal{mod~}}
\def\rk{\textnormal{rk}}
\def\Proj{\tn{Proj}}
\def\PGL{\textnormal{PGL}}
\def\Rs#1{\underset{#1}{\mathfrak R}}
\def\SQ{\textnormal{SQ}}
\def\Sym{\textnormal{Sym}}
\def\Ver{\textnormal{Ver}}
\def\vir{\textnormal{vir}}
\begin{document}

\title{Mirror Symmetry for Stable Quotients Invariants}
\author{Yaim Cooper
and Aleksey Zinger\thanks{Partially supported by NSF grants DMS-0635607 and DMS-0846978}}
\date{\today}

\maketitle

\begin{abstract}
The moduli space of stable quotients introduced by Marian-Oprea-Pandharipande provides
a natural compactification of the space of morphisms from nonsingular curves
to a nonsingular projective variety and carries a natural virtual class.
We show that the analogue of Givental's $J$-function for the resulting twisted projective
invariants is described by the same mirror hypergeometric series as the corresponding
Gromov-Witten invariants (which arise from the moduli space of stable maps),
but without the mirror transform (in the Calabi-Yau case).
This implies that the stable quotients and Gromov-Witten twisted invariants agree
if there is enough ``positivity", but not in all cases.
As a corollary of the proof, we show that certain twisted Hurwitz numbers arising in the stable
quotients theory are also described by a fundamental object associated with
this hypergeometric series.
We thus completely answer some of the questions posed by Marian-Oprea-Pandharipande
concerning their invariants.
Our results suggest
a deep connection between the stable quotients invariants
of complete intersections and the geometry of the mirror families.
As in Gromov-Witten theory, computing Givental's $J$-function (essentially
a generating function for genus 0 invariants with 1 marked point) is key to computing
stable quotients invariants of higher genus and with more marked points;
we exploit this in forthcoming papers.
\end{abstract}

\tableofcontents

\section{Introduction}
\label{intro_sec}

\noindent
Gromov-Witten invariants of a smooth projective variety~$X$ are certain counts of curves in~$X$
that arise from integrating against the virtual class of the moduli space of stable maps.
These are known to possess striking structures which are often completely unexpected
from the classical point of view.
For example, the genus~0 Gromov-Witten invariants of a quintic threefold,
i.e.~a degree~5 hypersurface in~$\P^4$, are related by a so-called
\sf{mirror formula} to a certain hypergeometric series.
This relation was explicitly predicted in~\cite{CdGP}
and mathematically confirmed in~\cite{Gi} and~\cite{LLY} in the 1990s.
In fact, the prediction of~\cite{CdGP} has been shown to be a special case of
mirror symmetry for certain twisted Gromov-Witten invariants
of projective complete intersections of sufficiently small
total multi-degree \cite{Gi2,LLY3};
these invariants are associated with direct sums of line bundles (positive and negative)
over~$\P^n$.
This relation is often described by assembling two-point Gromov-Witten invariants
(but without constraints on the second marked point) into a generating function,
known as Givental's $J$-function.
In most cases (in particular, when the anticanonical class of the corresponding
complete intersection is at least twice the hyperplane class),
the $J$-function is precisely equal to the appropriate hypergeometric series.
In certain borderline cases, they differ by a simple exponential factor.
In the remaining Calabi-Yau cases, the correcting factors are more complicated
and the two power series also differ by a change of the power series variable,
known as the mirror~map.\\

\noindent
The gauged linear $\si$-model of \cite{Witten} counts rational curves in toric
complete intersections by integrating over the natural toric compactifications
of the spaces of rational maps into the ambient toric variety.
Based on physical considerations, it is shown in~\cite{MP} that
the (three-point) Gromov-Witten and gauged linear $\si$-model
generating functions for the well-studied quintic threefold are related
by the mirror map, with a minor additional adjustment;
see \cite[(4.24),(4.28)]{MP}, for example.
This suggests that the mirror map relating the A (symplectic) side of mirror symmetry
to the B (complex geometric) side may be more reflective of the choice of
curve counting theory on the A-side
than of the mirror symmetry itself.
Unfortunately from the mathematical standpoint,
the compactifying spaces in the gauged linear $\si$-model
do not possess many of the nice properties of the spaces of stable maps
and require fixing a complex structure on the domain of the maps.\\

\noindent
The moduli spaces of stable quotients, $\ov{Q}_{g,m}(X,d)$, constructed in~\cite{MOP09}
provide an alternative to
the moduli spaces of stable maps, $\ov\M_{g,m}(X,d)$, for compactifying spaces of
degree~$d$ morphisms from genus~$g$ nonsingular curves with $m$~marked points to
a projective variety~$X$ (with a choice of polarization).\footnote{These ``compactifications",
$\ov{Q}_{g,m}(X,d)$ and $\ov\M_{g,m}(X,d)$, are generally just compact spaces
containing the spaces of morphisms; the latter need not be dense in
$\ov{Q}_{g,m}(X,d)$ or $\ov\M_{g,m}(X,d)$.}
In this paper, we show that the genus~0 stable quotients theory,
just like  the gauged linear $\si$-model, of Calabi-Yau projective complete intersections
is related to their genus~0 Gromov-Witten theory essentially by the mirror map;
see \e_ref{GWvsSQmirr_e}.
Based on the approaches of~\cite{bcov1} and~\cite{g0ci}, this relationship between the
stable quotients and Gromov-Witten invariants should extend to higher genera;
we expect to confirm this in the genus~1 case in~\cite{CZ3}.
In~\cite{CZ2}, it is shown that the genus~0 three-point
stable quotients and Gromov-Witten invariants of Calabi-Yau projective complete intersection
threefolds are related precisely by the mirror map.
The mirror formula obtained in this paper is central to the computations
in~\cite{CZ3} and~\cite{CZ2}.
Thus, our paper provides further evidence that the mirror map
is an entirely A-side feature and suggests that the stable quotients theory may be
the curve counting theory most directly related to the B-side of mirror symmetry.
In light of the results in this paper,
we also hope that certain properties of the mirror map, such as the integrality
of its coefficients \cite{LY2,KR}, can be explained geometrically by comparing
the stable quotients and Gromov-Witten invariants.\\

\noindent
The moduli space $\ov{Q}_{g,m}(\Pn,d)$ consists of equivalence classes
of tuples
$$(\cC,y_1,\ldots,y_m;S\subset\C^n\!\otimes\!\cO_{\cC}),$$
where $(\cC,y_1,\ldots,y_m)$ is a genus~$g$ nodal curve with $m$ marked points
and $S\subset\C^n\!\otimes\!\cO_{\cC}$ is a subsheaf of rank~1 and degree~$-d$,
that satisfy certain stability and torsion properties; see Section~\ref{SQdfn_sec}.
This moduli space is smooth if $g\!=\!0$ or $(g,m)\!=\!(1,0)$
and carries a virtual class in all cases.
There is a natural surjective contraction morphism
$$c\!: \ov\M_{g,m}(\Pn,d)\lra\ov{Q}_{g,m}(\Pn,d), $$
which is not injective for $d\!>\!0$ and generally contracts a lot of boundary strata.
For example, $\ov{Q}_{1,0}(\Pn,d)$ is irreducible and has Picard rank just~2; see
\cite[Theorem~4.1]{Co11}.
Thus, the moduli spaces of stable quotients are much more efficient
compactifications than the moduli spaces of stable~maps.
However, in the case $X\!=\!\Pn$ and $(g,m)\!=\!(0,3)$, this compactification
is larger than the gauged linear $\si$-model compactification;
see \cite[Section~3.7]{MP}.\\

\noindent
As in the case of stable maps, there are evaluation morphisms,
$$\ev_i\!: \ov{Q}_{g,m}(\Pn,d)\lra\Pn, \qquad i=1,2,\ldots,m,$$
corresponding to each marked point.\footnote{The morphism $\ev_i$
sends a tuple $(\cC,y_1,\ldots,y_m,S)$ to the line $S_{y_i}\!\subset\!\C^n$
if $S$ is viewed as a line subbundle of the trivial rank~$n$ bundle over~$\cC$.}
There is also a universal curve
$$\pi\!: \cU\lra \ov{Q}_{g,m}(\Pn,d)$$
with $m$ sections $\si_1,\ldots,\si_m$ (given by the marked points) and
a universal rank~1 subsheaf
$$\cS\subset \C^n\!\otimes\!\cO_{\cU}\,.$$
For each $i\!=\!1,2,\ldots,m$, let
$$\psi_i=-\pi_*(\si_i^2)   \in H^2\big( \ov{Q}_{g,m}(\Pn,d)\big)$$
be the first Chern class of the universal cotangent line bundle, as usual.
By \cite[Theorems~2,3]{MOP09}, the moduli space $\ov{Q}_{g,m}(\Pn,d)$ carries
a canonical virtual class and
\BE{VFC_e} c_*\big[\ov\M_{g,m}(\Pn,d) \big]^{\vir}=\big[\ov{Q}_{g,m}(\Pn,d) \big]^{\vir}. \EE
Since the evaluation morphisms $\ev_i$ and the $\psi$-classes on the two moduli spaces
commute with $c$ and $c^*$, respectively, \e_ref{VFC_e} implies that the (untwisted)
Gromov-Witten and stable quotients invariants of~$\P^{n-1}$, obtained by
integrating pull-backs of cohomology classes on~$\Pn$ by~$\ev_i$ and powers of $\psi$-classes
against the two virtual classes, are the same; see \cite[Theorem~3]{MOP09}.
In this paper, we study twisted invariants in genus~0, arising from sums of line bundles over~$\Pn$;
they relate invariants of projective complete intersections to
the invariants of the ambient space.\\

\noindent
For $l\!\in\!\Z^{\ge0}$ and $l$-tuple $\a\!=\!(a_1,\ldots,a_l)\!\in\!(\Z^*)^l$ of nonzero integers,
let
\begin{gather*}
|\a|=\sum_{k=1}^l|a_k|,  \qquad
\lr\a=\prod_{a_k>0}\!a_k\bigg/\!\!\prod_{a_k<0}\!a_k\,, \qquad
\a!=\prod_{a_k>0}\!\!a_k!\,, \qquad
\a^{\a}=\prod_{k=1}^l a_k^{|a_k|}\,,\\
\ell^{\pm}(\a)=\big|\{k\!:\,(\pm1)a_k\!>\!0\}\big|, \qquad
\ell(\a)=\ell^+(\a)-\ell^-(\a).
\end{gather*}
If in addition $n\!\in\!\Z^+$ and $d\!\in\!\Z^+$, let
\BE{Vprdfn_e}
\dot\V_{n;\a}^{(d)}=\bigoplus_{a_k>0}R^0\pi_*\big(\cS^{*a_k}(-\si_1)\big)
 \oplus \bigoplus_{a_k<0}R^1\pi_*\big(\cS^{*a_k}(-\si_1)\big)
 \lra \ov{Q}_{0,2}(\Pn,d),\EE
where $\pi\!:\cU\!\lra\!\ov{Q}_{0,2}(\Pn,d)$ is the universal curve;
this sheaf is locally free.
The Euler class of the analogue of this sheaf in Gromov-Witten theory describes
the genus~0 invariants of the total space of the vector bundle
\BE{VBtot_e}
\bigoplus_{a_k<0}\cO_{\P^{n-1}}(a_k)\big|_{X_{(a_k)_{a_k>0}}}\lra X_{(a_k)_{a_k>0}}\,,\EE
where $X_{(a_k)_{a_k>0}}\subset\Pn$ is a nonsingular complete intersection of multi-degree
$(a_k)_{a_k>0}$.
The situation in the stable quotients theory is similar.
If $a_k\!>\!0$ for all~$k$, the moduli space $\ov{Q}_{0,2}(X_{\a},d)$ carries a natural virtual
fundamental class and the resulting invariants of $X_{\a}$ are described
by the Euler class of~\e_ref{Vprdfn_e};
see \cite[Theorem~4.5.2]{CKM} and \cite[Proposition~6.2.3]{CKM}, respectively.\\

\noindent
The stable quotients analogue of Givental's $J$-function is given~by
\BE{Zdfn_e}
Z_{n;\a}(x,\hb,q) \equiv
1+\sum_{d=1}^{\i}\!q^d\ev_{1*}\left[\frac{e(\dot\V_{n;\a}^{(d)})}{\hb\!-\!\psi_1}\right]
\in H^*(\Pn)\big[\big[\hb^{-1},q\big]\big],\EE
where $\ev_1:\ov{Q}_{0,2}(\Pn,d)\lra\Pn$ is as before
and $x\!\in\!H^2(\Pn)$ is the hyperplane class.
For example, if $|\a|\!=\!n$, this power series
is equivalent to the set of numbers
$$\int_{\ov{Q}_{0,2}(\Pn,d)}\!\!\!\E(\dot\V_{n;\a}^{(d)})\psi_1^p\ev_1^*x^{n-2-p}\,,
\qquad d\in \Z^+,~0\!\le\!p \le n\!-\!2\,.$$
By \cite[Proposition~6.2.3]{CKM},
\BE{SQnumsdfn_e}\begin{split}
\SQ_{n;\a}^{(d)}\big(\tau_p(x^{n-2-\ell(\a)-p}),1\big)
&\equiv \int_{[\ov{Q}_{0,2}(X_{n;\a},d)]^{\vir}}\!\!\!\psi_1^p\ev_1^*x^{n-2-\ell(\a)-p}\\
&=\lr\a\int_{\ov{Q}_{0,2}(\Pn,d)}\!\!\!\E(\dot\V_{n;\a}^{(d)})\psi_1^p\ev_1^*x^{n-2-p}
\quad\forall~p\le n\!-\!2\!-\!\ell(\a)\,;\end{split}\EE
in particular, these numbers vanish if $p\!\le\!\ell^-(\a)\!-\!2$
(because $x^{n-p}\!=\!0$ on~\e_ref{VBtot_e} if $p\!\le\!\ell^+(\a)$).
The usual Givental's $J$-function, which we denote by~$Z_{n;\a}^{\GW}(x,\hb,q)$,
is defined as in~\e_ref{Zdfn_e} with $\ov{Q}_{0,2}(\Pn,d)$ replaced by~$\ov\M_{0,2}(\Pn,d)$.\\

\noindent
The hypergeometric series describing Givental's $J$-function in Gromov-Witten theory
is given~by
\BE{Ydfn_e}
Y_{n;\a}(x,\hb,q)\equiv\sum_{d=0}^{\i}q^d
\frac{\prod\limits_{a_k>0}\prod\limits_{r=1}^{a_kd}(a_kx\!+\!r\hb)
\prod\limits_{a_k<0}\!\!\prod\limits_{r=0}^{-a_kd-1}\!\!(a_kx\!-\!r\hb)}
{\prod\limits_{r=1}^{d}(x\!+\!r\hb)^n}
\in \Q[x]\big[\big[\hb^{-1},q\big]\big]\,.\EE
In the pure Calabi-Yau case, i.e.~$a_k\!>\!0$ for all $k$ and $|\a|\!=\!n$, we also need
the power series
\BE{Idfn_e}I_{n;\a}(q)=
\begin{cases}
1,&\hbox{if}~|\a|\!-\!\ell^-(\a)\!<\!n;\\
Y_{n;\a}(0,1,q) =\sum\limits_{d=0}^{\i}q^d\frac{\prod\limits_{k=1}^l(a_kd)!}{(d!)^n},&
\hbox{if}~|\a|\!-\!\ell^-(\a)\!=\!n.
\end{cases}\EE
By the following theorem, the stable quotients analogue of Givental's $J$-function
is also described by the hypergeometric series~\e_ref{Ydfn_e}, but
in a more straightforward way.

\begin{thmnum}\label{main_thm}
If $l\!\in\!\Z^{\ge0}$, $n\!\in\!\Z^+$, and $\a\!\in\!(\Z^*)^l$ are such that
$|\a|\!\le\!n$, then the stable quotients analogue of Givental's $J$-function
satisfies
\BE{ZvsY_e} Z_{n;\a}(x,\hb,q)=\frac{Y_{n;\a}(x,\hb,q)}{I_{n;\a}(q)}
\in H^*(\Pn)\big[\big[\hb^{-1},q\big]\big].\EE
\end{thmnum}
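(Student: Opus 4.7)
The plan is to prove \eqref{ZvsY_e} by $T$-equivariant virtual localization on $\ov{Q}_{0,2}(\Pn,d)$ with respect to the standard $(\C^*)^n$-action on $\Pn$, in the spirit of Givental's proof of the Gromov--Witten mirror theorem but exploiting the simpler structure of the $T$-fixed loci in the stable quotients setting. First, lift $T=(\C^*)^n$ to $\cO(-1)$ with weights $\alpha_1,\ldots,\alpha_n$ at the fixed points $P_1,\ldots,P_n\in\Pn$; this induces $T$-actions on $\ov{Q}_{0,2}(\Pn,d)$, on the universal subsheaf $\cS$, and hence on $\dot\V^{(d)}_{n;\a}$, and produces equivariant refinements $Z^T_{n;\a}$ and $Y^T_{n;\a}$ of the two sides of \eqref{ZvsY_e}, taking values in $H^*_T(\Pn)[[\hb^{-1},q]]$. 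Since this ring injects into $\prod_i H^*_T(\{P_i\})[[\hb^{-1},q]]$ by restriction, it suffices to verify the identity after the substitution $x=\alpha_i$ for each $i$ and then pass to the nonequivariant limit.

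Next, I would apply virtual localization to express $Z^T_{n;\a}|_{x=\alpha_i}$ as a sum over $T$-fixed loci of $\ev_1^{-1}(P_i)\subset\ov{Q}_{0,2}(\Pn,d)$. A $T$-fixed stable quotient lying over $P_i$ has domain a chain of $\P^1$'s on which $\cS$ sits inside the coordinate subsheaf $\C e_i\otimes\cO$; once one fixes how the degree $d$ distributes among the components---in particular the degrees $d_1,d_2$ carried by bubbles attached at the two marked points---the remaining parameters are just the locations of the base-point divisors on each component, so each fixed locus is a product of projective spaces. On these, the equivariant classes $\psi_1$, $e(\dot\V^{(d)}_{n;\a})$, and the virtual normal bundle all admit transparent presentations in terms of tautological classes and the weights $\alpha_j-\alpha_i$.

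Then I would carry out the pushforward by $\ev_1$ and the residue integrals. The normal-bundle weights should supply precisely the denominator $\prod_r(x+r\hb)^n$ of \eqref{Ydfn_e}, while $e(\dot\V^{(d)}_{n;\a})$ supplies the numerator $\prod_{a_k>0}\prod_r(a_kx+r\hb)\cdot\prod_{a_k<0}\prod_r(a_kx-r\hb)$, and the bubble contributions at $y_1$ should collapse into a single $i$-independent scalar series, which one must then identify with $1/I_{n;\a}(q)$.

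The main obstacle is this last identification. In Gromov--Witten theory the analogous localization produces $Y_{n;\a}$ only after a nontrivial reparametrization of $q$ (the mirror map), because the GW fixed loci involve Deligne--Mumford moduli of stable curves whose contributions depend on $\hb$. In the stable quotients setting, the fixed loci are merely products of projective spaces, and the correction should be purely multiplicative. The delicate point is to verify that under the hypothesis $|\a|\le n$, the bubble sum at $y_1$ evaluates precisely to $I_{n;\a}(q)$---that no $\hb$-dependent corrections survive and no change of variable in $q$ is needed. This is the geometric heart of the theorem and explains why the stable-quotients $J$-function is related to $Y_{n;\a}$ without a mirror map.
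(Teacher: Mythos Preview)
Your proposal has the right starting point---passing to the equivariant setting and applying localization on $\ov{Q}_{0,2}(\Pn,d)$---but the description of the $\T$-fixed loci is incorrect, and this undermines the rest of the plan. A $\T$-fixed stable quotient does \emph{not} have $\cS$ contained in a single coordinate line $\C e_i\otimes\cO$ on every component. The fixed loci are indexed by decorated strands whose \emph{edges} carry honest degree-$\d(e)$ covers of coordinate lines $\ov{P_{\mu(v_1)}P_{\mu(v_2)}}\subset\Pn$, while \emph{vertices} with $\d(v)>0$ carry torsion quotients supported over a single fixed point. The vertex moduli are not bare projective spaces but the Hassett spaces $\ov\cM_{0,2|\d(v)}$ of weighted stable curves; these carry nontrivial $\psi$-classes, and the restriction of $(\hb-\psi_1)^{-1}$ to such a locus is a genuine polynomial in $\hb^{-1}$ of degree $\d(v_{\min})$, not a scalar. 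So the ``bubble sum at $y_1$'' does not collapse to an $\hb$-independent series that one could hope to identify with $I_{n;\a}(q)$; in the Calabi--Yau case $|\a|=n$ these vertex contributions are nonzero in every $q$-degree and in many powers of $\hb^{-1}$.

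More seriously, you correctly flag the ``delicate point'' but offer no mechanism to resolve it, and the paper's mechanism is the real content of the theorem. Localization shows that both $\cZ_{n;\a}$ and $\cY_{n;\a}$ are $\fC$-recursive (the edge contributions supply the primary structure coefficients $\fC_i^j(d)$) and satisfy a self-polynomiality condition (proved via an auxiliary localization on $\ov{Q}_{0,2}(\P^1\times\Pn,(1,d))$). By a uniqueness lemma, two such series agree iff their $\bmod\,\hb^{-2}$ parts agree. When $|\a|\le n-2$ this is immediate by a dimension count. For $|\a|=n-1,n$, the paper instead shows that the identity \e_ref{ZvsY_e} is equivalent to a family of identities among twisted Hurwitz integrals on $\ov\cM_{0,2|d}$ relating the secondary recursion coefficients of $\cY_{n;\a}$; a symmetric-function argument then shows these identities are \emph{independent of $n$}, so they hold for all $(n,\a)$ because they already hold when $|\a|\le n-2$. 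This $n$-stability trick---bootstrapping the Calabi--Yau case from the Fano case---is exactly what replaces the direct evaluation you were hoping for, and it is absent from your outline.
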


\begin{crlnum}\label{Fano2_crl}
If $l\!\in\!\Z^{\ge0}$, $n\!\in\!\Z^+$, and $\a\!\in\!(\Z^*)^l$ are such that
$|\a|\!\le\!n$ and $|\a|\!-\!\ell^-(\a)\!\le\!n\!-\!2$,  then
$$Z_{n;\a}^{\GW}(x,\hb,q)=Z_{n;\a}(x,\hb,q)\,.$$
\end{crlnum}

\begin{crlnum}\label{CY_crl}
If $l\!\in\!\Z^{\ge0}$, $n\!\in\!\Z^+$, and $\a\!\in\!(\Z^*)^l$ are such that
$|\a|\!=\!n$, then
\BE{GWvsSQmirr_e}
Z_{n;\a}^{\GW}(x,\hb,Q)=
\ne^{-J_{n;\a}(q)x/\hb}Z_{n;\a}(x,\hb,q)\,,
\qquad \hbox{where}\quad Q=q\cdot\ne^{J_{n;\a}(q)},\EE
with the standard change of variables $q\!\lra\!Q$ of Gromov-Witten theory described~by
\BE{SQvsJ_e} \lr\a J_{n;\a}(q)
=\sum_{d=1}^{\i}q^d\SQ_{n;\a}^{(d)}\big(\tau_0(x^{n-2-\ell(\a)}),1\big) .\EE
\end{crlnum}

\noindent
If $|\a|\!\le\!n$ and $|\a|\!-\!\ell^-(\a)\!\le\!n\!-\!2$, \e_ref{ZvsY_e} also holds with
$Z_{n;\a}(x,\hb,Q)$ replaced by $Z_{n;\a}^{\GW}(x,\hb,Q)$;
see \cite[Theorem~9.1]{Gi2} for the $\ell^-(\a)\!=\!0$ case
and \cite[Theorem~5.1]{Elezi} for the $\ell^-(\a)\!\ge\!1$ case.
Thus, Corollary~\ref{Fano2_crl} is an immediate consequence of Theorem~\ref{main_thm}.\\

\noindent
If $|\a|\!=\!n$ and $\ell^-(\a)\!\le\!1$, the Gromov-Witten analogue of~\e_ref{ZvsY_e}
involves a mirror transform between the power series variable on the left-hand side
(now denoted by~$Q$) and the power series variable~$q$ on the right-hand side.
It takes the~form
\BE{GWCY_e}
Z_{n;\a}^{\GW}(x,\hb,Q)=\ne^{-J_{n;\a}(q)x/\hb}\,\frac{Y_{n;\a}(x,\hb,q)}{I_{n;\a}(q)}\,,
\qquad \hbox{where}\quad Q=q\cdot\ne^{J_{n;\a}(q)},\EE
for an explicit power series $J_{n;\a}(q)\in q\cdot\Q[[q]]$;
see \cite[Theorem~11.8]{Gi2} for the $\ell^-(\a)\!=\!0$ case
and \cite[Theorem~5.1]{Elezi} for the $\ell^-(\a)\!=\!1$ case.
Along with~\e_ref{GWCY_e}, Theorem~\ref{main_thm} immediately implies
the $\ell^-(\a)\!\le\!1$ case of~\e_ref{GWvsSQmirr_e};
the $\ell^-(\a)\!\ge\!2$ case of~\e_ref{GWvsSQmirr_e}, where $J_{n;\a}(q)\!=\!0$,
follows from Corollary~\ref{Fano2_crl}.
By~\e_ref{Zdfn_e} and~\e_ref{SQnumsdfn_e}, the right-hand side of~\e_ref{SQvsJ_e}
is the coefficient of~$(h^{-1})^0$ in $Z_{n;\a}(x,\hb,q)$ times~$\lr\a$.
By the string relation of Gromov-Witten theory \cite[Section~26.3]{MirSym},
the coefficient of~$(h^{-1})^0$ in $Z_{n;\a}^{\GW}(x,\hb,q)$ is zero.
Thus, \e_ref{SQvsJ_e} follows from~\e_ref{GWvsSQmirr_e}.\\

\noindent
In the remaining case, i.e.~$|\a|\!=\!n\!-\!1$ and $\ell^-(\a)\!=\!0$,
the Gromov-Witten analogue of~\e_ref{ZvsY_e} is the relation
\BE{Fano1_e}Z_{n;\a}^{\GW}(x,\hb,q)=\ne^{-{\a}!q/\hb}\,\frac{Y_{n;\a}(x,\hb,q)}{I_{n;\a}(q)};\EE
see \cite[Theorem~10.7]{Gi2}.
Theorem~\ref{main_thm} implies that~\e_ref{Fano1_e} holds with $Y_{n;\a}(x,\hb,q)$
replaced by $Z_{n;\a}(x,\hb,q)$.
The same comparisons apply to the equivariant versions of Givental's $J$-function
for the stable quotients invariants, computed by Theorem~\ref{equiv_thm},
and of Givental's $J$-function for Gromov-Witten invariants
computed by
\cite[Theorems~9.5,10.7,11.8]{Gi2} in the $\ell^-(\a)\!=\!0$ case and
\cite[Theorem~5.3]{Elezi} in the $\ell^-(\a)\!\ge\!1$ case.
Thus, the Gromov-Witten and stable quotients invariants are  related essentially by the mirror map.
By \cite[Theorem~1]{CZ2}, the primary (without $\psi$-classes) genus~0 three-point
Gromov-Witten and stable quotients invariants of Calabi-Yau complete intersection threefolds are
related precisely by the change of variables $Q\!\lra\!q$ and the rescaling
$I_{n;\a}(q)$,  i.e.~the exponential factor
in~\e_ref{GWvsSQmirr_e} can be seen as an artifact of the presence of~$\hb$.\\

\noindent
Table~\ref{GWvsSQ_tbl} lists a few Gromov-Witten and stable quotients
invariants of the quintic threefold $X_{(5)}\!\subset\!\P^4$
obtained from \e_ref{GWCY_e} and~\e_ref{ZvsY_e}, respectively.
In the first column of this table,
\begin{equation*}\begin{split}
\GW_{5;(5)}^{(d)}\big(\tau_p(x^{2-p}),1\big)\equiv
\int_{[\ov\M_{0,2}(X_{(5)},d)]^{\vir}}\!\!\!\psi_1^p\ev_1^*x^{2-p}
=
5\int_{\ov\M_{0,2}(\P^4,d)}\!\!\!\E(\dot\V_{5;(5)}^{(d)})\psi_1^p\ev_1^*x^{3-p}\,,
\end{split}\end{equation*}
where $\dot\V_{5;(5)}^{(d)}$ is the usual analogue
of~\e_ref{Vprdfn_e} over $\ov\M_{0,2}(\P^4,d)$.
By the string, dilaton, and divisor relations \cite[Section~26.3]{MirSym},
\BE{GWrel_e}
\frac{\GW_{5;(5)}^{(d)}(\tau_1(x),1)}{d}=\deg[\ov\M_{0,0}(X_{(5)},d)]^{\vir}
=-\frac{\GW_{5;(5)}^{(d)}(\tau_2(1),1)}{2}.\EE
These relations are obtained using the forgetful maps
$$\ov\M_{0,2}(X_{(5)},d)\stackrel{f_2}\lra\ov\M_{0,1}(X_{(5)},d)
\stackrel{f_1}\lra\ov\M_{0,0}(X_{(5)},d),$$
which have no analogues in the stable quotients theory.
The middle term in~\e_ref{GWrel_e} does not have an analogue in the stable quotients theory either,
while the analogues of the outer terms in~\e_ref{GWrel_e} are not equal,
as Table~\ref{GWvsSQ_tbl} illustrates.
The numbers $\GW_d(\tau_0(x^2),1)$ vanish, since the classes $\ev_1^*x^3$ on
$\ov\M_{0,2}(\P^4,d)$ are the pullbacks by the forgetful morphisms~$f_1$
of the classes $\ev_1^*x^3$ and $\ov\M_{0,1}(\P^4,d)$.
The analogous stable quotients invariants do not vanish; see~\e_ref{SQvsJ_e}.\\

\begin{table}
\begin{center}
\begin{tabular}{||c|c|c|c|c||}
\hline\hline
$d$ &
\begin{small}
$\displaystyle\frac{\GW_{5;(5)}^{(d)}(\tau_1(x),1)}{d}=-\frac{\GW_{5;(5)}^{(d)}(\tau_2(1),1)}{2}$
\end{small}&
\begin{small} $\SQ_{5;(5)}^{(d)}(\tau_0(x^2),1)$\end{small} &
\begin{small} $\displaystyle\frac{\SQ_{5;(5)}^{(d)}(\tau_1(x),1)}{d}$\end{small} &
\begin{small}$-\displaystyle\frac{\SQ_{5;(5)}^{(d)}(\tau_2(1),1)}{2}$\end{small} \\
\hline
1& \begin{small}2875\end{small}& \begin{small}3850\end{small}&
\begin{small}2875\end{small}& \begin{small}2875\end{small}\\
\hline
2& $\frac{4876875}{8}$& \begin{small}3589125\end{small}&
$\frac{19660875}{8}$& $\frac{13731875}{8}$\\
\hline
3& $\frac{8564575000}{27}$& $\frac{16126540000}{3}$&
$\frac{76579948750}{27}$& $\frac{175851761875}{27}$\\
\hline
4& $\frac{15517926796875}{64}$& $\frac{19736572853125}{2}$&
$\frac{801135363990625}{192}$& $\frac{1123498525946875}{576}$\\
\hline
5& \begin{small}229305888887648\end{small}& \begin{small}\!\!20310770587807020\end{small}\!\!&
\!\!$\frac{14274970288322171}{2}$\!\!& \!\!$\frac{125303832133435229}{48}$\!\!\\
\hline\hline
\end{tabular}
\end{center}
\caption{Some genus~0 GW- and SQ-invariants of the quintic threefold $X_{(5)}$}
\label{GWvsSQ_tbl}
\end{table}

\noindent
It is interesting to observe that the numbers $d\SQ_d(\tau_0(x^2),1)$ are integers
if $(n,\a)\!=\!(5,(5))$ and $d\!\le\!1000$;
as noted in~\cite[Section~1]{CZ2}, the same is the case for the numbers
$d\SQ_d(\tau_0(x),\tau_0(x))$.
Two-point GW-invariants of such form are equal to three-point primary GW-invariants,
which are integers, when the target is a Calabi-Yau, due to symplectic topology
considerations; see \cite[Section~7.3]{MS} and \cite{RT}, for example.
Since the stable quotients invariants are purely algebro-geometric objects,
the apparent integrality of the primary invariants $d\SQ_d(\cdot,\cdot)$ suggests that
there should be an algebro-geometric reason behind the integrality of these numbers,
as well as of the closely related three-point GW-invariants.\\

\noindent
As in the case of mirror symmetry for Gromov-Witten invariants, Theorem~\ref{main_thm}
follows immediately from its $\T^n$-equivariant version,
Theorem~\ref{equiv_thm} in Section~\ref{equivthm_sec}.
The latter is proved using the Atiyah-Bott localization theorem~\cite{ABo} on
$\ov{Q}_{0,2}(\Pn,d)$, which reduces the equivariant version of the power series~\e_ref{Zdfn_e},
the power series~$\cZ_{n;\a}(\x,\hb,q)$ defined by~\e_ref{cZdfn_e} below,
to a sum of rational functions over certain graphs.
As in the case of Gromov-Witten invariants, $\cZ_{n;\a}(\x,\hb,q)$ is $\fC$-recursive
in the sense of Definition~\ref{recur_dfn}, with the collection~$\fC$ of structure
coefficients given by~\e_ref{Cdfn_e},
and satisfies the self-polynomiality condition of Definition~\ref{SPC_dfn};
the same is the case of the equivariant version of the power series~\e_ref{Ydfn_e},
the power series $\cY_{n;\a}(\x,\hb,q)$ defined by~\e_ref{cYdfn_e}.
Thus, the two power series
$$\cY_{n;\a}(\x,\hb,q),\cZ_{n;\a}(\x,\hb,q)\in
H_{\T}^*(\Pn)\big[\big[\hb^{-1},q\big]\big]$$
are determined by their $\mod(\hb^{-1})^2$ part; see Proposition~\ref{uniqueness_prp}.
It is straightforward to determine the $\mod(\hb^{-1})^2$-part of
the power series~$\cY_{n;\a}$.
The $\mod(\hb^{-1})^2$-part of Givental's $J$-function in Gromov-Witten theory is~1 in all cases
for a simple geometric reason.
This approach thus confirms the analogue of Theorem~\ref{main_thm} in
{\it Gromov-Witten} theory
and thus mirror symmetry for the genus~0 Gromov-Witten invariants
of projective complete intersections.\\

\noindent
In the {\it stable quotients} theory,
the situation with the $\mod(\hb^{-1})^2$-part of $\cZ_{n;\a}$ is
different.
It is still~1, for dimensional reasons, if $|\a|\!\le\!n\!-\!2$.
If $|\a|\!=\!n\!-\!1$, the $\mod(\hb^{-1})^2$-part of $\cZ_{n;\a}$
vanishes in the $q$-degrees~2 and higher;
it is straightforward to see that the coefficient of~$q^1$ $\mod(\hb^{-1})^2$
is $\a!/\hb$ if $\ell^-(\a)\!=\!0$ and~$0$
otherwise.\footnote{Even this is not necessary due to our approach to the Calabi-Yau case.}
So, in these cases, the proof of mirror symmetry for Gromov-Witten invariants carries over
to the stable quotients invariants.
However, in the Calabi-Yau case, $|\a|\!=\!n$, the $\mod(\hb^{-1})^2$-part of $\cZ_{n;\a}$
is not zero in all $q$-degrees if $\ell^-(\a)\!\le\!1$, and we see no a priori reason
for the coefficients of positive $q$-degrees to vanish even if $\ell^-(\a)\!\ge\!2$.
Thus, the proof of mirror symmetry for Gromov-Witten invariants can{\it not}
directly carry over to the stable quotients invariants in the Calabi-Yau cases.\\

\noindent
Since the coefficients of~$q^0$ on the two sides of the identity in Theorem~\ref{equiv_thm}
are the same (both are~1), it is equivalent to the equality of the auxiliary coefficients,
$\cY_i^r(d)$ and $\cZ_i^r(d)$, in the recursions~\e_ref{recurdfn_e2}
for~$\cY_{n;\a}$ and~$\cZ_{n;\a}$, respectively.
By a direct algebraic computation, the coefficients~$\cY_i^r(d)$ are expressible
in terms of  certain residues of~$\cY$; see Lemma~\ref{cYrec_lmm}.
Analyzing the relevant graphs, one can show that the coefficients~$\cZ_i^r(d)$ are
likewise expressible in terms of certain residues of~$\cZ$,
but in a different way; see Proposition~\ref{cZrec_prp}.
Thus, for each pair $(n,\a)$ with $|\a|\!\le\!n$,
the identity in Theorem~\ref{equiv_thm} is equivalent to certain identities for
the residues of~$\cY_{n;\a}$; see Lemma~\ref{cYcZcomp_lmm}.
Since $\cY_{n;\a}\!=\!\cZ_{n;\a}$ whenever $|\a|\!\le\!n\!-\!2$,
these identities hold whenever $|\a|\!\le\!n\!-\!2$.
By the proof of Proposition~\ref{cYcZrel_prp},
the validity of these identities is independent of~$n$,
and thus they hold for all pairs $(n,\a)$.
This yields Theorem~\ref{equiv_thm} and thus Theorem~\ref{main_thm}.\\

\noindent
The relations of Lemma~\ref{cYcZcomp_lmm} involve twisted Hurwitz numbers arising
from certain moduli spaces of weighted stable curves $\ov\cM_{0,2|d}$;
see Section~\ref{SQdfn_sec}.
These relations in turn uniquely determine the twisted Hurwitz numbers, even equivariantly,
in terms of a key power series associated with~$\cY_{n;\a}$;
see Theorems~\ref{main0_thm} and~\ref{equiv0_thm} in Sections~\ref{SQdfn_sec}
and~\ref{equivthm_sec}, respectively.
Based on developments in Gromov-Witten theory,
one would expect these closed formulas to be a key ingredient in computing twisted genus~1
stable quotients invariants and thus answering yet another question raised in~\cite{MOP09}.\\

\noindent
The proof that the equivariant version of Givental's $J$-function in Gromov-Witten theory
satisfies the self-polynomiality condition of Definition~\ref{SPC_dfn} uses
the localization theorem~\cite{ABo} to compute integrals over the moduli
space $\ov\M_{0,2}(\P^1\!\times\!\Pn,(1,d))$.
Our proof that the equivariant stable quotients analogue of  Givental's $J$-function
satisfies the self-polynomiality condition uses the moduli space
of stable pairs of quotients $\ov{Q}_{0,2}(\P^1\!\times\!\Pn,(1,d))$
in a similar way; see Section~\ref{SPC_sec}.
This moduli space is a special case of the moduli space
$$\ov{Q}_{g,m}\big(\P^{n_1-1}\!\times\!\ldots\!\times\!\P^{n_p-1},(d_1,\ldots,d_p)\big)$$
of stable $p$-tuples of quotients, which we describe in Section~\ref{SQdfn_sec}
by extending the notion of stable quotients introduced in~\cite{MOP09}.\\

\noindent
The Gromov-Witten analogues of Theorem~\ref{main_thm} and its equivariant version, Theorem~\ref{equiv_thm} in Section~\ref{equivthm_sec},
extend to the so-called \sf{concavex bundles} over products of
projective spaces, i.e.~vector bundles of the form
$$\bigoplus_{k=1}^l\cO_{\P^{n_1-1}\times\ldots\times\P^{n_p-1}}(a_{k;1},\ldots,a_{k;p})
\lra \P^{n_1-1}\!\times\!\ldots\!\times\!\P^{n_p-1},$$
where for each given $k\!=\!1,2,\ldots,l$ either $a_{k;1},\ldots,a_{k;p}\!\in\!\Z^{\ge0}$
or $a_{k;1},\ldots,a_{k;p}\!\in\!\Z^-$.
The stable quotients analogue of these bundles are the sheaves
\BE{gensheaf_e}
\bigoplus_{k=1}^l\cS_1^{*a_{k;1}}\!\otimes\!\ldots\!\otimes\cS_p^{*a_{k;p}}
\lra\cU
\lra \ov{Q}_{0,2}\big(\P^{n_1-1}\!\times\!\ldots\!\times\!\P^{n_p-1},(d_1,\ldots,d_p)
\big)\EE
with the same condition on~$a_{k;i}$,
where $\cS_i\!\lra\!\cU$ is the universal subsheaf corresponding to the $i$-th factor;
see Section~\ref{SQdfn_sec}.
In this case, we compare two power series
\begin{alignat}{1}
\label{genY_e}
Y_{n_1,\ldots,n_p;\a}(x_1,\ldots,x_p,\hb,q_1,\ldots,q_p)
&\in \Q[x_1,\ldots,x_p]\big[\big[\hb^{-1},q_1,\ldots,q_p\big]\big],\\
\label{genZ_e}
Z_{n_1,\ldots,n_p;\a}(x_1,\ldots,x_p,\hb,q_1,\ldots,q_p)&\in
H^*\big(\P^{n_1-1}\!\times\!\ldots\!\times\!\P^{n_p-1}\big)
\big[\big[\hb^{-1},q_1,\ldots,q_p\big]\big],
\end{alignat}
where $x_1,\ldots,x_p\!\in\!H^*(\P^{n_1-1}\!\times\!\ldots\!\times\!\P^{n_p-1})$
are the pullbacks of the hyperplane classes by the projection maps.
The coefficient of $q_1^{d_1}\!\ldots\!q_p^{d_p}$ in~\e_ref{genZ_e}
is defined by the same pushforward as in~\e_ref{Zdfn_e}, with the degree~$d$
of the stable quotients replaced by~$(d_1,\ldots,d_p)$.
The coefficient of $q_1^{d_1}\!\ldots\!q_p^{d_p}$ in~\e_ref{genY_e}
is given~by
$$\frac{\prod\limits_{a_{k;1}\ge0}
\prod\limits_{r=1}^{\sum\limits_{s=1}^p\!a_{k;s}d_s}
\hspace{-.2in}\big(\sum\limits_{s=1}^p\!a_{k;s}x_s\!+\!r\hb\big)
\prod\limits_{a_{k;1}<0}\!\!
\prod\limits_{r=0}^{-\sum\limits_{s=1}^p\!a_{k;s}d_s\,-1}
\hspace{-.3in}\big(\sum\limits_{s=1}^p\!a_{k;s}x_s\!-\!r\hb\big)}
{\prod\limits_{s=1}^p\prod\limits_{r=1}^{d_s}(x_s\!+\!r\hb)^{n_s}}\,.$$
The condition $|\a|\!\le\!n$ should be replaced by the conditions
$$|a_{1;s}|\!+\!\ldots\!+\!|a_{l;s}| \le n_s\qquad\forall~s=1,\ldots,p.$$
Our proof of Theorem~\ref{equiv_thm} (and thus of Theorem~\ref{main_thm}) extends
directly to this situation;
we will comment on the necessary modifications in each step of the proof.\\

\noindent
Mirror formulas for the two-point versions of \e_ref{Zdfn_e} and \e_ref{cZdfn_e},
i.e.~with $\ev_1$ and $(\hb\!-\!\psi_1)$ replaced by $\ev_1\!\times\!\ev_2$
and $(\hb_1\!-\!\psi_1)(\hb_2\!-\!\psi_2)$, as well as their generalizations
to products of projective spaces, can now be readily obtained using
the approaches of \cite{bcov0,Po} in Gromov-Witten theory; see~\cite{CZ2}.
They are related to the corresponding formulas in Gromov-Witten theory in
the same ways as the one-point formulas; see the paragraph following Theorem~\ref{main_thm}.
Similarly to developments in Gromov-Witten theory,
these two-point genus~0 formulas are one of the key steps in computing twisted genus~1
stable quotients invariants in~\cite{CZ3}.\\

\noindent
A notable feature of the mirror formula of Theorem~\ref{main_thm} and
its two-point analogue is that they are invariant under replacing
$(n,(a_1,\ldots,a_k))$ by $(n\!+\!1,(a_1,\ldots,a_k,1))$;
their extensions to products of projective spaces
have a similar feature.\footnote{This replacement does not change the total space
of the vector bundle~\e_ref{VBtot_e}}
This is consistent with \cite[Proposition~6.4.1]{CKM}.\\

\noindent
We would like to thank the referee for the detailed comments and suggestions
that helped improve the exposition,
R.~Pandharipande for bringing moduli spaces of stable
quotients and the problems addressed in this paper to our attention,
as well as for helpful comments,
A.~Popa for suggesting precise references for mirror theorems in
Gromov-Witten theory and pointing out typos in the original version of this paper,
and M.~Alim, I.~Ciocan-Fontanine,
M.~Gross, J.~Koll\'ar, R.~Plesser, and E.~Witten for useful discussions.
The second author would also like to thank the School of Mathematics at IAS
for its hospitality during the period when the results in this paper were obtained
and the paper itself was completed.

\section{Moduli spaces of stable quotients}
\label{SQdfn_sec}

\noindent
We begin this section by reviewing the notion of stable quotients for
products of projective spaces.
Propositions~\ref{prodmod_prp} and~\ref{prodsmooth_prp}
describing moduli spaces of such objects
are a special case of  \cite[Theorems 3.2.1,~4.0.1]{CK}
and precisely the statement of \cite[Example 7.2.6]{CK}, respectively.
We include proofs of these statements, extending~\cite{MOP09}
from the case of projective spaces, for the sake of completeness,
since \cite{CK} treats the general toric case and is thus more involved.
We then introduce related moduli spaces of weighted curves.
We conclude this section with a closed formula for twisted Hurwitz numbers arising
from integrals over these moduli spaces of curves; see Theorem~\ref{main0_thm}.\\

\noindent
By a \sf{nodal genus~$g$ curve}, we will mean a reduced connected scheme $\cC$ over $\C$
of pure dimension~1 with at worst nodal singularities and
$h^1(\cC,\cO_{\cC})\!=\!g$.
Let $\cC^*\!\subset\!\cC$ denote the nonsingular locus of such a curve.
A  \sf{quasi-stable genus~$g$ $m$-marked curve} is a tuple $(\cC,y_1,\ldots,y_m)$
consisting of a nodal genus~$g$ curve and distinct points $y_i\!\in\!\cC^*$.
A \sf{(corank~1) quasi-stable quotient of the trivial rank~$n$ sheaf} on such a curve
is a rank~1 subsheaf $S\!\subset\!\C^n\!\otimes\!\cO_{\cC}$ such that
the corresponding quotient sheaf~$Q$, given~by
$$0\lra S\lra \C^n\!\otimes\!\cO_{\cC}\lra Q\lra0\,,$$
is locally free on $(\cC\!-\!\cC^*)\!\cup\!\{y_1,\ldots,y_m\}$,
i.e.~at the nodes and markings of~C.
A tuple $(S_1,\ldots,S_p)$ of quasi-stable quotients on $(\cC,y_1,\ldots,y_m)$ is \sf{stable}
if the $\Q$-line bundle
$$\om_{\cC}(y_1\!+\!\ldots\!+\!y_m)\otimes
\big(S_1^*\otimes\ldots\otimes S_p^*\big)^{\ep} \lra \cC$$
is ample for all $\ep\!\in\!\Q^+$;
this implies that $2g\!-\!2\!+\!m\!\ge\!0$.
An \sf{isomorphism}
$$\phi\!:(\cC,y_1,\ldots,y_m,S_1,\ldots,S_p)\lra
(\cC',y_1',\ldots,y_m',S_1',\ldots,S_p')$$
between tuples of quasi-stable quotients is an isomorphism $\phi\!:\cC\!\lra\!\cC'$
such that
$$\phi(y_i)=y_i'~~~\forall~i=1,\ldots,m, \qquad
\phi^*S_j'=S_j\subset\C^{n_j}\!\otimes\!\cO_{\cC}~~~\forall~j=1,\ldots,p.$$
The automorphism group of any stable tuple of quotients is finite.

\begin{prp}\label{prodmod_prp}
If $g,m,d_1,\ldots,d_p\!\in\!\Z^{\ge0}$ and
$n_1,\ldots,n_p\!\in\!\Z^+$, the moduli space
\BE{Qgm_e}
\ov{Q}_{g,m}\big(\P^{n_1-1}\!\times\!\ldots\!\times\!\P^{n_p-1},
(d_1,\ldots,d_p)\big)\EE
parameterizing the stable $p$-tuples of quotients
\BE{SQtuple_e}\big(\cC,y_1,\ldots,y_m,S_1,\ldots,S_p\big),\EE
with $h^1(\cC,\cO_{\cC})\!=\!g$, $S_i\!\subset\!\C^{n_i}\!\otimes\!\cO_{\cC}$,
 and $\deg(S_i)\!=\!-d_i$, is a separated and proper
Deligne-Mumford stack of finite type over~$\C$
and carries a canonical two-term obstruction theory.
\end{prp}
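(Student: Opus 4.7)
The plan is to mirror the construction in \cite{MOP09} for a single quotient, exploiting the fact that the conditions defining stability and the obstruction theory behave additively with respect to the index $i=1,\ldots,p$.

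First, I would work over the Artin stack $\mathfrak{M}_{g,m}$ of prestable $m$-pointed genus $g$ curves, which is smooth of finite type locally. Let $\pi\!:\cU\!\to\!\mathfrak{M}_{g,m}$ denote the universal curve. For each $i$, form the relative Quot scheme
$$\mathfrak{Q}_i=\textnormal{Quot}_{\cU/\mathfrak{M}_{g,m}}\big(\C^{n_i}\!\otimes\!\cO_\cU,\,r_i,\,d_i\big),$$
which is relatively representable and proper over $\mathfrak{M}_{g,m}$ by Grothendieck. The fibered product
$$\mathfrak{Q}=\mathfrak{Q}_1\times_{\mathfrak{M}_{g,m}}\!\!\ldots\times_{\mathfrak{M}_{g,m}}\!\mathfrak{Q}_p$$
parametrizes all $p$-tuples of rank $r_i$, degree $-d_i$ subsheaves on a prestable curve, with no stability or torsion condition.

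Next I would cut out the open substack defined by the two quasi-stability/stability conditions. The condition that the torsion of each quotient $Q_i$ is supported on $\cC^*\!-\!\{y_1,\ldots,y_m\}$ (equivalently, that $S_i$ is locally free at nodes and marked points) is open by the standard semicontinuity argument used in \cite[Section 2]{MOP09}; this requires essentially no modification for a tuple since it is imposed factor-by-factor. The stability condition amounts to ampleness of the $\Q$-line bundle $\omega_\cC(y_1\!+\!\ldots\!+\!y_m)\otimes(\La^{\top}S_1^*)^{\ep}\otimes\ldots\otimes(\La^{\top}S_p^*)^{\ep}$ for all small $\ep\!>\!0$, which is an open condition on $\mathfrak{Q}$ and forces $2g\!-\!2\!+\!m\!\ge\!0$. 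Call the resulting open substack $\ov Q$. Boundedness, hence finite type, follows since the ampleness forces a bound on the number and genera of components of $\cC$ (rational tails and bridges are controlled by the degrees $d_i$ via the determinants $\La^{\top}S_i^*$), and each $\mathfrak{Q}_i$ is of finite type over $\mathfrak{M}_{g,m}$ in bounded genus.

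The Deligne-Mumford property follows from finiteness of automorphisms of each stable $p$-tuple: any infinitesimal automorphism of $\cC$ fixing the marked points and the subsheaves $S_i$ must preserve the ample $\Q$-line bundle above and hence vanish, exactly as in \cite[Lemma 4.6]{MOP09}. Separatedness and properness I would verify via the valuative criterion: given a family over the generic point of a DVR, extend each subsheaf $S_i$ uniquely using properness of the individual Quot schemes, then stabilize the total family by contracting unstable components, mimicking the contraction argument of \cite[Section 6]{MOP09} with the determinant $\La^{\top}S_1^*\otimes\ldots\otimes\La^{\top}S_p^*$ playing the role of the single determinant. For the obstruction theory: relative to $\mathfrak{M}_{g,m}$ the canonical two-term complex on $\mathfrak{Q}_i$ is $R\pi_*\cHom(S_i,Q_i)^{\vee}$, concentrated in degrees $[-1,0]$ since $\cC$ has dimension one. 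Summing these over $i$ gives a two-term perfect relative obstruction theory on $\ov Q$ over $\mathfrak{M}_{g,m}$, and composing with the smoothness of $\mathfrak{M}_{g,m}$ yields the canonical absolute two-term obstruction theory.

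The main obstacle I expect is the properness argument: one must check that after extending the $S_i$ across a central fiber one can still stabilize without spoiling any of the $S_i$, and that this stabilization is unique. This is handled in \cite{MOP09} via a careful analysis of destabilizing rational components, and the same analysis applies here once one replaces the single determinant by the product $\bigotimes_{i=1}^p\La^{\top}S_i^*$; the only subtlety is to check that a component destabilized by the multi-determinant can indeed be contracted compatibly with all $p$ subsheaves simultaneously, which follows since contraction of a rational tail carrying trivial $S_i$-degrees for each $i$ is unobstructed.
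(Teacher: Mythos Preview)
Your proposal is correct, and for separatedness, properness, and the obstruction theory it matches the paper essentially step for step (the paper simply cites \cite[Sections~6.2--6.3]{MOP09} where you spell out the logic, and its part~IV is exactly your relative obstruction theory $\bigoplus_i R\pi_*\cHom(\cS_i,Q_i)$ over the Artin stack of curves).

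The one genuine difference is in the construction of the moduli space itself. You work directly over the Artin stack $\mathfrak{M}_{g,m}$ of prestable curves, form the fiber product of relative Quot schemes, and cut out the open stability locus. The paper instead rigidifies: it fixes $\ep=1/(d\!+\!1)$ with $d=d_1+\ldots+d_p$, observes that $\cL_\ep^{5(d+1)}$ is very ample with no higher cohomology, embeds the curve in a fixed $\P(V)$, and then takes the fiber product of Quot schemes over the appropriate locus in a Hilbert scheme, finally forming the stack quotient by $\PGL(V)$. Your route is more economical and avoids the auxiliary projective embedding, but it leans on boundedness over $\mathfrak{M}_{g,m}$ (which is only locally of finite type) as a separate step; the paper's rigidification handles finite type automatically, since the Hilbert scheme is projective, at the cost of the extra $\PGL(V)$ bookkeeping. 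Both are standard and neither buys anything the other cannot, but it is worth knowing that the paper follows the classical GIT-flavored construction from \cite{MOP09} rather than the purely stack-theoretic one you outline.
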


\begin{proof}
The construction of $\ov{Q}_{g,m}(\Pn,d)$ in~\cite{MOP09}
carries through with minor changes.
We sketch the modification here.\\

\noindent
I. {\it Construction of the moduli space}.
Let $g,m,d_1,...,d_p$ satisfy
$$2g\!-\!2\!+\!m\!+\!\ep(d_1\!+\!...\!+\!d_p)>0 \qquad\forall~\ep\!>\!0.$$
Let $d=d_1\!+\!\ldots\!+\!d_p$.
Fix a stable $p$-tuple of  quotients $(\cC,y_1,\ldots,y_m,S_1,\ldots,S_p)$, where
\BE{SQses_e}0\lra S_i\lra \C^{n_i}\!\otimes\!\O_{\cC}\lra Q_i \lra 0\,.\EE
By assumption, the line bundle
$$\cL_{\ep} = \om_{\cC}(y_1\!+\!\ldots\!+\!y_m)
 \otimes\big(S_1^*\!\otimes\!\ldots\!\otimes\!S_p^* \big)^{\ep}$$
is ample for all $\ep\!>\!0$.
Fix $\ep=1/(d\!+\!1)$ and let $f=5(d\!+\!1)$.
By \cite[Lemma~5]{MOP09}, the line bundle $\cL_{\ep}^f$ is very ample
and has no higher cohomology.
Therefore,
$$h^0(\cC,\cL_{\ep}^f) = 1 - g + 5(d\!+\!1)(2g\!-\!2\!+\!m)+5d$$
is independent of the choice of the stable $p$-tuple of  quotients.
Let
$$V =H^0(\cC,\cL_{\ep}^f)^*\,.$$
The line bundle $\cL_{\ep}^f$ induces an embedding $\io\!:\cC\hookrightarrow\P(V)$.
Let \sf{Hilb} denote the Hilbert scheme of curves in $\P(V)$ of genus $g$ and degree
$$5(d\!+\!1)(2g\!-\!2\!+\!m)+5d=\deg \cL_{\ep}^f\,.$$
Each stable quotient gives rise to a point in
$$\cH = \textsf{Hilb} \times \P(V)^m\,,$$
where the last factors record the locations of the markings $y_1,\ldots,y_m$.\\

\noindent
Points in $\cH$ correspond to tuples $(\cC,y_1,\ldots,y_m)$.
Denote by $\cH'\!\subset\!\cH$ the quasi-projective subscheme consisting of
the tuples such~that
\begin{enumerate}[label=(\roman*),leftmargin=*]
\item the points $y_1,\ldots,y_m$ are contained in $\cC$,
\item the curve $(\cC,y_1,\ldots,y_m)$ is quasi-stable.
\setcounter{saveenumi}{\arabic{enumi}}
\end{enumerate}
Let $\pi\!:\cU'\!\lra\!\cH'$ be the universal curve over~$\cH'$.
For $i\!=\!1,\ldots,p$, let
$$\sf{Quot}(n_i,d_i)\lra\cH',$$
be the $\pi$-relative Quot scheme parameterizing rank $n_i\!-\!1$ degree $d_i$ quotients
$$0 \lra S_i \lra \C^{n_i}\!\otimes\!\O_{\cC} \lra Q_i \lra 0$$
on the fibers of~$\pi$.
Denote by $\cQ$ be the fiber product
$$\cQ = \sf{Quot}(n_1,d_1)\times_{\cH'}\ldots\times_{\cH'}
 \sf{Quot}(n_p,d_p)\lra \cH'$$
and $\cQ'\!\subset\!\cQ$ the  locally closed subscheme consisting of the tuples such that
\begin{enumerate}[label=(\roman*),leftmargin=*]
\setcounter{enumi}{\arabic{saveenumi}}
\item $Q_i$ is locally free at the nodes and at the marked points of $\cC$,
\item the restriction of $\O_{\P(V)}(1)$ to $\cC$ agrees with the line bundle
$$\big(\om_{\cC}(y_1\!+\!\ldots\!+\!y_m)\big)^{5(d+1)}\otimes
\big(S_1^*\!\otimes\!\ldots\!\otimes\!S_p^*\big)^5.$$
\end{enumerate}
The action of $\PGL(V)$ on $\cH$ induces actions on $\cH'$ and $\cQ'$.
A $\PGL(V)$-orbit in $\cQ'$ corresponds to a stable quotient up to isomorphism.
By stability, each orbit has finite stabilizers.
The moduli space~\e_ref{Qgm_e} is the stack quotient $[\cQ'/\PGL(V)]$.\\

\noindent
II. {\it Separateness}.
We prove that the moduli stack~\e_ref{Qgm_e} is separated by the valuative criterion.
Let $(\De,0)$ be a nonsingular pointed curve and $\De^0=\De\!-\!\{0\}$.
Take two flat families of quasi-stable pointed curves
$$\cX^j\lra\De, \qquad y^j_1,\ldots,y^j_m\!: \De\lra \cX^j,$$
and two flat families of stable quotients
$$0\lra S^j_i \lra \C^{n_i}\!\otimes\!\cO_{\cX_i}\lra Q^j_i \lra0,$$
with $j\!=\!1,2$ and $i\!=\!1,\ldots,p$.
Assume the two families are isomorphic away from the central fiber.
By \cite[Section 6.2]{MOP09}, an isomorphism between these two families over $\De\!-\!0$
extends to the families of curves $\cX^j\!\lra\!\De$ in a manner preserving the sections and
hence extends to each pair of families of stable quotients.\\

\noindent
III. {\it Properness.}
We prove the moduli stack~\e_ref{Qgm_e} is proper, again by the valuative criterion.
Let
$$\pi^0\!: \cX^0\lra\De^0, \qquad y_1,\ldots,y_m\!: \De^0\lra\cX^0$$
carry a flat family of stable $p$-tuples of  quotients
$$0\lra S_i \lra \C^{n_i}\!\otimes\!\O_{\cX^0}\lra Q_i \lra 0.$$
By  \cite[Section~6.3]{MOP09}, each stable quotient individually extends,
possibly after base-change, and hence the $p$-tuple extends.
In particular, the blowup procedure in \cite[Section~6.3]{MOP09} yielding
the sheaf~$\wt{S}$ in \cite[(18)]{MOP09} can be applied to each sheaf~$S_i$
separately to yield sheaves $\ti{S}_i$ over a flat family $\wt\cX\!\lra\!\De$
so that the corresponding quotients~$\ti{Q}_i$ are
locally free at the nodes and at the marked points of the central fiber.
After a base change and altering each quotient sheaf at finitely points,
we obtain a flat family of quasi-stable quotients $Q_i''$
over a flat family as in \cite[(19)]{MOP09}.
The final blowdown step of \cite[Section~6.3]{MOP09} is applied
with the unstable genus~0 curves~$P$ such that $S_i''|_P\!=\!\cO_P$
for all $i\!=\!1,\ldots,p$ and the line bundle~$\fL$ obtained from
the one in~\cite{MOP09} by replacing $\La^r(S'')$ with $S_1''\!\otimes\!\ldots\!\otimes\!S_p''$.
The resulting $p$-tuple of push-forward sheaves over the central fiber
is then stable.\\

\noindent
IV. {\it Obstruction Theory.}
We follow the argument in \cite[Section 3.2]{MOP09}.
Let $\phi\!:\cC\!\lra\!\cM_{g,m}$ be the universal curve over
the Artin stack of pointed curves and
$\bfQ(n,d)\!\lra\!\cM_{g,m}$ be the relative Quot scheme of
rank $n\!-\!1$ degree $d$ quotients of $\C^n\!\otimes\!\O_{\cC}$ along the fibers of~$\phi$.
Denote~by
$$\bfQ'(n,d)\subset \bfQ(n,d)$$
the locus consisting of locally free subsheaves
and by
$$\nu\!:\bfQ'\equiv \bfQ'(n_1,d_1)\times_{\cM_{g,m}}\ldots\times_{\cM_{g,m}}
\bfQ'(n_p,d_p) \times_{\cM_{g,m}}\cC\lra\cM_{g,m}$$
the fiber product.
The universal sequence of sheaves
$$0 \lra \cS \lra \C^n\!\otimes\!\O_{\cC} \lra Q \lra 0$$
over $\bfQ'(n,d)\!\times_{\cM_{g,m}}\!\cC$ gives rise to a universal sequence
$$ 0\lra \bigoplus_{i=1}^p\cS_i\lra
\bigoplus_{i=1}^p (\C^{n_i}\!\otimes\!\O_{\cC}) \lra \bigoplus_{i=1}^p Q_i \lra 0$$
over $\bfQ'\!\times_{\cM_{g,m}}\!\cC$.
Let $\pi\!:\bfQ'\!\times_{\cM_{g,m}}\!\cC\lra\bfQ'$ be the projection map.
By \cite[Proposition~4.4.4]{Sernesi} with
$$\K=\bigoplus_{i=1}^p\cS_i\,,\qquad
 \cH=\bigoplus_{i=1}^p\C^{n_i}\!\otimes\!\O_{\cC}\,,
  \quad\hbox{and}\quad  \F=\bigoplus_{i=1}^p Q_i\,,$$
the relative deformation-obstruction theory of $\nu\!:\bfQ'\!\lra\!\cM_{g,m}$
is given~by
$$RHom_{\pi}(\cS_1,Q_1)\oplus\ldots \oplus RHom_{\pi}(\cS_p,Q_p)
=\bigoplus_{i=1}^p R\pi_*Hom(\cS_i,Q_i);$$
the equality above holds because each $\cS_i$ is a locally free sheaf.
By \cite[Section~2]{MO}, $R\pi_*Hom(\cS_i,Q_i)$ can be resolved by a two-step complex
of vector bundles.
Thus,
$$\nu^A\!: \ov{Q}_{g,m}\big(\P^{n_1-1}\!\times\!\ldots\!\times\!\P^{n_p-1},
(d_1,\ldots,d_p)\big) \lra \cM_{g,m}$$
admits a two-term relative deformation-obstruction theory.
Along with the smoothness of~$\cM_{g,m}$,
this induces  an absolute two-term deformation-obstruction theory of
the moduli space~\e_ref{Qgm_e}; see \cite[Appendix~B]{GP}.
\end{proof}

\begin{prp}[{\cite[Example 7.2.6]{CK}}]\label{prodsmooth_prp}
If $g\!=\!0$ or $(g,m)\!=\!(1,0)$ and $d_1,\ldots,d_p,n_1,\ldots,n_p\!\ge\!1$,
the moduli space
\BE{Qgm_e2}
\ov{Q}_{g,m}\big(\P^{n_1-1}\!\times\!\ldots\!\times\!\P^{n_p-1},
(d_1,\ldots,d_p)\big)\EE
is a nonsingular irreducible Deligne-Mumford stack of the expected dimension.
\end{prp}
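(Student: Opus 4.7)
The plan is to follow the proof of \cite[Theorem~2]{MOP09}, exploiting the fact that the relative deformation-obstruction theory
$$\bigoplus_{i=1}^p R\pi_*\Hom(\cS_i,Q_i)$$
of $\nu^A\!:\ov{Q}_{g,m}(\ldots)\!\lra\!\cM_{g,m}$ established in the proof of Proposition~\ref{prodmod_prp} decomposes as a direct sum indexed by~$i$. Since $\cM_{g,m}$ is a smooth Artin stack of dimension $3g\!-\!3\!+\!m$ (as the stack of prestable curves) and the absolute and relative obstruction theories of $\nu^A$ fit into an exact triangle, smoothness of the moduli space~\e_ref{Qgm_e2} reduces to showing that each summand $R^1\pi_*\Hom(\cS_i,Q_i)$ vanishes pointwise, i.e., at every stable $p$-tuple $(\cC,y_1,\ldots,y_m,S_1,\ldots,S_p)$ and every~$i$,
$$\Ext^1(\cS_i,Q_i)\cong H^1\big(\cC,\cS_i^*\!\otimes\!Q_i\big)=0;$$
the isomorphism uses local freeness of the rank-one subsheaf~$\cS_i$.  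This is the main technical step.

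To establish the vanishing, I would twist the defining sequence~\e_ref{SQses_e} by the flat sheaf $\cS_i^*$ to obtain
$$0\lra \cO_\cC\lra (\cS_i^*)^{\oplus n_i}\lra \cS_i^*\!\otimes\!Q_i\lra 0,$$
and reduce via the long exact cohomology sequence (together with $H^2(\cC,-)\!=\!0$ for dimension reasons) to the vanishings of $H^1(\cC,\cO_\cC)$ and $H^1(\cC,\cS_i^*)$.  For $g\!=\!0$, the curve~$\cC$ is a tree of $\P^1$s, so $H^1(\cO_\cC)\!=\!0$; furthermore, since $\cS_i$ embeds into $\cO_\cC^{n_i}$, its restriction to each irreducible component has nonpositive degree, whence $\cS_i^*$ has nonnegative degree on every component of the tree, which gives $H^1(\cS_i^*)\!=\!0$.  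For $(g,m)\!=\!(1,0)$, the analogous vanishings are established in~\cite[Section~4]{MOP09} for a single projective space factor, and I would invoke them summand by summand.

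With smoothness in hand, Riemann--Roch gives the dimension of~\e_ref{Qgm_e2} at any closed point as
$$3g\!-\!3\!+\!m+\sum_{i=1}^p\chi\big(\cC,\cS_i^*\!\otimes\!Q_i\big)
=3g\!-\!3\!+\!m+\sum_{i=1}^p\big[d_in_i+(n_i\!-\!1)(1\!-\!g)\big],$$
matching the expected dimension.  For irreducibility, I would identify the open substack $\U$ of~\e_ref{Qgm_e2} parametrizing tuples with smooth domain $\cC$ and all $Q_i$ locally free; such tuples correspond to genuine morphisms $\cC\!\lra\!\P^{n_1-1}\!\times\!\ldots\!\times\!\P^{n_p-1}$ of multidegree $(d_1,\ldots,d_p)$.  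After fixing the smooth domain with its markings, this is the product over $i$ of the (irreducible) spaces of degree-$d_i$ morphisms $\cC\!\lra\!\P^{n_i-1}$; fibering over the irreducible smooth locus of $\cM_{g,m}$ shows that $\U$ is irreducible.  Density of~$\U$ follows by smoothing nodes and deforming torsion of~$Q_i$ to generic points, as in \cite[Section~4]{MOP09} for the $g\!=\!0$ case; the $(g,m)\!=\!(1,0)$ case combines \cite[Theorem~4.1]{Co11} with the product structure.  Smoothness then propagates irreducibility from~$\U$ to the entire moduli space.
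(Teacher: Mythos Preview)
Your smoothness argument follows the same route as the paper's: both reduce, via the short exact sequence you wrote, to the vanishing of $H^1(\cC,S_i^*)$ for each~$i$. One minor imprecision: you phrase the reduction as needing both $H^1(\cO_\cC)=0$ and $H^1(S_i^*)=0$, but only the latter is required (the long exact sequence already gives a surjection $H^1(S_i^*)^{n_i}\twoheadrightarrow H^1(S_i^*\!\otimes\!Q_i)$); this matters since $H^1(\cO_\cC)\neq0$ when $g=1$. For $(g,m)=(1,0)$ the paper argues directly rather than deferring to~\cite{MOP09}: $\cC$ is either smooth of genus~$1$ or a cycle of rational curves, and a line bundle of positive total degree with nonnegative degree on each component is nonspecial.

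Your irreducibility argument, however, differs from the paper's and has a genuine gap: the open locus~$\U$ you describe can be empty. If some $n_i=1$ (allowed by the hypotheses) and $d_i\ge1$, the quotient $Q_i$ is pure torsion of length~$d_i$, never locally free. Even for $n_i\ge2$, if $(g,m)=(1,0)$ and some $d_i=1$, a degree-$1$ line bundle on a smooth genus-$1$ curve has $h^0=1$ and hence a basepoint, so there is no locally free quotient over a smooth domain. In such cases your density argument cannot start. The paper instead exhibits a dense open subset of the moduli space as an open subset of an irreducible bundle over $\ov\cM_{g,m+d_1+\ldots+d_p}$: roughly, on the locus where the torsion of each~$Q_i$ is simple and supported at distinct points, one records the curve with these points as additional markings, and the remaining data (the inclusion $S_i\hookrightarrow\cO^{n_i}$ with prescribed torsion) varies in an irreducible family. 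This works uniformly across all admissible~$n_i$ and~$d_i$.
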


\begin{proof} By part IV in the proof of Proposition~\ref{prodmod_prp},
the moduli space~\e_ref{Qgm_e2} is smooth at a
point $(\cC,y_1,\ldots,y_m,S_1,\ldots,S_p)$ if
\BE{Ext1van_e}\bigoplus_{i=1}^p \Ext^1(S_i,Q_i) = 0.\EE
Since each $S_i$ is locally free, this is the case if
\BE{H1van_e} H^1(S_i^*\!\otimes\!Q_i)=0\EE
for each $i\!=\!1,\ldots,p$.
From the cohomology long exact sequence for the short exact sequence
$$ 0 \lra \O_{\cC} \lra \C^{n_i}\!\otimes\!S_i^* \lra Q_i\!\otimes\!S_i^* \lra 0,$$
we see that \e_ref{H1van_e} holds if $H^1(S_i^*)\!=\!0$.\\

\noindent
If $g\!=\!0$, $\cC$ is a rational curve and
thus there are no special line bundles on~$\cC$ that have a nonnegative degree on
every component of~$\cC$.
If $(g,m)\!=\!(1,0)$, then  $\cC$ is either a nonsingular curve of genus~1 or
a cycle of rational curves;
thus, there are no special line bundles of positive degree on~$\cC$
that have nonnegative degree on each component of~$\cC$.
In either case, we conclude that $H^1(S_i^*)\!=\!0$ for each $i\!=\!1,\ldots,p$
and so~\e_ref{Ext1van_e} holds.
Thus, the moduli space~\e_ref{Qgm_e2} is smooth at every point and
hence is a nonsingular Deligne-Mumford stack of the expected dimension.\\

\noindent
It remains to show that it is also irreducible.
Let $U$ denote the open locus in the moduli space where the domain curve is smooth.  
In the $g\!=\!0$ case, $U$ is dominated by the product of projective spaces
$(\P^1)^m\!\times\!\prod_i\Proj(H^0(\cO(d_i))^{n_i})$.  
In the $(g,m)\!=\!(1,0)$ case, $U$ is dominated by the bundle 
$\prod_i\Proj(H^0(\cO(dp))^{n_i})$ over $\ov\cM_{1,1}$,
where $p$ is the marked point. 
Thus, $U$ is irreducible in both cases.
Since the moduli space~\e_ref{Qgm_e2} is unobstructed, $U$ is dense in~\e_ref{Qgm_e2}
and thus the latter is also irreducible.
\end{proof}

%

%

\noindent
A stable tuple as in \e_ref{SQtuple_e} such that each quotient sheaf
$Q_i=\C^{n_i}\!\otimes\!\cO_{\cC}/S_i$ is locally free corresponds
to a stable morphism
$$\cC\lra
\P^{n_1-1}\!\times\!\ldots\!\times\!\P^{n_p-1}$$
with marked points $y_1,\ldots,y_m$.
As in the $p\!=\!1$ case considered in \cite[Section~3.1]{MOP09},
there are evaluation morphisms
$$\ev_i\!: \ov{Q}_{g,m}\big(\P^{n_1-1}\!\times\!\ldots\!\times\!\P^{n_p-1},
(d_1,\ldots,d_p)\big)\lra
\P^{n_1-1}\!\times\!\ldots\!\times\!\P^{n_p-1}$$
with  $i\!=\!1,2,\ldots,m$.
There is also a universal curve
$$\pi\!: \cU\lra
\ov{Q}_{g,m}\big(\P^{n_1-1}\!\times\!\ldots\!\times\!\P^{n_p-1},
(d_1,\ldots,d_p)\big)$$
with $m$ sections $\si_1,\ldots,\si_m$  and
universal rank~1 subsheaves $\cS_i\subset \C^{n_i}\!\otimes\!\cO_{\cU}$.\\

\noindent
We will also need a certain moduli space of weighted curves;
this is the stable quotients counterpart of the Deligne-Mumford moduli space of
stable genus~$g$ marked curves in Gromov-Witten theory.
A \sf{$d$-tuple of~flecks} on a  quasi-stable $m$-marked curve $(\cC,y_1,\ldots,y_m)$
is a $d$-tuple $(\hat{y}_1,\ldots,\hat{y}_d)$ of points
of $\cC^*\!-\!\{y_1,\ldots,y_m\}$.
Such a tuple is \sf{stable} if the $\Q$-line bundle
$$\om_{\cC}\big(y_1\!+\!\ldots\!+\!y_m+\ep(\hat{y}_1\!+\!\ldots\!+\!\hat{y}_d)\big)
\lra \cC$$
is ample for all $\ep\!\in\!\Q^+$;
this again implies that $2g\!-\!2\!+\!m\!\ge\!0$.
An \sf{isomorphism}
$$\phi\!:(\cC,y_1,\ldots,y_m,\hat{y}_1,\ldots,\hat{y}_d)
\lra (\cC',y_1',\ldots,y_m',\hat{y}_1',\ldots,\hat{y}_d')$$
between curves with $m$ marked points and $d$ flecks is an isomorphism $\phi\!:\cC\!\lra\!\cC'$
such that
$$\phi(y_i)=y_i'~~~\forall~i=1,\ldots,m, \qquad
\phi(\hat{y}_j)=\hat{y}_j'~~~\forall~j=1,\ldots,d.$$
The automorphism group of any stable curve with $m$ marked points and $d$ flecks is finite.

\begin{prp}\label{torsionmod_prp}
If $g,m,d\!\in\!\Z^{\ge0}$, the moduli space $\ov\cM_{g,m|d}$
parameterizing the stable genus~$g$ curves with $m$ marked points and $d$ flecks,
\BE{torsiontuple_e}\big(\cC,y_1,\ldots,y_m,\hat{y}_1,\ldots,\hat{y}_d\big),\EE
is a nonsingular, irreducible, proper Deligne-Mumford stack.
\end{prp}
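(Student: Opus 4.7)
The plan is to observe that $\ov\cM_{g,m|d}$ is Hassett's moduli space of weighted pointed stable curves for the weight vector $\vec A=(1,\ldots,1,1/d,\ldots,1/d)$ (with $m$ entries equal to $1$ and $d$ entries equal to $1/d$), from which all four asserted properties follow at once. The equivalence of stability conditions is a brief check: ampleness of $\om_\cC(\sum y_i+\ep\sum\hat y_j)$ for all $\ep\!>\!0$ is equivalent to ampleness at the single value $\ep\!=\!1/d$, and the coincidence inequalities $1\!+\!1\!>\!1$, $1\!+\!1/d\!>\!1$, $d\cdot 1/d\!=\!1$ match exactly what the definition of a $d$-tuple of flecks allows (marked points cannot collide with each other or with flecks, but flecks may collide freely among themselves).

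For a self-contained argument in the style of this paper, I would instead adapt the GIT construction in the proof of Proposition~\ref{prodmod_prp}. Fix $\ep=1/(d\!+\!1)$ and $f=5(d\!+\!1)$; on each stable tuple, the line bundle
\[\cL_\ep=\om_\cC\big(y_1+\ldots+y_m+\ep(\hat y_1+\ldots+\hat y_d)\big)\]
has $\cL_\ep^f$ very ample with no higher cohomology and $h^0$ independent of the tuple. The moduli space is then the $\PGL(V)$-quotient of a locally closed subscheme $\cH'\!\subset\!\textsf{Hilb}\!\times\!\P(V)^{m+d}$ cut out by the conditions that (i) the $m\!+\!d$ sections lie on the smooth locus, (ii) the first $m$ are pairwise distinct and disjoint from the last $d$, and (iii) the tuple is quasi-stable; no constraints are imposed among the $\hat y_j$ themselves. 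Ampleness of $\cL_\ep$ forces finite stabilizers, yielding a Deligne-Mumford stack.

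Separateness and properness follow from the valuative criterion exactly as in parts II and III of the proof of Proposition~\ref{prodmod_prp}: the underlying $m$-pointed genus-$g$ curve extends over the disc by standard stable reduction, and each fleck section extends uniquely after possibly a base change and the insertion of a rational tail at the central fiber to keep the limit on the smooth locus. For smoothness and irreducibility, the forgetful-and-stabilize morphism $\ov\cM_{g,m|d}\to\ov\cM_{g,m}$ (with the evident modification when $2g\!-\!2\!+\!m\!\le\!0$) is smooth of relative dimension $d$: its fiber over $(\cC,y_1,\ldots,y_m)$ is the $d$-fold product $(\cC^*\!\setminus\!\{y_1,\ldots,y_m\})^d$, since coincidences among flecks are permitted. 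Combined with smoothness of $\ov\cM_{g,m}$, this yields smoothness of the expected dimension $3g\!-\!3\!+\!m\!+\!d$. Irreducibility follows because the open substack where $\cC$ is smooth and all flecks are pairwise distinct is an open subset of the irreducible $\cM_{g,m+d}$, and it is dense in $\ov\cM_{g,m|d}$ by a standard smoothing argument.

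The main technical hurdle is the fleck-extension step in the valuative criterion: ensuring uniqueness of the limit when a fleck approaches a node or a marked point requires carefully inserting a rational bubble, as in \cite[Section~6.3]{MOP09}, and checking that the resulting quasi-stable curve is the unique stable extension. Everything else is a routine simplification of Propositions~\ref{prodmod_prp} and~\ref{prodsmooth_prp}, reflecting the fact that no subsheaf data $S_i$ is present and hence no $\Ext^1$ obstruction needs to be controlled.
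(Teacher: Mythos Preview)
Your opening paragraph is exactly the paper's proof: the authors simply observe that $\ov\cM_{g,m|d}$ is Hassett's moduli space of weighted pointed stable curves with $m$ points of weight~$1$ and $d$ points of weight~$1/d$, and cite \cite[Theorem~2.1]{Hassett}. Your verification that the two stability notions coincide (ampleness for all $\ep>0$ versus ampleness at $\ep=1/d$, together with the coincidence inequalities) is correct and slightly more explicit than what the paper writes.

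The self-contained GIT sketch you add afterwards is not in the paper and is unnecessary given the Hassett reference. It also contains a small gap: the fiber of the forgetful morphism $\ov\cM_{g,m|d}\to\ov\cM_{g,m}$ over a \emph{nodal} curve $(\cC,y_1,\ldots,y_m)$ is not $(\cC^*\!\setminus\!\{y_i\})^d$. A rational bridge carrying at least one fleck can be inserted at any node of~$\cC$ (such a component has $\om$-degree $-2+2+k\ep=k\ep>0$ for all $\ep>0$), so the actual fiber compactifies $(\cC^*\!\setminus\!\{y_i\})^d$ by allowing flecks to migrate onto these bubbles. The morphism is still smooth of relative dimension~$d$, but establishing that requires a different argument (e.g.\ identifying $\ov\cM_{g,m|d}$ with an iterated universal curve). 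Since Hassett's theorem already delivers smoothness, irreducibility, and properness, you can simply drop the second half.
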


\begin{proof}
The moduli space $\ov\cM_{g,m|d}$ is the moduli space of weighted pointed stable curves,
defined in \cite[Section~2]{Hassett}, with $m$~points of weight~1 and
$d$~points of weight $1/d$ (if $d\!>\!0$).
Thus, this proposition is a special case of \cite[Theorem~2.1]{Hassett}.
\end{proof}

\noindent
Any tuple as in~\e_ref{torsiontuple_e} induces a quasi-stable quotient
$$\cO_{\cC}\big(-\hat{y}_1-\ldots-\hat{y}_d\big)\subset
\cO_{\cC} \equiv \C^1\!\otimes\!\cO_{\cC}\,.$$
For any ordered partition $d\!=\!d_1\!+\!\ldots\!+\!d_p$ with $d_1,\ldots\!,d_p\!\in\!\Z^{\ge0}$,
this correspondence gives rise to a morphism
$$\ov\cM_{g,m|d}\lra
\ov{Q}_{g,m}\big(\P^0\!\times\!\ldots\!\times\!\P^0,(d_1,\ldots,d_p)\big).$$
In turn, this morphism induces an isomorphism
\BE{curvquot_e}\phi\!:\ov\cM_{g,m|d}\big/\bS_{d_1}\!\times\!\ldots\!\times\!\bS_{d_p}
\stackrel{\sim}{\lra}
\ov{Q}_{g,m}\big(\P^0\!\times\!\ldots\!\times\!\P^0,(d_1,\ldots,d_p)\big),\EE
with the symmetric group $\bS_{d_1}$ acting on $\ov\cM_{g,m|d}$ by permuting
the points $\hat{y}_1,\ldots,\hat{y}_{d_1}$,
$\bS_{d_2}$ acting on $\ov\cM_{g,m|d}$ by permuting
the points $\hat{y}_{d_1+1},\ldots,\hat{y}_{d_1+d_2}$, etc.\\

\noindent
There is again  a universal curve
$$\pi\!: \cU\lra \ov\cM_{g,m|d}$$
with sections $\si_1,\ldots,\si_m$ and $\hat\si_1,\ldots,\hat\si_d$.
Let
\BE{psicurvedfn_e}\psi_i=-\pi_*(\si_i^2),~
 \hat\psi_j=-\pi_*(\hat\si_j^2)  \in H^2\big(\ov\cM_{g,m|d}\big)\EE
be the first Chern classes of the universal cotangent line bundles.

\begin{lmm}[{\cite[Section 4.5]{MOP09}}]\label{M02_lmm}
If $d\!\in\!\Z^+$ and $a_1,a_2,b_1,\ldots,b_d\!\in\!\Z^{\ge0}$, then
\BE{psiint_e}\int_{\ov\cM_{0,2|d}}\!\!\!\!\psi_1^{a_1}\psi_2^{a_2}
\hat\psi_1^{b_1}\!\ldots\!\hat\psi_d^{b_d}
=\binom{d\!-\!1}{a_1,a_2}
\cdot\begin{cases}
1,&\tn{if}~b_1,\ldots,b_d\!=\!0;\\
0,&\tn{otherwise}.
\end{cases}\EE
\end{lmm}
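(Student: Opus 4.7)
The plan is to argue by induction on $d$ via the forgetful morphism $f_d\!:\ov\cM_{0,2|d}\!\to\!\ov\cM_{0,2|d-1}$ that drops the fleck $\hat y_d$ and contracts any resulting destabilized component. This morphism realizes $\ov\cM_{0,2|d}$ as the universal curve over $\ov\cM_{0,2|d-1}$. Because $y_1,y_2$ carry weight~$1$ while the flecks carry arbitrarily small weight, only the collisions $\hat y_d\!=\!y_i$ force bubbling, producing for each $i\!=\!1,2$ a boundary divisor $D_{i,d}\!\subset\!\ov\cM_{0,2|d}$; two flecks may coincide with no new component. Each $D_{i,d}$ is cut out by a bubble carrying three rigid marked objects, so $f_d$ maps $D_{i,d}$ isomorphically onto $\ov\cM_{0,2|d-1}$, and the standard identities
$$\psi_i\cdot D_{i,d}=0,\qquad D_{1,d}\cdot D_{2,d}=0,\qquad
\psi_i^{\ov\cM_{0,2|d}}=f_d^*\psi_i^{\ov\cM_{0,2|d-1}}+D_{i,d}\quad(i=1,2)$$
hold in $H^*(\ov\cM_{0,2|d})$.

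The first step is to show that $\hat\psi_j\!=\!0$ in $H^*(\ov\cM_{0,2|d})$ for every fleck index $j$. Fixing any $k\!\in\!\{1,\ldots,d\}\!\setminus\!\{j\}$ and applying $f_k$, the component of the universal curve carrying $\hat y_j$ is unchanged: fleck-fleck collisions do not bubble, and on the boundary $D_{i,k}$ the contracted bubble is disjoint from $\hat y_j$. Hence $\hat\psi_j^{\ov\cM_{0,2|d}}\!=\!f_k^*\hat\psi_j^{\ov\cM_{0,2|d-1}}$ with no boundary correction; iterating reduces to $\ov\cM_{0,2|1}$, which is a single point, forcing $\hat\psi_j\!=\!0$. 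This immediately implies that \e_ref{psiint_e} vanishes whenever some $b_j\!>\!0$.

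For the remaining case $b_1\!=\!\ldots\!=\!b_d\!=\!0$, combine the above identities. Using $\psi_i\cdot D_{i,d}\!=\!0$ gives $\psi_i^a\!=\!f_d^*\psi_i^a+f_d^*\psi_i^{a-1}\cdot D_{i,d}$ for $a\!\ge\!1$, and with $D_{1,d}\cdot D_{2,d}\!=\!0$ this expands (when $a_1,a_2\!\ge\!1$) to
$$\psi_1^{a_1}\psi_2^{a_2}=f_d^*(\psi_1^{a_1}\psi_2^{a_2})+f_d^*(\psi_1^{a_1-1}\psi_2^{a_2})\cdot D_{1,d}+f_d^*(\psi_1^{a_1}\psi_2^{a_2-1})\cdot D_{2,d};$$
the boundary cases $a_1\!=\!0$ or $a_2\!=\!0$ are analogous. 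Pushing forward by $f_d$ annihilates the pure-pullback term by dimension and sends each $f_d^*\alpha\cdot D_{i,d}$ to $\alpha$ via the projection formula (as $f_d|_{D_{i,d}}$ is an isomorphism), so integration yields the Pascal-type recursion
$$\int_{\ov\cM_{0,2|d}}\!\!\psi_1^{a_1}\psi_2^{a_2}
=\int_{\ov\cM_{0,2|d-1}}\!\!\psi_1^{a_1-1}\psi_2^{a_2}
+\int_{\ov\cM_{0,2|d-1}}\!\!\psi_1^{a_1}\psi_2^{a_2-1}$$
with negative exponents contributing zero. The base case $\ov\cM_{0,2|1}\!=\!\tn{pt}$ then reproduces the multinomial coefficient $\binom{d-1}{a_1,a_2}$.

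The main obstacle is verifying the two weighted-stability $\psi$-comparison statements: that $\hat\psi_j$ pulls back under $f_k$ ($k\!\neq\!j$) with no correction because fleck-fleck collisions do not destabilize, and that the heavy-point formula $\psi_i\!=\!f_d^*\psi_i\!+\!D_{i,d}$ takes exactly the classical shape despite the asymmetric weights. Both are routine adaptations of the marked-point arguments to the Hassett framework of \cite{Hassett}, and can be verified by local analysis of the universal curve near the bubble.
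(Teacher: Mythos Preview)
Your proof is correct and uses the same core machinery as the paper: the forgetful morphism dropping a fleck, the comparison $\psi_i=f^*\psi_i+D_i$ with $D_1\cdot D_2=0$ and $\psi_i\cdot D_i=0$, and the resulting Pascal recursion with base case $\ov\cM_{0,2|1}=\tn{pt}$.

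The one organizational difference is in handling the $\hat\psi_j$ classes. The paper does not prove $\hat\psi_j=0$; instead it observes that by symmetry and dimension one may take $b_d=0$, and then carries the remaining factors $\hat\psi_1^{b_1}\!\ldots\!\hat\psi_{d-1}^{b_{d-1}}$ through the recursion via $\hat\psi_j=f^*\hat\psi_j$, so that the vanishing when some $b_j>0$ falls out of the induction hypothesis. You instead iterate the pullback relation $\hat\psi_j=f_k^*\hat\psi_j$ all the way down to $\ov\cM_{0,2|1}$ to deduce $\hat\psi_j=0$ in $H^*(\ov\cM_{0,2|d})$ outright, which is a slightly stronger statement and cleanly separates the two cases. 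Both arguments rest on the same weighted-stability fact (fleck--fleck collisions do not bubble), so this is a minor reorganization rather than a genuinely different route.
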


\begin{proof}
If $d\!>\!1$, there is a forgetful morphism
$$f\!: \ov\cM_{0,2|d}\lra\ov\cM_{0,2|d-1},$$
dropping the fleck $\hat{y}_d$.
For $i\!=\!1,2$, let $D_i\subset\ov\cM_{0,2|d}$ denote the divisor whose generic element consists
of two components, with one of them containing $y_i$ and $\hat{y}_d$
(and no other marked points).
By \e_ref{psicurvedfn_e},
\BE{psipullback_e}\psi_i=f^*\psi_i+D_i~~~\forall~i=1,2, \qquad
\hat\psi_j=f^*\psi_j~~~\forall~j=1,\ldots,d\!-\!1.\EE
Under the canonical identification of $D_i\approx\ov\cM_{0,2|d-1}\times\ov\cM_{0,2|1}$
with $\ov\cM_{0,2|d-1}$,
\BE{psirest_e}\begin{split}
&D_i|_{D_i}=-\psi_i, \qquad  D_1\cdot D_2=0, \qquad
\psi_i|_{D_i},\hat\psi_d|_{D_i}=0,\\
&\psi_{3-i}|_{D_i}=\psi_{3-i}, \qquad
\hat\psi_j|_{D_i}=\hat\psi_j~~\forall~j=1,\ldots,d\!-\!1.
\end{split}\EE
If the left-hand side of~\e_ref{psiint_e} is not zero, the sum of the exponents is $d\!-\!1$.
Thus, by symmetry, we can assume that $b_d\!=\!0$.
By \e_ref{psipullback_e} and \e_ref{psirest_e},
\begin{equation*}\begin{split}
\int_{\ov\cM_{0,2|d}}\!\!\!\!\psi_1^{a_1}\psi_2^{a_2}
\hat\psi_1^{b_1}\!\ldots\!\hat\psi_d^{b_d}
=\int_{\ov\cM_{0,2|d-1}}\!\!\!\!\psi_1^{a_1-1}\psi_2^{a_2}
\hat\psi_1^{b_1}\!\ldots\!\hat\psi_d^{b_d}
+\int_{\ov\cM_{0,2|d-1}}\!\!\!\!\psi_1^{a_1}\psi_2^{a_2-1}
\hat\psi_1^{b_1}\!\ldots\!\hat\psi_d^{b_d}.
\end{split}\end{equation*}
This implies \e_ref{psiint_e} by induction on $d$
(if $d\!=\!1$, $\ov\cM_{0,2|d}$ is a single point).
\end{proof}

\noindent
Our proof of Theorems~\ref{main_thm} and~\ref{equiv_thm} immediately leads to
a closed formula for certain twisted equivariant Hurwitz numbers; see
Theorem~\ref{equiv0_thm} in Section~\ref{equivthm_sec}.
We conclude this section with a non-equivariant version of this formula.\\

\noindent
Let $x\!\in\!H^2(\P^{\i})$ denote the hyperplane class.
For any $d\!\in\!\Z^+$, let
$$ \cS^*(x)\equiv \pi_{\P^{\i}}^*\cO_{\P^{\i}}(1)\otimes\pi_{\cU}^*\cS^*
\lra \P^{\i}\!\times\!\cU\lra \P^{\i}\!\times\!\ov\cM_{0,2|d},$$
where $\pi_{\P^{\i}},\pi_{\cU}\!:\P^{\i}\!\times\!\cU\lra\P^{\i},\cU$
are the two projections.
In particular,
$$\E\big(\cS^*(x)\big)=x\!\times\!1+1\!\times\!e(\cS^*)
\in H^*(\P^{\i}\!\times\!\cU)=
\Q[x]\otimes H^*(\cU).$$
Similarly to \e_ref{Vprdfn_e}, let
\BE{V0dfn_e} \dot\V_{\a}^{(d)}(x)=\bigoplus_{a_k>0}R^0\pi_*\big(\cS^*(x)^{a_k}(-\si_1)\big)
 \oplus \bigoplus_{a_k<0}R^1\pi_*\big(\cS^*(x)^{a_k}(-\si_1)\big)
 \lra \ov\cM_{0,2|d},\EE
where $\pi\!:\cU\!\lra\!\ov\cM_{0,2|d}$ is the projection as before;
this sheaf is locally free.
We define power series   $L_{\a},\xi_{\a}\in\Q[x][[q]]$ by
\begin{alignat*}{2}
L_{\a}&\in x+q\Q[x][[q]], &\qquad L_{\a}(x,q)-q\a^{\a}L_{\a}(x,q)^{|\a|}&=x^n, \\
\xi_{\a}&\in q\Q[x][[q]],&\qquad  x+q\frac{\nd}{\nd q}\xi_{\a}(x,q)&=L_{\a}(x,q).
\end{alignat*}

\begin{thmnum}\label{main0_thm}
If $l\!\in\!\Z^{\ge0}$ and $\a\!\in\!(\Z^*)^l$, then
$$1+(\hb_1\!+\!\hb_2)\sum_{d=1}^{\i}\frac{q^d}{d!}
\int_{\ov\cM_{0,2|d}}\!\!\frac{e(\dot\V_{\a}^{(d)}(x))}{(\hb_1\!-\!\psi_1)(\hb_2\!-\!\psi_2)}
=\ne^{\frac{\xi_{\a}(x,q)}{\hb_1}+\frac{\xi_{\a}(x,q)}{\hb_2}}
\in \Q[x]\big[\big[\hb_1^{-1},\hb_2^{-1},q\big]\big].$$
\end{thmnum}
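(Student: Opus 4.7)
I would prove Theorem~\ref{main0_thm} as the non-equivariant specialization of the equivariant Theorem~\ref{equiv0_thm}, which emerges as a byproduct of the proof of Theorem~\ref{equiv_thm} in Section~\ref{equivthm_sec}. The bridge is provided by Atiyah-Bott localization on $\ov{Q}_{0,2}(\Pn,d)$.

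Geometrically, the identification~\e_ref{curvquot_e} with $p\!=\!1$ and $n_1\!=\!r_1\!=\!1$ presents $\ov\cM_{0,2|d}/\bS_d$ as $\ov{Q}_{0,2}(\bG(1,1),d)$, and the factor $1/d!$ in each summand of the left-hand side is precisely the weight for this quotient. The twisting of $\cS^*$ by the pullback of $\cO_{\P^{\i}}(1)$ to form $\cS^*(x)$ mimics a single equivariant weight on the target. More concretely, in the Atiyah-Bott localization of the equivariant series $\cZ_{n;\a}$ on $\ov{Q}_{0,2}(\Pn,d)$, the contribution from decorated graphs consisting of a single vertex at a torus-fixed point $P_i\!\in\!\Pn$ carrying all of the degree is precisely the equivariant version of the left-hand side of Theorem~\ref{main0_thm}, with the equivariant weight at $P_i$ playing the role of~$x$.

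The next step uses the main machinery of Section~\ref{equivthm_sec}. By Lemma~\ref{cYrec_lmm}, Proposition~\ref{cZrec_prp}, and Lemma~\ref{cYcZcomp_lmm}, the identity $\cY_{n;\a}\!=\!\cZ_{n;\a}$ of Theorem~\ref{equiv_thm} is equivalent to a family of residue identities that pin down these single-vertex Hurwitz integrals in terms of residues of~$\cY_{n;\a}$. The latter residues are computable from the explicit hypergeometric formula~\e_ref{Ydfn_e}, and Lagrange inversion applied to the implicit equation defining $L_{\a}$ repackages the answer as $\ne^{\xi_{\a}/\hb_1+\xi_{\a}/\hb_2}$. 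A consistency check is provided by Lemma~\ref{M02_lmm}: since all $\hat\psi$-integrals on $\ov\cM_{0,2|d}$ vanish, only the pure $\psi$-integrals $\binom{d-1}{a_1,a_2}$ contribute, and this product structure is exactly what separates the exponential into the two factors $\ne^{\xi_{\a}/\hb_1}$ and $\ne^{\xi_{\a}/\hb_2}$.

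The main obstacle lies in matching up the single-vertex localization contribution with the left-hand side and then carrying out the Lagrange inversion cleanly. This amounts to careful bookkeeping of $\T$-weights on normal bundles to the fixed loci in $\ov{Q}_{0,2}(\Pn,d)$, of $\bS_d$-averaging factors appearing through~\e_ref{curvquot_e}, and of the interplay between the hypergeometric numerators in~\e_ref{Ydfn_e} and the binomial coefficients $\binom{d-1}{a_1,a_2}$ from Lemma~\ref{M02_lmm}; once these ingredients are aligned, the exponential form of the answer follows from standard residue manipulations.
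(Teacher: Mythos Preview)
Your reduction to Theorem~\ref{equiv0_thm} is exactly what the paper does: it specializes that theorem by taking $n\!=\!1$, $i\!=\!1$, and $\al_1\!=\!x$ (so ``non-equivariant specialization'' is a slight misnomer---there is still a one-torus weight, which is renamed~$x$, and the product $\prod_{k\neq i}$ in the denominator becomes empty). Your second paragraph, identifying the single-vertex localization contributions with these Hurwitz integrals, is also on target and reflects Proposition~\ref{cZrec_prp}.

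The gap is in how you propose to obtain the exponential factorization on the right-hand side. You invoke Lemma~\ref{M02_lmm} to claim that ``only the pure $\psi$-integrals $\binom{d-1}{a_1,a_2}$ contribute.'' This is not correct for $l\!>\!0$: by~\e_ref{Vplus_e} and~\e_ref{Vminus_e}, the class $\E(\dot\V_{\a}^{(d)}(x))$ involves not only the $\hat\psi_s$ but also the diagonal classes~$\De_s$, and Lemma~\ref{M02_lmm} says nothing about integrals containing~$\De_s$. So the twisted Hurwitz integrals do not reduce to multinomial coefficients. Moreover, the residue identities~\e_ref{cYcZrel_e} of Proposition~\ref{cYcZrel_prp} give, for each $r\!\ge\!0$, one relation among the integrals $\F_{n;\a}^{(r,b)}$ for varying~$b$; this system is underdetermined, so Lagrange inversion on~$\cY_{n;\a}$ alone cannot extract the individual $\F_{n;\a}^{(b_1,b_2)}$.

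What the paper actually uses to separate the $\hb_1$- and $\hb_2$-dependence is the divisor relation $\psi_2\!=\!D_{1\hat1;2}$ on $\ov\cM_{0,2|d}$ (equation~\e_ref{TRR_e}), which together with the boundary splitting~\e_ref{D112split_e} yields the recursion~\e_ref{Intspli_e2} and hence the closed form
\[
\F_{n;\a}^{(b_1,b_2)}=\frac{1}{(b_1\!+\!b_2\!+\!1)!}\binom{b_1\!+\!b_2}{b_1}
\big(\F_{n;\a}^{(0,0)}\big)^{b_1+b_2+1}.
\]
With this factorization in hand, the $r\!=\!0$ case of~\e_ref{cYcZrel_e} becomes the single residue condition~\e_ref{YRs_e}, which determines $\F_{n;\a}^{(0,0)}$ uniquely; the identification $\F_{n;\a}^{(0,0)}\!=\!\xi_{n;\a}$ then comes from \cite[Remark~4.5]{g0ci}. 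Your plan should replace the appeal to Lemma~\ref{M02_lmm} with this TRR-type argument.
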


\begin{proof}
This is obtained from Theorem~\ref{equiv0_thm} by setting $n\!=\!1$, $i\!=\!1$, and
$\al_1\!=\!x$.
\end{proof}

\noindent
In the case $l\!=\!0$, the left-hand side of the expression in Theorem~\ref{main0_thm}
reduces~to
\begin{equation*}\begin{split}
&1+\sum_{\begin{subarray}{c}a_1,a_2\ge0\\  \end{subarray}} \!\!\! \big(\hb_1^{-a_1}\hb_1^{-(a_2+1)}+\hb_1^{-(a_1+1)}\hb_1^{-a_2}\big)
\frac{q^{a_1+a_2+1}}{(a_1\!+\!a_2\!+\!1)!}
\int_{\ov\cM_{0,2|a_1+a_2+1}}\psi_1^{a_1}\psi_2^{a_2}\\
&\hspace{.5in}
=1+\sum_{\begin{subarray}{c}a_1,a_2\ge0\\  \end{subarray}} \!\!\! \big(\hb_1^{-a_1}\hb_1^{-(a_2+1)}+\hb_1^{-(a_1+1)}\hb_1^{-a_2}\big)
\frac{q^{a_1+a_2+1}}{(a_1\!+\!a_2\!+\!1)!}\binom{a_1\!+\!a_2}{a_1}
=\ne^{\frac{q}{\hb_1}+\frac{q}{\hb_2}}\,;
\end{split}\end{equation*}
the first equality above holds by Lemma~\ref{M02_lmm}.
Since $\xi_{()}(x,q)\!=\!q$, this agrees with Theorem~\ref{main0_thm}.

\section{Equivariant cohomology}
\label{equivsetup_sec}

\noindent
In this section, we review the notion of equivariant cohomology and set up related notation
that will be used throughout the rest of the paper.
For the most part, our notation agrees with \cite[Chapters~29,30]{MirSym};
the main difference is that we work with $\P^{n-1}$ instead of~$\P^n$.\\

\noindent
For any $n\!\in\!\Z^+$, let
$$[n]=\{1,\ldots,n\}.$$
We denote by $\T$ the $n$-torus $(\C^*)^n$.
It acts freely on $E\T\!=\!(\C^{\i})^n\!-\!0$:
$$(t_1,\ldots,t_n)\cdot (z_1,\ldots,z_n)
=\big(t_1z_1,\ldots,t_nz_n\big).$$
Thus, the classifying space for $\T$ and its group cohomology are given by
$$\qquad B\T\equiv E\T/\T=(\P^{\i})^n  \qquad\hbox{and}\qquad
H_{\T}^*\equiv H^*(B\T;\Q)=\Q[\al_1,\ldots,\al_n],$$
where $\al_i\!=\!\pi_i^*c_1(\ga^*)$ if
$$\pi_i\!: (\P^{\i})^n\lra\P^{\i} \qquad\hbox{and}\qquad
\ga\lra\P^{\i}$$
are the projection onto the $i$-th component and the tautological line bundle, respectively.
Let
$$\H_{\T}^*=\Q_{\al}\equiv \Q(\al_1,\ldots,\al_n)$$
the field of fractions of $H_{\T}^*$.\\

\noindent
A representation $\rho$ of $\T$, i.e.~a linear action of $\T$ on $\C^k$,
induces a vector bundle over~$B\T$:
$$V_{\rho}\equiv E{\T}\times_{\T}\C^k.$$
If $\rho$ is one-dimensional, we will call
$$c_1(V_{\rho}^*)=-c_1(V_{\rho})\in H_{\T}^*\subset \Q_{\al}$$
the \sf{weight} of $\rho$.
For example, $\al_i$ is the weight of the representation
\BE{ifactorrep_e}
\pi_i\!: \T\lra\C^*, \qquad
(t_1,\ldots,t_n)\cdot z =  t_iz.\EE
More generally, if a representation $\rho$ of $\T$ on $\C^k$ splits
into one-dimensional representations with weights $\be_1,\ldots,\be_k$,
we will call $\be_1,\ldots,\be_k$ the \sf{weights} of~$\rho$.
In such a case,
\begin{equation}\label{weightspord_e}
e(V_{\rho}^*)=\be_1\cdot\ldots\cdot\be_k.
\end{equation}
We will call the representation $\rho$ of $\T$ on $\C^n$ with weights $\al_1,\ldots,\al_n$
the \sf{standard representation} of~$\T$.\\

\noindent
If $\T$ acts on a topological space $M$, let
$$H_{\T}^*(M)\equiv H^*(BM;\Q), \qquad\hbox{where}\qquad BM=E\T\!\times_{\T}\!M,$$
denote the corresponding \sf{equivariant cohomology} of $M$.
The projection map $BM\!\lra\!B\T$ induces an action of $H_{\T}^*$ on $H_{\T}^*(M)$.
Let
$$\H_{\T}^*(M)=H_{\T}^*(M)\otimes_{H_{\T}^*}\H_{\T}^*.$$
If the $\T$-action on $M$ lifts to an action on a (complex) vector bundle $V\!\lra\!M$,
then
$$BV\equiv E{\T}\!\times_{\T}\!V$$
is a vector bundle over $BM$.
Let
$$\E(V)\equiv e(BV)\in H_{\T}^*(M)\subset \H_{\T}^*(M)$$
denote the \sf{equivariant Euler class of} $V$.\\

\noindent
Throughout the paper, we work with the standard action of $\T$ on $\P^{n-1}$,
i.e.~the action induced by the standard action $\rho$ of $\T$ on~$\C^n$:
$$(t_1,\ldots,t_n)\cdot [z_1,\ldots,z_n]
=\big[t_1z_1,\ldots,t_nz_n\big].$$
Since $B\P^{n-1}=\P V_{\rho}$,
$$H_{\T}^*(\P^{n-1})\equiv H^*\big(\P V_{\rho};\Q\big)
= \Q[\x,\al_1,\ldots,\al_n]
\big/\big(\x^n\!+\!c_1(V_{\rho})\x^{n-1}\!+\!\ldots\!+\!c_n(V_{\rho})\big),$$
where $\x\!=\!c_1(\ti\ga^*)$ and $\ti\ga\!\lra\!\P V_{\rho}$
is the tautological line bundle.
Since
$$c(V_{\rho})=(1-\al_1)\ldots(1-\al_n),$$
it follows that
\begin{equation}\label{pncoh_e}\begin{split}
H_{\T}^*(\P^{n-1}) &= \Q[\x,\al_1,\ldots,\al_n]\big/(\x\!-\!\al_1)\ldots(\x\!-\!\al_n),\\
\H_{\T}^*(\P^{n-1}) &= \Q_{\al}[\x]\big/(\x\!-\!\al_1)\ldots(\x\!-\!\al_n).
\end{split}\end{equation}\\

\noindent
The standard action of $\T$ on $\P^{n-1}$ has $n$~fixed points:
$$P_1=[1,0,\ldots,0], \qquad P_2=[0,1,0,\ldots,0],
\quad\ldots\quad P_n=[0,\ldots,0,1].$$
For each $i\!=\!1,2,\ldots,n$, let
\BE{phidfn_e} \phi_i= \prod_{k\neq i}(\x\!-\!\al_k) \in H_{\T}^*(\P^{n-1}).\EE
By equation~\e_ref{phiprop_e} below, $\phi_i$ is the equivariant Poincare dual of $P_i$.
We also note that $\ti\ga|_{BP_i}\!=\!V_{\pi_i}$, where $\pi_i$ is as in~\e_ref{ifactorrep_e}.
Thus, the restriction map on the equivariant cohomology induced by the inclusion
$P_i\!\lra\!\P^{n-1}$ is given~by
$$H_{\T}^*(\P^{n-1})=\Q[\x,\al_1,\ldots,\al_n]\big/\prod_{k=1}^{k=n}(\x\!-\!\al_k)
\lra H_{\T}^*(P_i)=\Q[\al_1,\ldots,\al_n], \qquad \x\lra\al_i.$$
In particular, if $\F\!\in\!H_{\T}^*(\Pn)$,  then
\BE{uniquecond_e}
\F=0\in H_{\T}^*(\Pn)  \quad\Llra\quad
\F(\x\!=\!\al_i)\equiv \F|_{P_i}=0
\in \Q[\al_1,\ldots,\al_n]\subset\Q_{\al} ~\forall~i\in[n].\EE\\

\noindent
The tautological line bundle $\ga_{n-1}\!\lra\!\P^{n-1}$ is a subbundle of
$\P^{n-1}\!\times\!\C^n$ preserved by the diagonal action of~$\T$.
Thus, the action of $\T$ on $\P^{n-1}$ naturally lifts to an action
on~$\ga_{n-1}$ and
\begin{equation}\label{garestr_e}
\E\big(\ga_{n-1}^*\big)\big|_{P_i}=\al_i \qquad\forall~i=1,2,\ldots,n.
\end{equation}
The $\T$-action on $\P^{n-1}$ also has a natural lift to the vector bundle
$T\P^{n-1}\!\lra\!\P^{n-1}$ so that  there is a short exact sequence
$$0\lra \ga_{n-1}^*\otimes\ga_{n-1}
\lra \ga_{n-1}^*\otimes \C^n
\lra T\P^{n-1} \lra0$$
of $\T$-equivariant vector bundles on $\P^{n-1}$.
By~\e_ref{weightspord_e}, \e_ref{garestr_e}, and~\e_ref{phidfn_e},
\begin{equation}\label{tangrestr_e}
\E\big(T\P^{n-1}\big)\big|_{P_i}=\prod_{k\neq i}(\al_i\!-\!\al_k)
=\phi_i|_{P_i}
\qquad\forall~i=1,2,\ldots,n.
\end{equation}\\

\noindent
If $\T$ acts smoothly on a smooth compact oriented manifold $M$, there is a well-defined
integration-along-the-fiber homomorphism
$$\int_M\!: H_{\T}^*(M)\lra H_{\T}^*$$
for the fiber bundle $BM\!\lra\!B\T$.
The classical localization theorem of~\cite{ABo} relates it to
integration along the fixed locus of the $\T$-action.
The latter is a union of smooth compact orientable manifolds~$F$;
$\T$ acts on the normal bundle $\N F$ of each~$F$.
Once an orientation of $F$ is chosen, there is a well-defined
integration-along-the-fiber homomorphism
$$\int_F\!: H_{\T}^*(F)\lra H_{\T}^*.$$
The localization theorem states that
\begin{equation}\label{ABothm_e}
\int_M\eta = \sum_F\int_F\frac{\eta|_F}{\E(\N F)} \in \Q_{\al}
\qquad\forall~\eta\in H_{\T}^*(M),
\end{equation}
where the sum is taken over all components $F$ of the fixed locus of $\T$.
Part of the statement of~\e_ref{ABothm_e} is that $\E(\N F)$
is invertible in~$\H_{\T}^*(F)$.
In the case of the standard action of $\T$ on~$\P^{n-1}$, \e_ref{ABothm_e} implies that
\begin{equation}\label{phiprop_e}
\eta|_{P_i}=\int_{\P^{n-1}}\eta\phi_i \in\Q_{\al}
\qquad\qquad\forall~\eta\!\in\!\H_{\T}^*(\P^{n-1}),~i=1,2,\ldots,n;
\end{equation}
see also \e_ref{tangrestr_e}.\\

\noindent
Finally, if $f\!:M\!\lra\!M'$ is a $\T$-equivariant map between two compact oriented
manifolds, there is a well-defined pushforward homomorphism
$$f_*\!: H_{\T}^*(M) \lra H_{\T}^*(M').$$
It is characterized by the property that
\BE{pushdfn_e}
\int_{M'}(f_*\eta)\,\eta'=\int_M\eta\,(f^*\eta')
\qquad~\forall~\eta\!\in\! H_{\T}^*(M),\, \eta'\!\in\! H_{\T}^*(M').\EE
The homomorphism $\int_M$ of the previous paragraph corresponds to $M'$ being a point.
It is immediate from~\e_ref{pushdfn_e} that
\BE{pushprop_e}
f_*\big(\eta\,(f^*\eta')\big)=(f_*\eta)\,\eta'
\qquad~\forall~\eta\!\in\! H_{\T}^*(M),\, \eta'\!\in\! H_{\T}^*(M').\EE

\section{Equivariant mirror theorem}
\label{equivthm_sec}

\noindent
In this section, we state an equivariant version of Theorem~\ref{main_thm},
Theorem~\ref{equiv_thm}, which immediately implies Theorems~\ref{main_thm}.
It is proved in the rest of this paper,
as outlined in Section~\ref{intro_sec} after the statement of Theorem~\ref{main_thm}.
We then formulate an equivariant version of Theorem~\ref{main0_thm},
Theorem~\ref{equiv0_thm},
providing a closed formula for equivariant Hurwitz numbers.
This theorem immediately implies Theorem~\ref{main0_thm}
and is obtained in Section~\ref{mainpf_sec}
by combining Proposition~\ref{cYcZrel_prp} in this paper with some results
from~\cite{g0ci}.
Throughout the paper, we use calligraphic letters, e.g.~$\cY$ and~$\cZ$,
for equivariant generating functions.\\

\noindent
The standard $\T$-representation on $\C^n$ (as well as any other representation) induces
a $\T$-action on the trivial rank~$n$ sheaf over any quasi-stable curve $(\cC,y_1,\ldots,y_m)$,
$$\T\cdot \C^n\!\otimes\!\cO_{\cC}\lra \C^n\!\otimes\!\cO_{\cC}, \qquad
(t_1,\ldots,t_n)\cdot(f_1,\ldots,f_n)=(t_1f_1,\ldots,t_nf_n),$$
and thus on the rank~1 subsheaves of this sheaf.
This action preserves the degree of the subsheaf and
the torsion and stability properties of Section~\ref{SQdfn_sec} and
thus induces a $\T$-action on the moduli space $\ov{Q}_{g,m}(\Pn,d)$,
with respect to which the evaluation maps
$$\ev_i\!:\ov{Q}_{g,m}(\Pn,d)\lra\Pn, \qquad i=1,2,\ldots,m,$$
are $\T$-equivariant.
This action lifts to a $\T$-action on the universal subsheaf $\cS\!\lra\!\cU$
and thus to~$\T$-actions on the locally free sheaves
$$\pi_*(\si_i^2)\lra\ov{Q}_{g,m}(\Pn,d),\qquad
\dot\V_{n;\a}^{(d)}\lra\ov{Q}_{0,2}(\Pn,d).$$
This gives rise to $\T$-equivariant cohomology classes,
$$\psi_i\in H_{\T}^*\big(\ov{Q}_{g,m}(\Pn,d)\big), \qquad
\E(\dot\V_{n;\a}^{(d)})\in H_{\T}^*\big(\ov{Q}_{0,2}(\Pn,d)\big).$$
The stable quotients analogue of the equivariant version of Givental's $J$-function
is given~by
\BE{cZdfn_e}
\cZ_{n;\a}(\x,\hb,q) \equiv
1+\sum_{d=1}^{\i}\!q^d\ev_{1*}\left[\frac{\E(\dot\V_{n;\a}^{(d)})}{\hb\!-\!\psi_1}\right]
\in H_{\T}^*(\Pn)\big[\big[\hb^{-1},q\big]\big],\EE
where $\ev_1:\ov{Q}_{0,2}(\Pn,d)\lra\Pn$ is as before.
The equivariant analogue of the power series~\e_ref{Ydfn_e} is given~by
\BE{cYdfn_e}
\cY_{n;\a}(\x,\hb,q)\equiv\sum_{d=0}^{\i}q^d
\frac{\prod\limits_{a_k>0}\prod\limits_{r=1}^{a_kd}\!(a_k\x\!+\!r\hb)
\prod\limits_{a_k<0}\!\!\prod\limits_{r=0}^{-a_kd-1}\!\!(a_k\x\!-\!r\hb)}
{\prod\limits_{r=1}^d\prod\limits_{k=1}^n(\x\!-\!\al_k\!+\!r\hb)}
\in \Q[\al_1,\ldots,\al_n,\x]\big[\big[\hb^{-1},q\big]\big]\,.\EE
We view~\e_ref{cZdfn_e} and~\e_ref{cYdfn_e} as power series in~$\hb^{-1}$ and~$q$,
by expanding around~$\hb\!=\!\i$ and \hbox{$q\!=\!0$}.
The coefficients of powers of $\hb^{-1}$ and~$q$ in~\e_ref{cYdfn_e}
are polynomials in $\al_1,\ldots,\al_n$ and~$\x$;
the coefficients in~\e_ref{cYdfn_e} are $\T$-equivariant cohomology classes
on~$\Pn$, which can also be represented by polynomials.

\begin{thmnum}\label{equiv_thm}
If $l\!\in\!\Z^{\ge0}$, $n\!\in\!\Z^+$, and $\a\!\in\!(\Z^*)^l$ are such that
$|\a|\!\le\!n$, then the equivariant stable quotients analogue of Givental's $J$-function
satisfies
\BE{equivMS_e}\cZ_{n;\a}(\x,\hb,q)=\frac{\cY_{n;\a}(\x,\hb,q)}{I_{n;\a}(q)}
\in H_{\T}^*(\Pn)\big[\big[\hb^{-1},q\big]\big].\EE
\end{thmnum}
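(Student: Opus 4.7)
The plan is to follow the localization-plus-uniqueness strategy used in the Gromov--Witten proof of mirror symmetry, then depart from it in the Calabi-Yau range $|\a|=n$ where the standard $\mod(\hb^{-1})^2$ argument breaks down. The first reduction is to present both sides of \eqref{equivMS_e} as $\fC$-recursive and self-polynomial power series, so that Proposition~\ref{uniqueness_prp} reduces the equality to a comparison of their recursion coefficients.

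Concretely, I would first apply the Atiyah--Bott localization theorem \eqref{ABothm_e} to $\ov{Q}_{0,2}(\Pn,d)$ to express $\cZ_{n;\a}$ as a sum of rational functions in $\hb$ indexed by the $\T$-fixed loci. These loci are parametrized by decorated graphs, and the resulting expansion should exhibit $\cZ_{n;\a}$ as $\fC$-recursive in the sense of Definition~\ref{recur_dfn} with structure coefficients given by~\eqref{Cdfn_e}; the same recursion can be read off algebraically from the hypergeometric formula~\eqref{cYdfn_e} for $\cY_{n;\a}$. Next, I would verify the self-polynomiality condition of Definition~\ref{SPC_dfn} for $\cZ_{n;\a}$ by applying localization to the moduli space $\ov{Q}_{0,2}(\P^1\!\times\!\Pn,(1,d))$ of stable pairs of quotients from Section~\ref{SQdfn_sec}; this mirrors the analogous Gromov--Witten argument on $\ov\M_{0,2}(\P^1\!\times\!\Pn,(1,d))$. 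The corresponding property of $\cY_{n;\a}$ is a direct algebraic computation.

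At this point \eqref{equivMS_e} has been reduced to comparing recursion coefficients $\cY^r_i(d)$ and $\cZ^r_i(d)$. Lemma~\ref{cYrec_lmm} will express $\cY^r_i(d)$ in terms of residues of $\cY_{n;\a}$, while Proposition~\ref{cZrec_prp}, obtained by a careful graph analysis together with integrals over the weighted-curve moduli spaces $\ov\cM_{0,2|d}$ of Proposition~\ref{torsionmod_prp}, will express $\cZ^r_i(d)$ via a \emph{different} combination of residues. Equating the two presentations (Lemma~\ref{cYcZcomp_lmm}) reduces the theorem to a family of residue identities purely for $\cY_{n;\a}$. For $|\a|\le n-2$ these identities already hold, since in that range $\cY_{n;\a}=\cZ_{n;\a}$ by the Gromov--Witten mirror theorem of Givental, LLY, and Elezi. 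The key structural observation, to be packaged as Proposition~\ref{cYcZrel_prp}, is that the residue identities are \emph{independent of $n$} in a suitable sense: they concern residues of $\cY_{n;\a}$ at poles of the form $\hb=(\al_j-\al_i)/d$, whose structure does not feel the ambient dimension. This lets me transport the identities from the low range to the full range $|\a|\le n$, yielding Theorem~\ref{equiv_thm} including the Calabi-Yau case.

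The main obstacle is precisely this last step. As the introduction emphasizes, the $\mod(\hb^{-1})^2$ part of $\cZ_{n;\a}$ fails to match that of $\cY_{n;\a}/I_{n;\a}$ when $|\a|=n$ and $\ell^-(\a)\le 1$, and there is no a priori vanishing even when $\ell^-(\a)\ge 2$, so the short Gromov--Witten proof does not transplant. The technical heart of the argument is therefore the combinatorial analysis of stable-quotient localization graphs required to establish the residue formula of Proposition~\ref{cZrec_prp}, together with the $n$-independence of Proposition~\ref{cYcZrel_prp} that makes it possible to deduce the Calabi-Yau identities from the settled range $|\a|\le n-2$.
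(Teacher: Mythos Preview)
Your overall architecture matches the paper's: establish $\fC$-recursivity and self-polynomiality for both $\cY_{n;\a}$ and $\cZ_{n;\a}$, reduce via Lemma~\ref{cYcZcomp_lmm} to residue identities for $\cY_{n;\a}$, and then use $n$-independence to propagate those identities from the range $|\a|\le n-2$ to all $|\a|\le n$.

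There is, however, a genuine gap in how you handle the base range $|\a|\le n-2$. You write that $\cY_{n;\a}=\cZ_{n;\a}$ there ``by the Gromov--Witten mirror theorem of Givental, LLY, and Elezi.'' That theorem concerns $\cZ_{n;\a}^{\GW}$, not the stable-quotients series $\cZ_{n;\a}$; equality of the twisted GW and SQ invariants in this range is a \emph{consequence} of Theorem~\ref{equiv_thm}, not an input to it. The paper instead proves Corollary~\ref{MirSym_crl} directly: for $|\a|\le n-2$ one has $\cY_{n;\a}\cong 1\pmod{\hb^{-2}}$ by inspection of~\eqref{cYdfn_e}, while $\cZ_{n;\a}\cong 1\pmod{\hb^{-2}}$ because
\[
\dim\ov{Q}_{0,2}(\Pn,d)-\rk\,\dot\V_{n;\a}^{(d)}
=(n-|\a|)d+(n-2)>n-1=\dim\Pn,
\]
so the pushforward in~\eqref{cZdfn_e} has no $\hb^0$ or $\hb^{-1}$ contribution in positive $q$-degree. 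Then Proposition~\ref{uniqueness_prp} gives $\cY_{n;\a}=\cZ_{n;\a}$. You need this self-contained argument; the GW literature does not supply it.

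Second, your description of the $n$-independence step is off. The residues in Lemma~\ref{cYcZcomp_lmm} and Proposition~\ref{cYcZrel_prp} are at $\hb=0$, not at $\hb=(\al_j-\al_i)/d$. The mechanism in the paper is concrete: after normalizing and substituting $y=\al_i^{-1}$ and $\be_k=(\al_i-\al_k)^{-1}$, both the Hurwitz integrals over $\ov\cM_{0,2|d}$ (computed via~\eqref{Vplus_e}--\eqref{Vdenom_e}) and the Laurent coefficients of $\cY_{n;\a}(\al_i,\hb,q)$ at $\hb=0$ become polynomials in $y$ and the elementary symmetric functions $\fs_1,\fs_2,\ldots$ of the $\be_k$ that do not depend on~$n$; see~\eqref{cHnums_e}--\eqref{cHcY_e}. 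That is what lets the identity, once known for $n$ large relative to~$\a$, hold for all~$n$.
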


\noindent
Restricting to a fiber of the projection
$$B\P^{n-1}\equiv E\T\!\times_{\T}\!\P^{n-1}\lra \P^{n-1},$$
we send $\x$ to $x$ and $\al_i$ to 0; this gives Theorem~\ref{main_thm}.
The relation of Theorem~\ref{equiv_thm} to its Gromov-Witten analogue
is the same as the relation of Theorem~\ref{main_thm} to its Gromov-Witten analogue;
see the paragraph following the statement of Theorem~\ref{main_thm} in Section~\ref{intro_sec}.
In particular, the twisted equivariant stable quotients invariants of~$\P^{n-1}$
determined by a tuple~$\a$ are the same as the corresponding Gromov-Witten invariants
if $|\a|\!-\!\ell^-(\a)\!\le\!n\!-\!2$, but not if $|\a|\!-\!\ell^-(\a)\!=\!n\!-\!1,n$.\\

\noindent
We prove Theorem~\ref{equiv_thm} through a two-pronged approach.
We show that the power series $\cY_{n;\a}$ and $\cZ_{n;\a}$ are
$\fC$-recursive in the sense of Definition~\ref{recur_dfn}
with the collection~$\fC$ given by~\e_ref{Cdfn_e} and
satisfy the self-polynomiality condition of Definition~\ref{SPC_dfn};
see Lemma~\ref{cYrec_lmm} and Propositions~\ref{cZrec_prp} and~\ref{cZSPC_prp}.
Proposition~\ref{uniqueness_prp} then implies that
these power series are determined by their $\mod\hb^{-2}$-parts,
i.e.~the coefficients of~$\hb^0$ and~$\hb^{-1}$ in this case.
It is straightforward to determine the $\mod\hb^{-2}$-part of $\cY_{n;\a}$
in all cases ($\cY_{n;\a}$ is given by an explicit algebraic expression)
and the $\mod\hb^{-2}$-part of $\cZ_{n;\a}$ if $|\a|\!\le\!n\!-\!2$,
thus establishing Theorem~\ref{equiv_thm} whenever $|\a|\!\le\!n\!-\!2$;
see Corollary~\ref{MirSym_crl}.\\

\noindent
In order to establish Theorem~\ref{equiv_thm} in all cases, we show that
the secondary coefficients $\cY_i^r(d)$ and $\cZ_i^r(d)$,
instead of~$\F_i^r(d)$, in the recursions~\e_ref{recurdfn_e2}
for $\cY_{n;\a}$ and~$\cZ_{n;\a}$ are the same.
By induction on~$d$, this implies that the coefficients of~$q^d$
on the two sides of~\e_ref{equivMS_e} are the same because this is the case for $d\!=\!0$
(when both coefficients are~1).
As part of the proof of $\fC$-recursivity for~$\cY_{n;\a}$, we show that
$\cY_i^r(d)$ is determined by the expansion of $\cY_{n;\a}(\al_i,\hb,q)$ around
$h\!=\!0$; see Lemma~\ref{cYrec_lmm}.
As part of the proof of $\fC$-recursivity for~$\cZ_{n;\a}$, we show that
$\cZ_i^r(d)$ is also determined by the expansion of $\cZ_{n;\a}(\al_i,\hb,q)$ around
$h\!=\!0$; see Proposition~\ref{cZrec_prp}.
In contrast to~$\cY_i^r(d)$, $\cZ_i^r(d)$ is determined by lower-degree coefficients
of~$\cZ_{n;\a}$ or equivalently by~$\cZ_j^s(d')$ with $d'\!<\!d$;
this relation thus completely determines~$\cZ_{n;\a}$ (assuming $\fC$-recursivity).
It follows that~\e_ref{equivMS_e} holds if and only~if the coefficients
$\cY_i^r(d)$ for $\cY_{n;\a}$ satisfy the same relation; see Lemma~\ref{cYcZcomp_lmm}.
The coefficients in this relation involve twisted Hurwitz numbers over
the moduli spaces~$\ov\cM_{0,2|d}$.
These are not easy to compute, but
they can be described qualitatively in way independent of~$n$.
This implies that the validity of the desired recursion
for the secondary coefficients~$\cY_i^r(d)$ for~$\cY_{n;\a}$ is {\it independent} of~$n$.
Since this recursion is equivalent to~\e_ref{equivMS_e} whenever $|\a|\!\le\!n$
and \e_ref{equivMS_e} holds whenever $|\a|\!\le\!n\!-\!2$ (by Corollary~\ref{MirSym_crl}),
it follows that the recursion holds in all cases (see Proposition~\ref{cYcZrel_prp})
and \e_ref{equivMS_e} holds whenever $|\a|\!\le\!n$, as claimed.\\

\noindent
As stated in Section~\ref{intro_sec},
Theorem~\ref{equiv_thm} extends to products of projective spaces and
concavex sheaves~\e_ref{gensheaf_e}.
The relevant torus action is then the product of the actions on the components
described in Section~\ref{equivsetup_sec}.
If its weights are denoted by $\al_{s;k}$, with $s\!=\!1,\ldots,p$ and $k\!=\!1,\ldots,n_s$,
then
\begin{alignat}{1}
\label{gencY_e}
\cY_{n_1,\ldots,n_p;\a}(\x_1,\ldots,\x_p,\hb,q_1,\ldots,q_p)
&\in \Q[\al_{1;1},\ldots,\al_{p;n_p},\x_1,\ldots,\x_p]\big[\big[\hb^{-1},q_1,\ldots,q_p\big]\big],\\
\label{gencZ_e}
\cZ_{n_1,\ldots,n_p;\a}(\x_1,\ldots,\x_p,\hb,q_1,\ldots,q_p)&\in
H_{\T}^*\big(\P^{n_1-1}\!\times\!\ldots\!\times\!\P^{n_p-1}\big)
\big[\big[\hb^{-1},q_1,\ldots,q_p\big]\big],
\end{alignat}
and $\x_1,\ldots,\x_p\!\in\!H^*(\P^{n_1-1}\!\times\!\ldots\!\times\!\P^{n_p-1})$
correspond to the pullbacks of the equivariant hyperplane classes by the projection maps.
The coefficient of $q_1^{d_1}\!\ldots\!q_p^{d_p}$ in~\e_ref{gencZ_e}
is defined by the same pushforward as in~\e_ref{cZdfn_e}, with the degree~$d$
of the stable quotients replaced by~$(d_1,\ldots,d_p)$.
The coefficient of $q_1^{d_1}\!\ldots\!q_p^{d_p}$ in~\e_ref{gencY_e}
is given~by
$$\frac{\prod\limits_{a_{k;1}\ge0}
\prod\limits_{r=1}^{\sum\limits_{s=1}^p\!a_{k;s}d_s}
\hspace{-.2in}\big(\sum\limits_{s=1}^p\!a_{k;s}\x_s\!+\!r\hb\big)
\prod\limits_{a_{k;1}<0}\!\!
\prod\limits_{r=0}^{-\sum\limits_{s=1}^p\!a_{k;s}d_s\,-1}
\hspace{-.3in}\big(\sum\limits_{s=1}^p\!a_{k;s}\x_s\!-\!r\hb\big)}
{\prod\limits_{s=1}^p\prod\limits_{r=1}^{d_s}\prod\limits_{k=1}^{n_s}(\x_s\!-\!\al_{s;k}+\!r\hb)}\,.$$
Our proof of Theorem~\ref{equiv_thm} extends directly to this situation.\\

\noindent
We conclude this section with an equivariant version of Theorem~\ref{main0_thm}.
For any $d\!\in\!\Z^+$ and $\be\!\in\!H_{\T}^2$, denote by
\BE{cSbe_e} \cS^*(\be)\lra \cU\lra \ov\cM_{0,2|d}\EE
the universal sheaf with the $\T$-action so that
$$\E\big(\cS^*(\be)\big)=\be\!\times\!1+1\!\times\!e(\cS^*)
\in H_{\T}^*(\cU)=H_{\T}^*\otimes H^*(\cU).$$
Similarly to \e_ref{Vprdfn_e}, let\BE{V0prdfn_e}
\dot\V_{\a}^{(d)}(\be)=\bigoplus_{a_k>0}R^0\pi_*\big(\cS^*(\be)^{a_k}(-\si_1)\big)
 \oplus \bigoplus_{a_k<0}R^1\pi_*\big(\cS^*(\be)^{a_k}(-\si_1)\big)
 \lra \ov\cM_{0,2|d},\EE
where $\pi\!:\cU\!\lra\!\ov\cM_{0,2|d}$ is the projection as before;
this sheaf is locally free.
The bundle
\BE{V1dfn_e}\dot\V_{1}^{(d)}(\be)
\equiv  \dot\V_{(1)}^{(d)}(\be)
=R^0\pi_*\big(\cS^*(\be)(-\si_1)\big) \lra \ov\cM_{0,2|d}\EE
plays a central role  in the deformation theory of stable quotients
as explained in Section~\ref{SQlocal_sec}.
We define power series   $L_{n;\a},\xi_{n;\a}\in\Q_{\al}[\x][[q]]$ by
\begin{alignat*}{2}
L_{n;\a}&\in \x+q\Q_{\al}[\x][[q]], &\qquad
\prod_{k=1}^n\!\!\big(L_{n;\a}(\x,q)\!-\!\al_k\big)-q\a^{\a}L_{n;\a}(\x,q)^{|\a|}&=
\prod_{k=1}^n\!(\x\!-\!\al_k), \\
\xi_{n;\a}&\in q\Q_{\al}[\x][[q]],&\qquad
\x+q\frac{\nd}{\nd q}\xi_{n;\a}(\x,q)&=L_{n;\a}(\x,q).
\end{alignat*}

\begin{thmnum}\label{equiv0_thm}
If $l\!\in\!\Z^{\ge0}$, $n\!\in\!\Z^+$, and $\a\!\in\!(\Z^*)^l$, then
\begin{equation*}\begin{split}
&1+(\hb_1\!+\!\hb_2)\sum_{d=1}^{\i}\frac{q^d}{d!}
\int_{\ov\cM_{0,2|d}}\frac{\E(\dot\V_{\a}^{(d)}(\al_i))}
{\prod\limits_{k\neq i}\!\!\E(\dot\V_1^{(d)}(\al_i\!-\!\al_k))\,(\hb_1\!-\!\psi_1)(\hb_2\!-\!\psi_2)}\\
&\hspace{3in}
=\ne^{\frac{\xi_{n;\a}(\al_i,q)}{\hb_1}+\frac{\xi_{n;\a}(\al_i,q)}{\hb_2}}
\in \Q_{\al}\big[\big[\hb_1^{-1},\hb_2^{-1},q\big]\big]
\end{split}\end{equation*}
for every $i\!=\!1,\ldots,n$.
\end{thmnum}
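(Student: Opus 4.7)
The plan is to deduce the theorem by combining Proposition~\ref{cYcZrel_prp} with the general two-point extraction argument of~\cite{g0ci}. First I would define the two-point analogue $\cZ^{(2)}_{n;\a}(\x_1,\x_2,\hb_1,\hb_2,q)$ of $\cZ_{n;\a}$ by replacing $\ev_{1*}[\E(\dot\V_{n;\a}^{(d)})/(\hb-\psi_1)]$ in~\e_ref{cZdfn_e} with the two-point pushforward
\[
(\ev_1\!\times\!\ev_2)_*\bigg[\frac{\E(\dot\V_{n;\a}^{(d)})}{(\hb_1\!-\!\psi_1)(\hb_2\!-\!\psi_2)}\bigg]
\]
over $\ov{Q}_{0,2}(\Pn,d)$, and then interpret the left-hand side of Theorem~\ref{equiv0_thm} as the restriction $\cZ^{(2)}_{n;\a}|_{(P_i,P_i)}$.

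To carry out this identification I would apply Atiyah-Bott localization to $\ov{Q}_{0,2}(\Pn,d)$. The only $\T$-fixed components contributing to the $(P_i,P_i)$-restriction are those whose universal subsheaf takes the form $S=L\otimes\C_{e_i}$ for a degree~$-d$ line subsheaf $L\subset\cO_{\cC}$; both markings then necessarily map to~$P_i$. Via the isomorphism~\e_ref{curvquot_e} with $p=1$, these components are identified with $\ov\cM_{0,2|d}/\bS_d$, producing the factor $1/d!$. The virtual normal bundle in the $n-1$ transverse directions $\cS^*(\al_i-\al_k)$ assembles into the denominator $\prod_{k\neq i}\E(\dot\V_1^{(d)}(\al_i\!-\!\al_k))$, while $\dot\V_{n;\a}^{(d)}$ restricts to $\dot\V_{\a}^{(d)}(\al_i)$. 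The prefactor $1+(\hb_1+\hb_2)$ accounts for the $d=0$ term together with the normalization used in~\cite{g0ci} to place the two-point series in ``exponential-ready'' form.

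Next I would verify that $\cZ^{(2)}_{n;\a}$ is $\fC$-recursive and satisfies a two-point self-polynomiality condition extending Proposition~\ref{cZSPC_prp}, proved by localization on $\ov{Q}_{0,2}(\P^1\!\times\!\Pn,(1,d))$ with both markings of the $\P^1$-factor constrained. The structural extraction theorem of~\cite{g0ci} then forces
\[
\cZ^{(2)}_{n;\a}|_{(P_i,P_i)}=\ne^{\frac{\xi^{(i)}(q)}{\hb_1}+\frac{\xi^{(i)}(q)}{\hb_2}}
\]
for a unique $\xi^{(i)}(q)\in q\Q_{\al}[[q]]$ determined by the one-point restriction $\cZ_{n;\a}|_{P_i}$. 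By Theorem~\ref{equiv_thm} we have $\cZ_{n;\a}|_{P_i}=\cY_{n;\a}|_{\x=\al_i}/I_{n;\a}(q)$, so the extraction of $\xi^{(i)}(q)$ reduces to an algebraic computation with the hypergeometric series $\cY_{n;\a}$; Proposition~\ref{cYcZrel_prp} is precisely the statement that this computation returns the $q$-antiderivative of $L_{n;\a}(\al_i,q)-\al_i$, which is $\xi_{n;\a}(\al_i,q)$ by definition.

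The main obstacle will be porting the two-point extraction framework of~\cite{g0ci} from the Gromov-Witten setting to the stable quotients setting: one must verify both the two-point self-polynomiality condition and the fact that the one-point residue data governing $\cZ_{n;\a}$ also governs $\cZ^{(2)}_{n;\a}$, so that the extraction recipe for $\xi^{(i)}(q)$ is identical to its Gromov-Witten counterpart. Once this bridge is in place, the identification $\xi^{(i)}(q)=\xi_{n;\a}(\al_i,q)$ — the content of Proposition~\ref{cYcZrel_prp} — closes the proof.
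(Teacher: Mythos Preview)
Your localization step contains a genuine error. The restriction $\cZ^{(2)}_{n;\a}|_{(P_i,P_i)}$ is \emph{not} the left-hand side of Theorem~\ref{equiv0_thm}: by Atiyah--Bott, restricting to $(P_i,P_i)$ sums over all decorated strands $\Ga$ with $\mu(v_{\min})=\mu(v_{\max})=i$, and these include strands with edges (for instance $P_i$--$P_j$--$P_i$ for any $j\neq i$), not merely the single-vertex locus $\ov\cM_{0,2|d}/\bS_d$. The left-hand side of the theorem is the contribution of that single-vertex locus alone. So the identification in your second paragraph fails for $n\ge2$, and everything downstream collapses. A secondary issue: you invoke Theorem~\ref{equiv_thm}, which requires $|\a|\le n$, while Theorem~\ref{equiv0_thm} carries no such hypothesis; even if the rest worked you would obtain only a restricted statement.

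The paper's proof is far more direct and never passes through $\ov{Q}_{0,2}(\Pn,d)$ or any two-point $J$-function. It works entirely on $\ov\cM_{0,2|d}$: the divisor relation $\psi_2=D_{1\hat1;2}$ yields a recursion for the generating functions
\[
\F_{n;\a}^{(b_1,b_2)}(\al_i,q)=\sum_{d\ge1}\frac{q^d}{d!}\int_{\ov\cM_{0,2|d}}
\frac{\E(\dot\V_{\a}^{(d)}(\al_i))\psi_1^{b_1}\psi_2^{b_2}}
{\prod_{k\neq i}\E(\dot\V_1^{(d)}(\al_i\!-\!\al_k))},
\]
forcing $\F^{(b_1,b_2)}=\tbinom{b_1+b_2}{b_1}(\F^{(0,0)})^{b_1+b_2+1}/(b_1\!+\!b_2\!+\!1)!$; summing against $\hb_1^{-b_1-1}\hb_2^{-b_2-1}$ already produces the exponential $\ne^{\F^{(0,0)}/\hb_1+\F^{(0,0)}/\hb_2}$. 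The $r=0$ case of Proposition~\ref{cYcZrel_prp} then becomes the residue condition $\Rs{\hb=0}\{\ne^{-\F^{(0,0)}/\hb}\cY_{n;\a}(\al_i,\hb,q)\}=0$, which by \cite[Section~2.1]{bcov1} determines $\F^{(0,0)}\in q\Q_{\al}[[q]]$ uniquely; \cite[Remark~4.5]{g0ci} identifies that unique solution as $\xi_{n;\a}(\al_i,q)$.
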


\section{Algebraic observations}
\label{algebra_sec}

\noindent
In this section, we describe a number of properties of power series,
such as $\cY_{n;\a}$ in~\e_ref{cYdfn_e} and $\cZ_{n;\a}$ in~\e_ref{cZdfn_e},
that determine them completely.
We also show that $\cY_{n;\a}$ indeed satisfies these properties.\\

\noindent
If $R$ is a ring, denote by
$$R\Lau{\hb}\equiv R[[\hb^{-1}]]+R[\hb]$$
the $R$-algebra of Laurent series in $\hb^{-1}$ (with finite principal part).
If $f\!\in\!R[[q]]$ and $d\!\in\!\Z^{\ge0}$,
let $\coeff{f}_{q;d}\!\in\!R$ denote the coefficient of~$q^d$ in~$f$.
If $p\!\in\!\Z^{\ge0}$ and
$$\F(\hb,q)=\sum_{d=0}^{\i}
\bigg(\sum_{r=-N_d}^{\i}\!\!\!\F_d^{(r)}\hb^{-r}\bigg)q^d
\in R\Lau{\hb}\big[\big[q\big]\big]$$
for some $\F_d^{(r)}\!\in\!R$, we define
$$\F(\hb,q) \cong \sum_{d=0}^{\i}
\bigg(\sum_{r=-N_d}^{p-1}\!\!\!\F_d^{(r)}\hb^{-r}\bigg)q^d\quad(\mod \hb^{-p}),$$
i.e.~we drop $\hb^{-p}$ and higher powers of $\hb^{-1}$,
instead of higher powers of~$\hb$.
If $R$ is a field, let
$$R(\hb) \lhook\joinrel\lra R\Lau{\hb}$$
be the embedding given by taking the Laurent series of rational functions at $\hb^{-1}\!=\!0$.\\

\noindent
If $f\!=\!f(z)$ is a rational function in $z$ and possibly some other
variables, for any $z_0\!\in\!\P^1\!\supset\!\C$ let
\BE{Rsdfn_e}\Rs{z=z_0}f(z) \equiv \frac{1}{2\pi\I}\oint f(z)\nd z,\EE
where the integral is taken over a positively oriented loop around $z\!=\!z_0$
with no other singular points of $f\nd z$,
denote the residue of the 1-form~$f\nd z$.
If $z_1,\ldots,z_k\!\in\!\P^1$ is any collection of points, let
\BE{Rssumdfn_e}\Rs{z=z_1,\ldots,z_k}f(z)
\equiv\sum_{i=1}^{i=k}\Rs{z=z_i}f(z).\EE
By the Residue Theorem on $S^2$,
$$\sum_{\x_0\in S^2}\Rs{\x=\x_0}\big\{f(\x)\big\}=0$$
for every  rational function $f\!=\!f(\x)$ on $S^2\!\supset\!\C$.
If $f$ is regular at $z\!=\!0$, let $\left\llbracket f\right\rrbracket_{z;p}$ denote
the coefficient of $z^p$ in the power series expansion of $f$ around $z\!=\!0$.

\begin{dfn}\label{recur_dfn}
Let $C\equiv(C_i^j(d))_{d,i,j\in\Z^{+}}$ be any collection of elements of~$\Q_{\al}$.
A~power series $\F\!\in\!H_{\T}^*(\Pn)\Lau{\hb}[[q]]$ is \sf{$C$-recursive} if
the following holds:
if $d^*\!\in\!\Z^{\ge0}$ is such~that
$$\bigcoeff{\F(\x\!=\!\al_i,\hb,q)}_{q;d^*-d}\in \Q_{\al}(\hb)
\subset \Q_{\al}\Lau{\hb}
\qquad\forall\,d\!\in\![d^*],\,i\!\in\![n],$$
and $\bigcoeff{\F(\al_i,\hb,q)}_{q;d}$ is regular at
$\hb\!=\!(\al_i\!-\!\al_j)/d$ for all $d\!<\!d^*$ and $i\!\neq\!j$, then
\BE{recurdfn_e}
\bigcoeff{\F(\al_i,\hb,q)}_{q;d^*}-
\sum_{d=1}^{d^*}\sum_{j\neq i}\frac{C_i^j(d)}{\hb-\frac{\al_j-\al_i}{d}}
\bigcoeff{\F(\al_j,z,q)}_{q;d^*-d}\big|_{z=\frac{\al_j-\al_i}{d}}
\in \Q_{\al}\big[\hb,\hb^{-1}\big]\subset \Q_{\al}\Lau{\hb}.\EE
\end{dfn}

\noindent
Thus, if $\F\!\in\!H_{\T}^*(\Pn)\Lau{\hb}[[q]]$ is $C$-recursive, for any collection~$C$, then
$$\F(\x\!=\!\al_i,\hb,q)\in \Q_{\al}(\hb)\big[\big[q\big]\big]
\subset \Q_{\al}\Lau{\hb}[[q]]
\qquad\forall\,i\!\in\![n],$$
as can be seen by induction on $d$, and
\BE{recurdfn_e2} \F(\al_i,\hb,q)=\sum_{d=0}^{\i}\sum_{r=-N_d}^{N_d}
\F_i^r(d)\hb^rq^d+
\sum_{d=1}^{\i}\sum_{j\neq i}\frac{C_i^j(d)q^d}{\hb-\frac{\al_j-\al_i}{d}}
\F(\al_j,(\al_j\!-\!\al_i)/d,q) \qquad\forall~i\!\in\![n],\EE
for some $\F_i^r(d)\!\in\!\Q_{\al}$.
The nominal issue with defining $C$-recursivity by~\e_ref{recurdfn_e2}, as is normally done,
is that a priori the evaluation
of $\F(\al_j,\hb,q)$ at $\hb\!=\!(\al_j\!-\!\al_i)/d$ need not be well-defined,
since $\F(\al_j,\hb,q)$ is a power series in~$q$ with coefficients in
the Laurent series in~$\hb^{-1}$;
a priori they  may not converge anywhere.
However, taking the coefficient of each power of~$q$ in \e_ref{recurdfn_e2}
shows by induction on the degree~$d$
that this evaluation does make sense;
this is the substance of Definition~\ref{recur_dfn}.

\begin{dfn}\label{SPC_dfn}
For any $\F\!\equiv\!\F(\x,\hb,q)\in H_{\T}^*(\Pn)\Lau{\hb}[[q]]$, let
\BE{PhiZdfn_e}
\Phi_{\F}(\hb,z,q)\equiv \sum_{i=1}^n
\frac{\lr{\a}\al_i^{\ell(\a)}\ne^{\al_iz}}{\prod\limits_{k\neq i}(\al_i\!-\!\al_k)}
\F\big(\al_i,\hb,q\ne^{\hb z}\big)\F(\al_i,-\hb,q)
\in \Q_{\al}\Lau{\hb}[[z,q]].\EE
A power series $\F\!\in\!H_{\T}^*(\Pn)\Lau{\hb}[[z,q]]$
\sf{satisfies the self-polynomiality condition} if
$\Phi_{\F}\in \Q_{\al}[\hb][[z,q]]$.
\end{dfn}

\begin{prp}[{\cite[Lemma 30.3.2]{MirSym}}]\label{uniqueness_prp}
Let $\F,\F'\in H_{\T}^*(\Pn)\Lau{\hb}[[q]]$.
If $\F$ and $\F'$ are $C$-recursive, for some collection
$C\equiv(C_i^j(d))_{d,i,j\in\Z^{+}}$ of elements of~$\Q_{\al}$,
satisfy the self-polynomiality condition, and
$$\F(\x\!=\!\al_i,\hb,q),\F'(\x\!=\!\al_i,\hb,q)
\in \Q_{\al}^*+q\cdot \Q_{\al}\Lau{\hb}\big[\big[q\big]\big]
\subset \Q_{\al}\Lau{\hb}\big[\big[q\big]\big]
 \qquad\forall\,i\!\in\![n],$$
then  $\F\cong\F'~(\mod \hb^{-2})$ if and only if $\F=\F'$.
\end{prp}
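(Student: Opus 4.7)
The ``if'' direction is trivial. For the ``only if'' direction, set $\G := \F - \F'$ and argue by strong induction on the $q$-degree that $\coeff{\G}_{q;d} = 0$ for every $d \ge 0$. The base case $d = 0$ is immediate: by the final hypothesis, $\bigcoeff{\F(\al_i,\hb,q)}_{q;0}$ and $\bigcoeff{\F'(\al_i,\hb,q)}_{q;0}$ are constants in $\Q_\al^*$ independent of~$\hb$, and the $\mod\hb^{-2}$ agreement pins them down, so they coincide for each~$i$.

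For the inductive step with $d^* \ge 1$, assume $\coeff{\G}_{q;d'} = 0$ for all $d' < d^*$. Subtracting the $C$-recursion identity \eqref{recurdfn_e2} for $\F'$ from the one for~$\F$ and taking the $q^{d^*}$-coefficient, the ``recursive'' second sum of \eqref{recurdfn_e2} cancels, since it only involves $\coeff{\G}_{q;d'}$ with $d' < d^*$. Hence
\[
P_i(\hb) := \bigcoeff{\G(\al_i,\hb,q)}_{q;d^*}
\]
is a finite Laurent polynomial in~$\hb$, and the assumption $\F \cong \F' \pmod{\hb^{-2}}$ forces $P_i(\hb) \in \hb^{-2}\Q_\al[\hb^{-1}]$, i.e.\ $P_i$ has only powers $\hb^{-2},\hb^{-3},\ldots$.

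Now invoke self-polynomiality. Writing $\F = \F' + \G$ in the bilinear expression \eqref{PhiZdfn_e} and using the inductive hypothesis to annihilate every cross-term at $q$-degree~$d^*$ except those pairing $\coeff{\G}_{q;d^*}$ with the $q^0$-constants $c_i := \bigcoeff{\F(\al_i,\hb,q)}_{q;0} = \bigcoeff{\F'(\al_i,\hb,q)}_{q;0} \in \Q_\al^*$, one obtains
\[
\bigcoeff{\Phi_\G}_{q;d^*}(\hb,z) = \sum_{i=1}^n \frac{\lr\a\,\al_i^{\ell(\a)}\,c_i\,\ne^{\al_i z}}{\prod_{k\ne i}(\al_i - \al_k)}\,\bigl(P_i(-\hb) + \ne^{d^*\hb z}\,P_i(\hb)\bigr).
\]
By self-polynomiality of $\F$ and~$\F'$, this lies in $\Q_\al[\hb][[z]]$, so its negative-$\hb$ part must vanish identically in~$z$.

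The final task is to deduce $P_i \equiv 0$ from this polynomiality. Extracting the $z^k$-coefficient for each $k \ge 0$ and applying the binomial expansion of $(\al_i + d^*\hb)^k$, the vanishing of negative powers of~$\hb$ yields, for each~$k$, a linear relation among the coefficients $p_{i,r}$ of $P_i(\hb) = \sum_{r \le -2} p_{i,r}\hb^r$. Running $k$ through $0,1,2,\ldots$ and using the Lagrange-type partial-fractions identity on~$\Pn$ to disentangle the $n$-fold sum over~$i$, one forces $p_{i,r} = 0$ for every~$i$ and~$r$ by induction on~$|r|$, starting with the most singular power of~$\hb$. This closes the induction; the detailed computation is recorded as \cite[Lemma~30.3.2]{MirSym}. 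The main obstacle is precisely this last step: the two terms $P_i(-\hb)$ and $\ne^{d^*\hb z}P_i(\hb)$ couple different $\hb^{-r}$ contributions at every order in~$z$, so converting the resulting family of constraints into a solvable triangular system for the~$p_{i,r}$ requires delicate bookkeeping of both the $i$- and $r$-indices.
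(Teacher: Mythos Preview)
The paper does not supply its own proof of this proposition; it is stated with a citation to \cite[Lemma~30.3.2]{MirSym}, and your outline is precisely the standard argument found there: induct on the $q$-degree, use $C$-recursivity to reduce $\bigcoeff{\G(\al_i,\hb,q)}_{q;d^*}$ to a finite Laurent polynomial $P_i(\hb)$ supported in $\hb^{-2}\Q_\al[\hb^{-1}]$, and then exploit self-polynomiality of $\Phi_\F-\Phi_{\F'}$ to force $P_i=0$.

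Two small remarks. First, a notational slip: what you call $\bigcoeff{\Phi_\G}_{q;d^*}$ is not literally that (the assignment $\F\mapsto\Phi_\F$ in \eqref{PhiZdfn_e} is quadratic, not linear); it is $\bigcoeff{\Phi_\F-\Phi_{\F'}}_{q;d^*}$, and your displayed formula is correct for the latter. Second, your description of the final triangularization as ``induction on $|r|$, starting with the most singular power'' is slightly off: when the maximal depth $N$ is odd, the coefficient of $\hb^{-N}$ in $P_i(-\hb)+\ne^{d^*\hb z}P_i(\hb)$ is $((-1)^N+1)p_{i,N}=0$ identically, so the $s=N$ constraint is vacuous. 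The actual argument uses the linear independence of $\{\ne^{\al_iz},\,z\ne^{\al_iz}\}_i$ at level $s=N-1$ to extract both $p_{i,N}$ and $p_{i,N-1}$ simultaneously (or, equivalently, handles two consecutive powers of $\hb^{-1}$ at a time). Since you explicitly defer this step to the reference, this does not constitute a gap in your proposal.
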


\noindent
Let
\BE{Cdfn_e}
\fC_i^j(d)\equiv  \frac{
\prod\limits_{a_k>0}\prod\limits_{r=1}^{a_kd}\!\!\left(a_k\al_i+r\,\frac{\al_j-\al_i}{d}\right)
\prod\limits_{a_k<0}\!\!\prod\limits_{r=0}^{-a_kd-1}\!\!\left(a_k\al_i-r\,\frac{\al_j-\al_i}{d}\right)}
{d\underset{(r,k)\neq(d,j)}{\prod\limits_{r=1}^d\prod\limits_{k=1}^n}
            \left(\al_i-\al_k+r\,\frac{\al_j-\al_i}{d}\right)}\in\Q_{\al} \,.\EE

\begin{lmm}\label{cYrec_lmm}
If $l\!\in\!\Z^{\ge0}$, $n\!\in\!\Z^+$, and $\a\!\in\!(\Z^*)^l$
are such that $|\a|\!\le\!n$, the power series $\cY_{n;\a}(\x,\hb,q)$
given by~\e_ref{cYdfn_e}
is $\fC$-recursive, with the auxiliary coefficients
in the recursion~\e_ref{recurdfn_e2} for~$\cY_{n;\a}$ given~by
\BE{cYird_e}
\sum_{d=0}^{\i}\cY_i^r(d)q^d=
\begin{cases}
\Rs{h=0}\big\{\hb^{-r-1}\cY_{n;\a}(\al_i,\hb,q)\big\},& \hbox{if}~r\!<\!0;\\
I_{n;\a}(q),& \hbox{if}~r\!=\!0;\\
0,&\hbox{if}~r\!>\!0.
\end{cases}\EE
Furthermore, $\cY_{n;\a}(\x,\hb,q)$ satisfies the self-polynomiality condition.
\end{lmm}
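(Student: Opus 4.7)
The plan is to establish all three assertions by direct manipulation of the explicit hypergeometric formula \eqref{cYdfn_e}. For $\fC$-recursivity, fix $d^*$ and $i$ and examine the rational function $\coeff{\cY_{n;\a}(\al_i,\hb,q)}_{q;d^*}$ of~$\hb$. For generic $\al_1,\ldots,\al_n$ its only finite poles are simple poles at $\hb_0 = (\al_j-\al_i)/d$ with $1\!\le\!d\!\le\!d^*$ and $j\!\neq\!i$, each arising from a single denominator factor $(\al_i-\al_j+d\hb)$. At such a pole the identity
$$a_k\al_i + s\hb_0 = a_k\al_j + (s - a_kd)\hb_0$$
splits every numerator product $\prod_s(a_k\al_i+s\hb)$ into a piece over $s=1,\ldots,a_kd$ and one over $s=a_kd+1,\ldots,a_kd^*$; the latter, together with the analogously split denominator, reassembles to $\coeff{\cY_{n;\a}(\al_j,\hb_0,q)}_{q;d^*-d}$, while the former assembles to $\fC_i^j(d)$ as in \eqref{Cdfn_e}. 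This yields
$$\Rs{\hb=\hb_0}\!\coeff{\cY_{n;\a}(\al_i,\hb,q)}_{q;d^*} = \fC_i^j(d)\cdot \coeff{\cY_{n;\a}(\al_j,\hb_0,q)}_{q;d^*-d},$$
which is \eqref{recurdfn_e} with $C=\fC$.

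To identify the auxiliary coefficients $\cY_i^r(d)$ defined by \eqref{recurdfn_e2}, I would expand both sides around $\hb=0$ and around $\hb=\infty$. Each pole-sum summand $\fC_i^j(d')q^{d'}\cY(\al_j,\hb_0,q)/(\hb-\hb_0)$ is regular at $\hb=0$ and so cannot contribute to negative-power Laurent coefficients there; hence for $r<0$, $\sum_d \cY_i^r(d) q^d$ equals $\Rs{\hb=0}\{\hb^{-r-1}\cY_{n;\a}(\al_i,\hb,q)\}$ as claimed. The condition $|\a|\!\le\!n$ forces each coefficient $\coeff{\cY_{n;\a}(\al_i,\hb,q)}_{q;d}$ to have non-positive degree in~$\hb$ at infinity, and the pole-sum expansion at infinity likewise contains only non-positive powers of~$\hb$; comparison gives $\cY_i^r(d)=0$ for $r>0$. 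For $r=0$, a direct computation of the leading $\hb^0$ term of $\coeff{\cY_{n;\a}(\al_i,\hb,q)}_{q;d}$ at infinity yields $\prod_k (a_kd)!/(d!)^n$ when $|\a|\!=\!n$ and~$0$ when $|\a|\!<\!n$ (for $d\!\ge\!1$), together with the constant term~$1$ at $d\!=\!0$; this matches $I_{n;\a}(q)$.

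For self-polynomiality, I would rewrite $\Phi_{\cY_{n;\a}}$ using the fixed-point formula \eqref{phiprop_e} as an equivariant integral
$$\Phi_{\cY_{n;\a}}(\hb,z,q) = \int_{\Pn}\lr{\a}\,\x^{\ell(\a)}\ne^{\x z}\,\cY_{n;\a}(\x,\hb,q\ne^{\hb z})\,\cY_{n;\a}(\x,-\hb,q),$$
so that, at each order in $z$ and $q$, the integrand is a rational function in~$\x$ that may be analyzed by the Residue Theorem on~$\mathbb{CP}^1$. The residues at $\x=\al_i$ reproduce $\Phi_{\cY_{n;\a}}$; hence it equals minus the sum of residues at $\x=\al_k-r\hb$, $\x=\al_k+r\hb$, and $\x=\infty$. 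A factorization analogous to the one in the first paragraph shows
$$\Rs{\x=\al_k-r\hb}\!\cY_{n;\a}(\x,\hb,q) \,\propto\, q^r\,\cY_{n;\a}(\al_k,\hb,q),$$
and similarly with $\hb\mapsto-\hb$; the substitution $q\mapsto q\ne^{\hb z}$ converts the accompanying $q^r$ into $q^r\ne^{r\hb z}$, which pairs the residues at $\x=\al_k-r\hb$ and $\x=\al_k+r\hb$ into expressions manifestly polynomial in~$\hb$. The bound $|\a|\!\le\!n$ ensures the residue at $\x=\infty$ is also polynomial in~$\hb$, yielding $\Phi_{\cY_{n;\a}}\in\Q_{\al}[\hb][[z,q]]$. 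I expect this self-polynomiality step to be the main obstacle: the $\fC$-recursivity and the identification of $\cY_i^r(d)$ are finite algebraic manipulations with the explicit hypergeometric formula, whereas self-polynomiality requires a careful pairing of an infinite family of residues and relies on the precise combinatorial structure of the hypergeometric numerator under $q\mapsto q\ne^{\hb z}$.
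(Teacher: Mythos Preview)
Your treatment of $\fC$-recursivity and the identification of the auxiliary coefficients $\cY_i^r(d)$ is correct and is essentially the paper's argument: both amount to the partial-fraction decomposition of $\coeff{\cY_{n;\a}(\al_i,\hb,q)}_{q;d}$ as a rational function of $\hb$. The paper packages this via the Cauchy kernel $1/(\hb-z)$ and the Residue Theorem in~$z$ (so that the Laurent-polynomial remainder appears as the residues at $z=0$ and $z=\infty$), while you compute the residues at each $\hb_0=(\al_j-\al_i)/d$ directly and then read off the tails at $\hb=0$ and $\hb=\infty$; these are the same computation.

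Your self-polynomiality argument, however, has a genuine gap. If you regard $\cY_{n;\a}(\x,\hb,q)$ as a rational function of $\x$ and $\hb$ jointly, the $\x$-integrand acquires poles at $\x=\al_k\mp r\hb$ from the two $\cY$-factors, and you must show that their residues sum to something polynomial in~$\hb$. The pairing you propose does not achieve this. At $\x=\al_k-r\hb$ the exponentials indeed combine as $e^{(\al_k-r\hb)z}(e^{\hb z})^r=e^{\al_k z}$, but the remaining factor still involves $\cY_{n;\a}(\al_k-r\hb,-\hb,q)$, a generic rational function of~$\hb$ with no evident relation to the residue at $\x=\al_k+r\hb$, which instead carries $e^{(\al_k+r\hb)z}$ and $\cY_{n;\a}(\al_k+r\hb,\hb,qe^{\hb z})$. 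There is no $\hb\mapsto-\hb$ symmetry linking these two contributions, and nothing forces their sum to be polynomial.

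The paper avoids this entirely by a change of viewpoint: it treats $\cY_{n;\a}$ as an element of $\Q_{\al}[\x]\big[\big[\hb^{-1},q\big]\big]$, expanding each factor $(\x-\al_k+r\hb)^{-1}$ as a formal power series in $\hb^{-1}$ with \emph{polynomial} coefficients in~$\x$. In that ring $\cY_{n;\a}(\x,\hb,q)$ is a polynomial in~$\x$, so the only $\x$-poles of the integrand come from the prefactor $\prod_k(\x-\al_k)^{-1}$, from $\x=0$ (when $\ell(\a)<0$), and from $\x=\infty$. The residue at $\x=0$ receives contributions only from the $q^0$ term (positive-degree terms of $\cY_{n;\a}$ carry a factor $\x^{\ell^-(\a)}$, cancelling the pole) and is therefore independent of~$\hb$; the residue at $\x=\infty$ is computed explicitly via the substitution $w=\x^{-1}$ and is seen term by term, using $|\a|\le n$, to lie in $\Q_{\al}[\hb][[z,q]]$. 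This formal-$\hb^{-1}$ trick, which eliminates the problematic poles at $\al_k\pm r\hb$ altogether, is the idea your outline is missing.
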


\begin{proof}
This is well-known from the various proofs of mirror symmetry for Gromov-Witten invariants
(e.g.~\cite[Section~11]{Gi2}, \cite[Chapter~30]{MirSym}, \cite[Section~4]{Elezi});
we include a proof for the sake of completeness.\\

\noindent
We first view $\cY_{n;\a}$ as an element of $\Q_{\al}(\x,\hb)[[q]]$.
Splitting the coefficient of $q^{d+d'}$ in~\e_ref{cYdfn_e} into the factors
with $r\!\le\!d$ and $r\!>\!d$, plugging in $(\al_j\!-\!\al_i)/d$ into
all factors other than the $(r,k)\!=\!(d,j)$ factor in the denominator,
and simplifying, we obtain
$$\Rs{z=\frac{\al_j-\al_i}{d}}\bigg\{\frac{1}{\hb\!-\!z}\cY_{n;\a}(\al_i,z,q)\bigg\}
=\frac{\fC_i^j(d)q^d}{\hb-\frac{\al_j-\al_i}{d}}
\, \cY_{n;\a}\big(\al_j,(\al_j\!-\!\al_i)/d,q\big)\,.$$
By the Residue Theorem on~$S^2$,
\begin{equation}\label{recurpf_e1}\begin{split}
\sum_{d=1}^{\i}\sum_{j\neq i}\frac{\fC_i^j(d)q^d}{\hb-\frac{\al_j-\al_i}{d}}
&\cY_{n;\a}\big(\al_j,(\al_j\!-\!\al_i)/d,q\big)
=-\Rs{z=\hb,0,\i}\bigg\{\frac{1}{\hb\!-\!z}\cY_{n;\a}(\al_i,z,q)\bigg\}\\
&\qquad\qquad\qquad\qquad
=\cY_{n;\a}(\al_i,\hb,q)-\Rs{z=0,\i}\bigg\{\frac{1}{\hb\!-\!z}\cY_{n;\a}(\al_i,z,q)\bigg\}.
\end{split}\end{equation}
Since the coefficients of $(\hb^{-1})^0$ in $\cY_{n;\a}(\al_i,\hb,q)$
and $Y_{n;\a}(\al_i,\hb,q)$ are the same,
$$\Rs{z=\i}\bigg\{\frac{1}{\hb\!-\!z}\cY_{n;\a}(\al_i,z,q)\bigg\}
=I_{n;\a}(q)$$
by~\e_ref{Idfn_e}.
Since the coefficient of $q^d$ in $\cY_{n;\a}(\al_i,\hb,q)$ has a pole of order~$d$
at $\hb\!=\!0$,
\begin{alignat*}{1}
\Rs{z=0}\bigg\{\frac{1}{\hb\!-\!z}\coeff{\cY_{n;\a}(\al_i,z,q)}_{q;d}\bigg\}
&=\left\llbracket\frac{1}{\hb\!-\!z}
\frac{\prod\limits_{a_k>0}\prod\limits_{r=1}^{a_kd}\!\!(a_k\al_i\!+\!rz)
\prod\limits_{a_k<0}\!\!\prod\limits_{r=0}^{-a_kd-1}\!\!(a_k\al_i\!-\!rz)}
{d!\prod\limits_{r=1}^{d}\prod\limits_{k\neq i}(\al_i\!-\!\al_k\!+\!rz)}
\right\rrbracket_{z;d-1}\,;
\end{alignat*}
the last expression is a polynomial in $\hb^{-1}$ with coefficients in $\Q_{\al}$
of degree at most~$d$.
This establishes that $\cY_{n;\a}$ is $\fC$-recursive and \e_ref{cYird_e} holds;
the $r\!<\!0$ case of~\e_ref{cYird_e} follows from~\e_ref{recurdfn_e2}
with $\F$ replaced by~$\cY_{n;\a}$.\\

\noindent
We now expand $\cY_{n;\a}$ as a power series in $\hb^{-1}$ and $q$
with coefficients in~$\Q_{\al}[\x]$.
Thus,
$$\frac{\lr{\a}\al_i^{\ell(\a)}\ne^{\x z}}{\prod\limits_{k=1}^n(\x\!-\!\al_k)}
\cY_{n;\a}\big(\x,\hb,q\ne^{\hb z}\big)\cY_{n;\a}(-\x,\hb,q)
\in \Q_{\al}(\x)[[\hb^{-1},z,q]]$$
viewed as a function of $\x$ has residues only at $\x\!=\!\al_i$ with $i\!\in\![n]$
and $\x\!=\!\i$.
By \e_ref{cYdfn_e},
$$\frac{\lr{\a}\al_i^{\ell(\a)}\ne^{\al_iz}}{\prod\limits_{k\neq i}(\al_i\!-\!\al_k)}
\cY_{n;\a}\big(\al_i,\hb,q\ne^{\hb z}\big)\cY_{n;\a}(\al_i,-\hb,q)
=\Rs{\x=\al_i}\left\{\frac{\lr{\a}\x^{\ell(\a)}\ne^{\x z}}{\prod\limits_{k=1}^n(\x\!-\!\al_k)}
\cY_{n;\a}\big(\x,\hb,q\ne^{\hb z}\big)\cY_{n;\a}(\x,-\hb,q)\right\}.$$
Thus, by the Residue Theorem on~$S^2$,
$$\Phi_{\cY_{n;\a}}(\hb,z,q) =-
\Rs{\x=0,\i}\left\{\frac{\lr{\a}\x^{\ell(\a)}\ne^{\x z}}{\prod\limits_{k=1}^n(\x\!-\!\al_k)}
\cY_{n;\a}\big(\x,\hb,q\ne^{\hb z}\big)\cY_{n;\a}(\x,-\hb,q)\right\}
\equiv -\fR_0-\fR_{\i}\,.$$
Since the coefficients of positive powers of $q$ in $\cY_{n;\a}$ are divisible by $\x^{\ell^-(\a)}$,
$$\fR_0=\lr\a
\left\llbracket \frac{\ne^{\x z}}{\prod\limits_{k=1}^n(\x\!-\!\al_k)}
\right\rrbracket_{\x;-\ell(\a)-1}\in\Q_{\al}[z]
\subset\Q_{\al}[\hb]\big[\big[z,q\big]\big].$$
The residue $\fR_{\i}$ is computed by replacing $\x$ with $1/w$ and simplifying.
Since the coefficient of~$q^d$ in $\cY_{n;\a}(1/w,\hb,q^d)$ vanishes to
order $(n\!-\!|\a|)d$ at $w\!=\!0$, a direct computation gives
\begin{equation*}\begin{split}
&-\fR_{\i}=\lr\a\sum_{d_1,d_2=0}^{\i}\sum_{p=0}^{\i}
\frac{z^{n-1-\ell(\a)+p+(n-|\a|)(d_1+d_2)}}
{(n\!-\!1\!-\!\ell(\a)+p+(n\!-\!|\a|)(d_1\!+\!d_2))!}
q^{d_1+d_2}\ne^{\hb d_1z}\\
&\qquad\times\left\llbracket
\frac{\prod\limits_{a_k>0}\prod\limits_{r=1}^{a_kd_1}\!(a_k\!+\!r\hb w)
\prod\limits_{a_k<0}\!\!\!\!\prod\limits_{r=0}^{-a_kd_1-1}\!\!\!\!\!\!(a_k\!-\!r\hb w)
\cdot \prod\limits_{a_k>0}\!\prod\limits_{r=1}^{a_kd_2}\!(a_k\!-\!r\hb w)
\prod\limits_{a_k<0}\!\!\!\!\prod\limits_{r=0}^{-a_kd_2-1}\!\!\!\!\!\!(a_k\!+\!r\hb w)}
{\prod\limits_{k=1}^n(1\!-\!\al_kw)
\cdot\prod\limits_{r=1}^{d_1}\prod\limits_{k=1}^{n}(1\!-\!(\al_k\!-\!r\hb)w)\prod\limits_{r=1}^{d_2}\prod\limits_{k=1}^{n}(1\!-\!(\al_k\!+\!r\hb)w)}\right\rrbracket_{w;p}.
\end{split}\end{equation*}
The $(d_1,d_2,p)$-summand above is $q^{d_1\!+\!d_2}$\!
times an element of $\Q_{\al}[\hb][[z]]$.
\end{proof}

\noindent
In the case of  products of projective spaces and concavex sheaves~\e_ref{gensheaf_e},
Definition~\ref{recur_dfn} becomes inductive on the total degree
$d_1\!+\!\ldots\!+\!d_p$ of $q_1^{d_1}\!\ldots\!q_p^{d_p}$.
The power series~$\F$ is evaluated at $(\x_1,\ldots,\x_p)\!=\!(\al_{1;i_1},\ldots,\al_{p;i_p})$
for the purposes of the $C$-recursivity condition~\e_ref{recurdfn_e}
and~\e_ref{recurdfn_e2}.
The relevant primary structure coefficients are of the form
$$\fC_{i_1\ldots i_p}^j(s;d)\equiv  \frac{
\prod\limits_{a_{k;1}\ge0}
\prod\limits_{r=1}^{a_{k;s}d}\!\!\left(
 \sum\limits_{t=1}^p a_{k;t}\al_{t;i_t}+r\,\frac{\al_{s;j}-\al_{s;i_s}}{d}\right)
\prod\limits_{a_{k;1}<0}\!\!
\prod\limits_{r=0}^{-a_{k;s}d-1}\!\!\left(\sum\limits_{t=1}^p a_{k;t}\al_{t;i_t}
-r\,\frac{\al_{s;j}-\al_{s;i_s}}{d}\right)}
{d\underset{(r,k)\neq(d,j)}{\prod\limits_{r=1}^d\prod\limits_{k=1}^{n_s}}
            \left(\al_{s;i_s}-\al_{s;k}+r\,\frac{\al_{s;j}-\al_{s;i_s}}{d}\right)}$$
with $s\!\in\![p]$ and $j\!\neq\!i_s$.
The double sums in these equations are then replaced by triple sums over $s\!\in\![p]$,
$j\!\in\![n_s]\!-\!i_s$, and $d\!\in\!\Z^+$, and with $\F$ evaluated at
$$\x_t=\begin{cases}
\al_{s;j},&\hbox{if}~t\!=\!s;\\
\al_{t;i_t},&\hbox{if}~t\!\neq\!s;
\end{cases} \qquad
z=\frac{\al_{s;j}\!-\!\al_{s;i_s}}{d}.$$
The secondary coefficients~$\F_i^r(d)$ in~\e_ref{recurdfn_e2} now become
$\F_{i_1\ldots i_p}^r(d_1,\ldots,d_p)$, with $i_s\!\in\![n_s]$ and $d_s\!\in\!\Z^{\ge0}$.
In the analogue of Definition~\ref{SPC_dfn},
$\Phi_{\F}$  is a power series in $z_1,\ldots,z_p$
and $q_1,\ldots,q_p$, the sum taken is over all elements $(i_1,\ldots,i_p)$
of $[n_1]\!\times\!\ldots\!\times\![n_p]$, the leading fraction is replaced~by
$$\frac{\prod\limits_{a_{k;1}\ge0}\sum\limits_{s=1}^pa_{k;s}\al_{s;i_s}}
{\prod\limits_{a_{k;1}<0}\sum\limits_{s=1}^pa_{k;s}\al_{s;i_s}}
\cdot\frac{\ne^{\al_{1;i_1}z_1+\ldots+\al_{p;i_p}z_p}}
{\prod\limits_{s=1}^p\prod\limits_{k\neq i_s}(\al_{s;i_s}\!-\!\al_{s;k})}\,,$$
and the $q\ne^{\hb z}$-insertion in the first power series is replaced
by the insertions $q_1\ne^{\hb z_1},\ldots,q_p\ne^{\hb z_p}$.
The conclusion of Lemma~\ref{cYrec_lmm} holds with $i$, $d$, and $q^d$ replaced by
$(i_1,\ldots,i_p)$, $(d_1,\ldots,d_p)$, and $q_1^{d_1}\ldots q_p^{d_p}$, respectively.
The proof is nearly identical, except the last claim involves $p$~applications
of the Residue Theorem on~$S^2$.
Instead of the residue at $\x\!=\!0$ of the coefficient of~$q^0$,
there may be a residue at a value of $\x_s$ dependent on the values of the other variables~$\x_t$,
but it again would not involve~$\hb$.

\section{Recursivity for stable quotients}
\label{SQlocal_sec}

\noindent
In this section, we use the classical localization theorem~\cite{ABo}
to show that the equivariant stable quotients analogue of Givental's
$J$-function, the power series~$\cZ_{n;\a}$ given~by \e_ref{cZdfn_e},
is $\fC$-recursive with the collection~$\fC_i^j(d)$ given by~\e_ref{Cdfn_e}.
We also describe the secondary terms~$\cZ_i^r(d)$ in
the recursion~\e_ref{recurdfn_e2} for~$\cZ_{n;\a}$, establishing the following
statement.

\begin{prp}\label{cZrec_prp}
If $l\!\in\!\Z^{\ge0}$, $n\!\in\!\Z^+$, and $\a\!\in\!(\Z^*)^l$,
the power series $\cZ_{n;\a}(\x,\hb,q)$ is $\fC$-recursive, with the auxiliary coefficients
in the recursion~\e_ref{recurdfn_e2} for~$\cZ_{n;\a}$ given by
$$\cZ_i^r(d)=0\quad\forall~r\in\Z^+, \qquad
\cZ_i^0(d)=\de_{0d},$$
and for all $r\!\in\!\Z^-$
$$\sum_{d=1}^{\i}\cZ_i^r(d)q^d=
\sum_{d=1}^{\i}\frac{q^{d}}{d!}\sum_{b=0}^{d+r}
\Bigg(\bigg(\int_{\ov\cM_{0,2|d}}\frac{\E(\dot\V_{\a}^{(d)}(\al_i))\psi_1^{-r-1}\psi_2^b}
{\prod\limits_{k\neq i}\E(\dot\V_1^{(d)}(\al_i\!-\!\al_k))}\bigg)
\Rs{\hb=0}\bigg\{\frac{(-1)^b}{\hb^{b+1}}\cZ_{n;\a}(\al_i,\hb,q)\bigg\}\Bigg).$$
\end{prp}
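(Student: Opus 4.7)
I would compute $\cZ_{n;\a}(\al_i,\hb,q)$ by applying the Atiyah--Bott localization theorem \cite{ABo} to the push-forward defining~\e_ref{cZdfn_e} and then read off both $\fC$-recursivity and the secondary terms from the resulting sum over $\T$-fixed loci. The $\T$-fixed loci of $\ov{Q}_{0,2}(\Pn,d)\cap\ev_1^{-1}(P_i)$ are indexed by decorated trees: each vertex $v$ is mapped to some fixed point $P_{\mu(v)}$ and carries a contracted component parametrized by $\ov\cM_{0,m_v|d_v}$, each edge is a degree-$d_e$ rational chain covering some line $P_{\mu(v)}P_{\mu(v')}$, and the vertex carrying $y_1$ is mapped to $P_i$; each fixed locus is a finite quotient of $\prod_v\ov\cM_{0,m_v|d_v}$.

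The localization sum splits into an ``edge at $y_1$'' part and a ``vertex at $y_1$'' part. In the edge-at-$y_1$ case, $y_1$ is a node joining a central vertex at $P_i$ to an outgoing chain of degree $d_e$ reaching $P_j$, with the rest of the tree lying over $P_j$. A direct computation, parallel to \cite[Chapter~27]{MirSym}, shows that the combination of $\E(\dot\V_{n;\a}^{(d_e)})$ along the chain with the normal-bundle contribution at its two nodes equals $\fC_i^j(d_e)$, that the insertion $1/(\hb-\psi_1)$ localizes to $1/(\hb-(\al_j-\al_i)/d_e)$, and that the remainder of the tree reassembles, by the same localization recipe applied to $\cZ_{n;\a}(\al_j,z,q)$, into $\cZ_{n;\a}(\al_j,(\al_j-\al_i)/d_e,q)$. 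Summing over $(d_e,j)$ yields the recursive kernel of~\e_ref{recurdfn_e2}, establishing $\fC$-recursivity.

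In the vertex-at-$y_1$ case, $y_1$ lies on a contracted $\ov\cM_{0,2|d}$-component at $P_i$ with $d$ flecks; the $\bS_d$-quotient in~\e_ref{curvquot_e} supplies the $1/d!$ factor, the ratio $\E(\dot\V_{\a}^{(d)}(\al_i))/\prod_{k\neq i}\E(\dot\V_1^{(d)}(\al_i-\al_k))$ arises from restricting $\E(\dot\V_{n;\a}^{(d)})$ and dividing by the equivariant Euler class of the normal bundle to the central vertex, and the expansion $1/(\hb-\psi_1)=\sum_{k\ge 0}\psi_1^k\hb^{-k-1}$ produces the $\psi_1^{-r-1}$ factor for the coefficient of $\hb^r$ with $r<0$. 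Summing over all sub-trees attached at $y_2$ of this central vertex reassembles a copy of $\cZ_{n;\a}(\al_i,\hb',q)$ evaluated at the smoothing parameter $\hb'$ of that node; re-expanding this copy against $1/(\hb'-\psi_2)$ and rewriting the result as a residue in $\hb$ produces the expression $\Rs{\hb=0}\{(-1)^b\hb^{-b-1}\cZ_{n;\a}(\al_i,\hb,q)\}$ in the statement, with the $\psi_2^b$ insertion and the dimensional range $0\le b\le d+r$ coming from this expansion against the $d-1$-dimensional space $\ov\cM_{0,2|d}$.

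The principal obstacle is the careful bookkeeping of vertex and normal-bundle contributions. One must verify that on each vertex factor $\ov\cM_{0,m_v|d_v}$ both the virtual class and $\E(\dot\V_{n;\a}^{(d)})$ decompose correctly into central and edge pieces, and that the normal bundle to the central fixed locus indeed contributes the denominator $\prod_{k\neq i}\E(\dot\V_1^{(d)}(\al_i-\al_k))$, which encodes the deformations of the flecks in the $n-1$ directions transverse to $P_i$. Showing that the sum over sub-tree configurations at $y_2$ reconstitutes exactly one copy of $\cZ_{n;\a}(\al_i,\hb',q)$, with the correct sign and smoothing factor (and hence a single residue rather than an iterated one), is the key non-formal step; it depends on the tree structure of the decorated graphs together with the one-point nature of the $1/(\hb-\psi_1)$ insertion in~\e_ref{cZdfn_e}.
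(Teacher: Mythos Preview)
Your plan follows the same localization strategy as the paper, and your split into ``edge at $y_1$'' versus ``vertex at $y_1$'' contributions matches the paper's dichotomy $\d(v_{\min})=0$ versus $\d(v_{\min})>0$. One simplification you miss: because $m=2$ and the genus is $0$, the decorated graphs are \emph{strands} (chains), not general trees, with the two marked points at opposite ends; this makes the bookkeeping considerably lighter than you suggest.

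There is, however, a real gap at what you yourself flag as the ``key non-formal step.'' You write that summing over sub-trees attached at $y_2$ of the central vertex ``reassembles a copy of $\cZ_{n;\a}(\al_i,\hb',q)$ evaluated at the smoothing parameter~$\hb'$,'' and that ``rewriting the result as a residue'' then produces the desired $\Rs{\hb=0}$ term. This is not how it works. There is no single smoothing parameter: for each choice of first edge leaving the central vertex (to $P_j$, degree $d$), the tail sum assembles into $\fC_i^j(d)q^d\,\cZ_{n;\a}(\al_j,(\al_j\!-\!\al_i)/d,q)$, evaluated at $\al_j$ rather than at $\al_i$. The passage to an expression in $\cZ_{n;\a}(\al_i,\cdot,q)$ and then to a residue at $\hb=0$ requires an actual argument: by the $\fC$-recursivity already established in the edge case, each such tail contribution equals the residue of $\hb^{-(b+1)}\cZ_{n;\a}(\al_i,\hb,q)$ at the finite pole $\hb=(\al_j\!-\!\al_i)/d$; summing over all $(j,d)$ and applying the Residue Theorem on $S^2$ converts the total into $-\Rs{\hb=0}-\Rs{\hb=\i}$. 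The residue at $\hb=\i$ contributes a $\de_{0b}$ term which combines with the single-vertex (no-tail) strand to give exactly the $b=0$ summand. This Residue-Theorem maneuver, which feeds the recursion just proven back into the vertex computation, is the genuine mechanism behind the formula and is not reducible to expanding $1/(\hb'-\psi_2)$.
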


\noindent
The proof involves a localization computation on $\ov{Q}_{0,2}(\Pn,d)$.
Thus, we need to describe the fixed loci of the $\T$-action on $\ov{Q}_{0,2}(\Pn,d)$,
their normal bundles, and the restrictions of the relevant cohomology classes
to these fixed loci.\\

\noindent
As in the case of stable maps described in \cite[Section~27.3]{MirSym},
the fixed loci of the $\T$-action on $\ov{Q}_{0,m}(\P^{n-1},d)$
are indexed by connected \sf{decorated graphs} that have no loops.
However, in
the case $m\!=\!2$, the relevant graphs consist of a single \sf{strand} (possibly consisting
of a single vertex)
with the two marked points attached at the opposite ends of the strand.
Such a graph can be described by an ordered set $(\Ver,<)$ of vertices,
where $<$ is a strict order on the finite set~$\Ver$.
Given such a strand, denote by $v_{\min}$ and $v_{\max}$ its minimal and maximal elements
and by
$\Edg$ its set of~\sf{edges},
i.e.~of pairs of consecutive elements.
A \sf{decorated strand} is a tuple
\BE{decortgraphdfn_e} \Ga = \big(\Ver,<;\mu,\d\big),\EE
where $(\Ver,<)$ is a strand as above and
$$\mu\!:\Ver\lra [n] \qquad\hbox{and}\qquad \d\!: \Ver\!\sqcup\!\Edg\lra\Z^{\ge0}$$
are maps such that
\BE{decorgraphcond_e}
\mu(v_1)\neq\mu(v_2)  ~~~\hbox{if}~~ \{v_1,v_2\}\in\Edg, \qquad
\d(e)\neq0~~\forall\,e\!\in\!\Edg.\EE
In Figure~\ref{loopgraph_fig}, the vertices of a decorated strand $\Ga$
are indicated by dots in the increasing order, with respect to~$<$, from left to right.
The values of the map $(\mu,\d)$ on some of the vertices
are indicated next to those vertices.
Similarly, the values of the map $\d$ on some of the edges are indicated next to~them.
By~\e_ref{decorgraphcond_e}, no two consecutive vertices have the same first label
and thus $j\!\neq\!i$.\\

\noindent
With $\Ga$ as in~\e_ref{decortgraphdfn_e}, let
$$|\Ga|\equiv\sum_{v\in\Ver}\!\!\d(v)+\sum_{e\in\Edg}\!\!\d(e)$$
be \textsf{the degree of~$\Ga$}.
If $e\!=\!\{v_1,v_2\}\!\in\!\Edg$ is any edge in $\Ga$, let
$\Ga_e$ denote the single-edge graph with vertices $v_1$ and~$v_2$,
which are ordered in the same way as in~$\Ga$
and assigned values $(\mu(v_1),0)$ and $(\mu(v_2),0)$, and with the
edge assigned the value~$\d(e)$ as in the original graph;
see Figure~\ref{subgraphs_fig}.\\

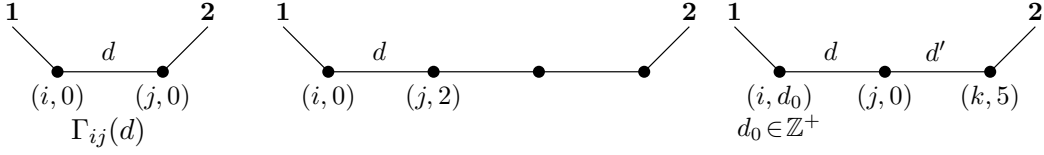
\begin{figure}
\begin{pspicture}(-.7,-1)(10,1.2)
\psset{unit=.4cm}
\psline[linewidth=.04](2.5,0)(6,0)\rput(4.2,.7){\smsize{$d$}}
\pscircle*(2.5,0){.2}\rput(2.5,-.85){\smsize{$(i,0)$}}
\pscircle*(6,0){.2}\rput(6,-.85){\smsize{$(j,0)$}}
\psline[linewidth=.04](2.5,0)(1,1.5)\rput(1,2){\smsize{$\bf 1$}}
\psline[linewidth=.04](6,0)(7.5,1.5)\rput(7.5,2){\smsize{$\bf 2$}}
\rput(4.2,-2){$\Ga_{ij}(d)$}
\psline[linewidth=.04](11.5,0)(15,0)\rput(13.2,.7){\smsize{$d$}}
\psline[linewidth=.04](15,0)(18.5,0)\pscircle*(18.5,0){.2}
\psline[linewidth=.04](18.5,0)(22,0)\pscircle*(22,0){.2}
\pscircle*(11.5,0){.2}\rput(11.5,-.85){\smsize{$(i,0)$}}
\pscircle*(15,0){.2}\rput(15,-.85){\smsize{$(j,2)$}}
\psline[linewidth=.04](11.5,0)(10,1.5)\rput(10,2){\smsize{$\bf 1$}}
\psline[linewidth=.04](22,0)(23.5,1.5)\rput(23.5,2){\smsize{$\bf 2$}}
\psline[linewidth=.04](26.5,0)(30,0)\pscircle*(30,0){.2}
\rput(28.2,.7){\smsize{$d$}}\rput(31.7,.7){\smsize{$d'$}}
\pscircle*(26.5,0){.2}\rput(26.5,-.85){\smsize{$(i,d_0)$}}
\rput(26.5,-1.85){\smsize{$d_0\!\in\!\Z^+$}}
\rput(30,-.85){\smsize{$(j,0)$}}
\rput(33.5,-.85){\smsize{$(k,5)$}}
\psline[linewidth=.04](30,0)(33.5,0)\pscircle*(33.5,0){.2}
\psline[linewidth=.04](26.5,0)(25,1.5)\rput(25,2){\smsize{$\bf 1$}}
\psline[linewidth=.04](33.5,0)(35,1.5)\rput(35,2){\smsize{$\bf 2$}}
\end{pspicture}
\caption{Two strands with $\d(v_{\min})\!=\!0$ and a strand with $\d(v_{\min})\!>\!0$}
\label{loopgraph_fig}
\end{figure}

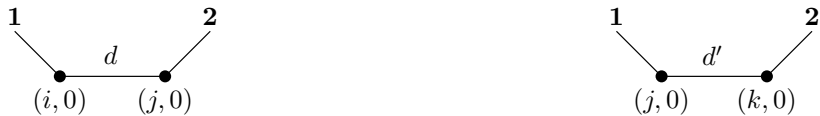
\begin{figure}
\begin{pspicture}(-2.5,-.6)(10,1)
\psset{unit=.4cm}
\psline[linewidth=.04](2.5,0)(6,0)\rput(4.2,.7){\smsize{$d$}}
\pscircle*(2.5,0){.2}\rput(2.5,-.85){\smsize{$(i,0)$}}
\pscircle*(6,0){.2}\rput(6,-.85){\smsize{$(j,0)$}}
\psline[linewidth=.04](2.5,0)(1,1.5)\rput(1,2){\smsize{$\bf 1$}}
\psline[linewidth=.04](6,0)(7.5,1.5)\rput(7.5,2){\smsize{$\bf 2$}}
\psline[linewidth=.04](22.5,0)(26,0)\rput(24.2,.7){\smsize{$d'$}}
\pscircle*(22.5,0){.2}\rput(22.5,-.85){\smsize{$(j,0)$}}
\pscircle*(26,0){.2}\rput(26,-.85){\smsize{$(k,0)$}}
\psline[linewidth=.04](22.5,0)(21,1.5)\rput(21,2){\smsize{$\bf 1$}}
\psline[linewidth=.04](26,0)(27.5,1.5)\rput(27.5,2){\smsize{$\bf 2$}}
\end{pspicture}
\caption{The sub-strands corresponding to the edges of the last graph in Figure~\ref{loopgraph_fig}.}
\label{subgraphs_fig}
\end{figure}

\noindent
As described in \cite[Section~7.3]{MOP09},
the fixed locus $Q_{\Ga}$ of $\ov{Q}_{0,2}(\P^{n-1},|\Ga|)$ corresponding to a
decorated strand $\Ga$ consists of the stable quotients
$$(\cC,y_1,y_2,S\subset \C^n\!\otimes\!\cO_{\cC})$$
over quasi-stable rational 2-marked curves that satisfy the following conditions.
The components of $\cC$ on which the corresponding quotient is torsion-free
are rational and correspond to the edges of~$\Ga$;
the restriction of~$S$ to any such component corresponds
to a morphism to~$\P^{n-1}$ of the opposite degree to that of the subsheaf.
Furthermore, if $e\!=\!\{v_1,v_2\}$ is an edge, the corresponding morphism~$f_e$
is a degree-$\d(e)$ cover of the line
$$\P^1_{\mu(v_1),\mu(v_2)}\subset\P^{n-1}$$
passing through the fixed points $P_{\mu(v_1)}$ and $P_{\mu(v_2)}$;
it is ramified only over $P_{\mu(v_1)}$ and~$P_{\mu(v_2)}$.
In particular, $f_e$ is unique up to isomorphism.
The remaining components of $\cC$ are
indexed by the vertices $v\!\in\!\Ver$ with $\d(v)\!\in\!\Z^+$.
The restriction of~$S$ to such a component~$\cC_v$ of~$\cC$ (or possibly a connected union
of irreducible components) is a subsheaf
of the trivial subsheaf $P_{\mu(v)}\!\subset\!\C^n\!\otimes\!\cO_{\cC_v}$
of degree~$-\d(v)$;
thus, the induced morphism takes~$\cC_v$ to the fixed point~$P_{\mu(v)}\!\in\!\Pn$.
Each such component~$\cC_v$ also carries two distinguished marked points, corresponding
to the nodes and/or the marked points of~$\cC$;
if neither of the marked points of~$\cC$ lies on~$\cC_v$, we denote the marked point
corresponding to the node of~$\cC_v$ separating~$\cC_v$ from the first marked point
by~1 and the other marked point by~2.
Thus, as stacks,
\BE{Zlocus_e}\begin{split}
Q_{\Ga}&\approx \prod_{\begin{subarray}{c}v\in\Ver\\ \d(v)>0\end{subarray}}
\!\!\!\ov{Q}_{0,2}(\P^0,\d(v))\times\prod_{e\in\Edg}\!\!Q_{\Ga_e}
\approx  \prod_{\begin{subarray}{c}v\in\Ver\\ \d(v)>0\end{subarray}}
\!\!\!\ov\cM_{0,2|\d(v)}/\bS_{\d(v)}\times\prod_{e\in\Edg}\!\!Q_{\Ga_e}\\
&\approx
\Bigg(\prod_{\begin{subarray}{c}v\in\Ver\\ \d(v)>0\end{subarray}}
\!\!\!\ov\cM_{0,2|\d(v)}/\bS_{\d(v)}\bigg)\Bigg/\prod_{e\in\Edg}\!\!\Z_{\d(e)},
\end{split}\EE
with each cyclic group $\Z_{\d(e)}$ acting trivially.
For example, in the case of the last diagram in Figure~\ref{loopgraph_fig},
$$Q_{\Ga}\approx
\Big(\ov\cM_{0,2|d_0}/\bS_{d_0}\times \ov\cM_{0,2|5}/\bS_5\Big)
\big/\Z_d\!\times\!\Z_{d'}$$
is a fixed locus in $\ov{Q}_{0,2}(\P^{n-1},d_0\!+\!5\!+\!d\!+\!d')$.\\

\noindent
If $\Ga$ is a decorated strand as above and $e\!\!\in\!\Edg$,
let
$$\pi_e\!:Q_{\Ga}\lra Q_{\Ga_e}\subset \ov{Q}_{0,2}(\Pn,\d(e))$$
be the projection in the decomposition~\e_ref{Zlocus_e}.
Similarly, for each $v\!\in\!\Ver$ such that $\d(v)\!>\!0$, let
$$\pi_v\!:Q_{\Ga}\lra\ov\cM_{0,2|\d(v)}/\bS_{\d(v)}$$
be the corresponding projection.
If $e\!=\!\{v_1,v_2\}\!\in\!\Edg$ with $v_1\!<\!v_2$, let
\BE{omdfn_e}\om_{e;v_1}=-\pi_e^*\psi_1,\,\om_{e;v_2}=-\pi_e^*\psi_2,\,
\psi_{v_1;e}=\pi_{v_1}^*\psi_2,\,\psi_{v_2;e}=\pi_{v_2}^*\psi_1
\in H^2(Q_{\Ga})\,.\EE
By \cite[Section~27.2]{MirSym},
\BE{psiform_e} \om_{e;v_i}=\frac{\al_{\mu(v_i)}-\al_{\mu(v_{3-i})}}{\d(e)}\
\qquad i=1,2.\EE
For each $v\!\in\!\Ver-\{v_{\min}\}$, let $e_-(v)\!=\!\{v_-,v\}\!\in\!\Edg$ denote
the edge with $v_-\!<\!v$;
for each $v\!\in\!\Ver\!-\!\{v_{\max}\}$, let $e_+(v)\!=\!\{v,v_+\}\!\in\!\Edg$ denote
the edge with $v\!<\!v_+$.\\

\noindent
By \cite[Section~7.4]{MOP09}, the Euler class of the normal bundle of $Q_{\Ga}$ in
$\ov{Q}_{0,2}(\Pn,|\Ga|)$ is given~by
\BE{NZform_e}\begin{split}
&\frac{\E(\N Q_{\Ga})}{\E(T_{\mu(v_{\min})}\Pn)}
=\prod_{\begin{subarray}{c}v\in\Ver\\ \d(v)>0\end{subarray}}
\prod_{k\neq\mu(v)}\!\!\!\pi_v^*\E\big(\dot\V_1^{(\d(v))}(\al_{\mu(v)}\!-\!\al_k)\big)~
\prod_{e\in\Edg}\!\!\!\pi_e^*\E\big(H^0(f_e^*T\P^n\!\otimes\!\cO(-y_1))/\C\big)\\
&\quad
\times\prod_{\begin{subarray}{c}v\in\Ver-v_{\min}-v_{\max}\\ \d(v)=0\end{subarray}}
\!\!\!\!\!\!\!\!\!\!\!\!\!\!\big(\om_{e_-(v);v}\!+\!\om_{e_+(v);v}\big)
\prod_{\begin{subarray}{c}v\in\Ver-v_{\min}\\ \d(v)>0\end{subarray}}
\!\!\!\!\!\!\!\!\!\big(\om_{e_-(v);v}\!-\!\psi_{v;e_-(v)}\big)
\prod_{\begin{subarray}{c}v\in\Ver-v_{\max}\\ \d(v)>0\end{subarray}}
\!\!\!\!\!\!\!\!\!\big(\om_{e_+(v);v}\!-\!\psi_{v;e_+(v)}\big),
\end{split}\EE
where $\C\!\subset\!H^0(f_e^*T\P^n\!\otimes\!\cO(-y_1))$ is the trivial $\T$-representation
and $\dot\V_1^{(\d(v))}(\al_{\mu(v)}\!-\!\al_k)$ is as in \e_ref{V1dfn_e}.
The terms on the second line in~\e_ref{NZform_e} describe the standard deformations
of the domain; they are given by the direct sum of the tensor products of
the tangent line bundles at the two branches of each node.
The terms on the first line in~\e_ref{NZform_e} correspond to the deformations of
the sheaf without changing the domain~$\cC$;
they are obtained by relating these deformations to the deformations on each components
of~$\cC$ and applying~\e_ref{tangrestr_e} to the deformations over the components~$\cC_v$
corresponding to the vertices.
The first term on the right-hand side of~\e_ref{NZform_e} and the first two terms
on the second line of~\e_ref{NZform_e} are the contributions
of nondegenerate vertices described in \cite[Section~7.4.2]{MOP09}.
The second term on the right-hand side of~\e_ref{NZform_e} is the edge contributions,
which are the same as in Gromov-Witten theory.
Finally, by~\e_ref{Vprdfn_e} and~\e_ref{V0prdfn_e},
\BE{cVform_e}
\E(\dot\V_{n;\a}^{(|\Ga|)})\big|_{Q_{\Ga}}
=\prod_{\begin{subarray}{c}v\in\Ver\\ \d(v)>0\end{subarray}}
\!\!\!\pi_v^*\E\big(\dot\V_{\a}^{(\d(v))}(\al_{\mu(v)})\big)
\cdot \prod_{e\in\Edg}\!\!\!\pi_e^*\E\big(\dot\V_{n;\a}^{\d(e)}\big).\EE

\begin{lmm}\label{edgecontr_lmm}
For every edge $e\!=\!\{v_1,v_2\}$ with $v_1\!<\!v_2$ in $\Ga$ as above,
\BE{EdgContr_e}
\int_{Q_{\Ga_e}}
\frac{\E(\dot\V_{n;\a}^{\d(e)})}{\E\big(H^0(f_e^*T\P^n\!\otimes\!\cO(-y_1))/\C\big)}
=\fC_{\mu(v_1)}^{\mu(v_2)}\big(\d(e)\big)\,,\EE
with $\fC_{\mu(v_1)}^{\mu(v_2)}(\d(e))$  given by~\e_ref{Cdfn_e}.
\end{lmm}

\begin{proof}
Since the edge contributions are the same as in Gromov-Witten theory,
\e_ref{EdgContr_e} is standard;
we recall its derivation for the sake of completeness.
Let $i\!=\!\mu(v_1)$, $j\!=\!\mu(v_2)$, and $d\!=\!\d(e)$.\\

\noindent
By \cite[Exercise~27.2.3]{MirSym},
\BE{H0equiv_e} \E\big(H^0(f_e^*\cO_{\Pn}(a_k))\big)=
\prod_{r=0}^{a_kd}\frac{(a_kd\!-\!r)\al_i+r\al_j}{d}\,
\qquad\forall~a_k\!\in\!\Z^{\ge0}.\EE
Since $\E(\cO_{\Pn}(a_k))|_{P_i}\!=\!a_k\al_i$ and the sequence
$$0\lra H^0\big(f_e^*\cO_{\Pn}(a_k)\!\otimes\!\cO(-y_1)\big)\lra
H^0\big(f_e^*\cO_{\Pn}(a_k)\big)\lra \cO_{\Pn}(a_k)|_{P_i} \lra 0$$
is exact, the product of \e_ref{H0equiv_e} without the $r\!=\!0$ factor
over~$k$ with~$a_k\!>\!0$, i.e.~the first product in the numerator of~\e_ref{Cdfn_e},
is the equivariant Euler class of the first summand in~\e_ref{Vprdfn_e} restricted to~$f_e$.
By~Serre Duality and \cite[Exercises~27.2.2, 27.2.3]{MirSym},
\BE{H1equiv_e} \E\big(H^1(f_e^*\cO_{\Pn}(a_k))\big)=
\prod_{r=1}^{-a_kd-1}\frac{(a_kd\!+\!r)\al_i-r\al_j}{d}\,
\qquad\forall~a_k\!\in\!\Z^-.\EE
Since the sequence
$$0\lra \cO_{\Pn}(a_k)|_{P_i}
\lra H^1\big(f_e^*\cO_{\Pn}(a_k)\!\otimes\!\cO(-y_1)\big)\lra
H^1\big(f_e^*\cO_{\Pn}(a_k)\big)\lra 0$$
is exact,
the product of \e_ref{H1equiv_e} with the extra $r\!=\!0$ factor
over~$k$ with~$a_k\!<\!0$, i.e.~the second product in the numerator of~\e_ref{Cdfn_e},
is the equivariant Euler class of the second summand in~\e_ref{Vprdfn_e} restricted to~$f_e$.
Thus, the numerators in~\e_ref{EdgContr_e} and~\e_ref{Cdfn_e} are the same.\\

\noindent
The denominator  in~\e_ref{EdgContr_e} is computed using the exact sequence
\BE{Tses_e}\begin{split}
0&\lra H^0\big(f_e^*T\P_{i,j}^1\!\otimes\!\cO(-y_1)\big)\big/\C
\lra H^0\big(f_e^*T\P^n\!\otimes\!\cO(-y_1)\big)/\C\\
&\lra \bigoplus_{k\neq i,j}\!
H^0\big(f_e^*\cO_{\Pn}(1)\!\otimes\!\C_{\al_i\!-\!\al_k}\!\otimes\!\cO(-y_1)\big)\lra0,
\end{split}\EE
where $\C_{\al_i\!-\!\al_k}$ is the topologically trivial line bundle with equivariant Euler
class~$\al_i\!-\!\al_k$;
this sequence  is obtained from the equivariant Euler sequence for~$\Pn$.
The equivariant Euler class of each summand on the second line of~\e_ref{Tses_e}
is given by~\e_ref{H0equiv_e} with $a_k\!=\!1$, each factor increased
by $\al_i\!-\!\al_k$ (because of the tensor product with the line bundle~$\C_{\al_i\!-\!\al_k}$),
and the $r\!=\!0$ factor again dropped.
Thus, the equivariant Euler class of the vector space on the second line of~\e_ref{Tses_e}
is the product of the factors in the denominator of~\e_ref{Cdfn_e} with~$k\!\neq\!i,j$.
By \cite[Exercise~27.2.3]{MirSym},
\BE{H0Tequiv_e}
\E\big(H^0(f_e^*T\P_{i,j}^1)\big)
=\prod_{r=0}^{2d}\frac{(d\!-\!r)(\al_i\!-\!\al_j)
+r(\al_j\!-\!\al_i)}{d}\,.\EE
Since $\E(T\P_{i,j}^1)|_{P_i}\!=\!\al_i\!-\!\al_j$
and the sequence
$$0\lra H^0\big(f_e^*T\P_{i,j}^1\!\otimes\!\cO(-y_1)\big)\lra
H^0\big(f_e^*T\P_{i,j}^1\big)\lra   T\P_{i,j}^1|_{P_i} \lra 0$$
is exact, \e_ref{Tses_e} and \e_ref{H0Tequiv_e} give
$$\E\big(H^0(f_e^*T\P_{i,j}^1\!\otimes\!\cO(-y_1))/\C\big)
=\prod_{r=1}^d\frac{r(\al_j\!-\!\al_i)}{d}  \cdot
\prod_{r=1}^{d-1}\frac{r(\al_i\!-\!\al_j)}{d}\,.$$
Thus, the denominator in~\e_ref{EdgContr_e} equals to the product in
the denominator of~\e_ref{Cdfn_e}.
The remaining factor, $d$, in the denominator of~\e_ref{Cdfn_e} accounts for
the automorphism group of~$Q_{\Ga_e}$.
\end{proof}

\noindent
Proposition~\ref{cZrec_prp} is proved by applying
the localization theorem~to
\BE{Zeval_e}\cZ_{n;\a}(x\!=\!\al_i,\hb,q)
=1+\sum_{d=1}^{\i}q^d\int_{\ov{Q}_{0,2}(\Pn,d)}
\frac{\E(\dot\V_{n;\a}^{(d)})\ev_1^*\phi_i}{\hb\!-\!\psi_1}
\in \Q_{\al}\big[\big[\hb^{-1},q\big]\big],\EE
where $\phi_i$ is the equivariant Poincare dual of the fixed point $P_i\!\in\!\Pn$;
see~\e_ref{phidfn_e}, \e_ref{phiprop_e}, and~\e_ref{pushdfn_e}.
Since $\phi_i|_{P_j}\!=\!0$ unless $j\!=\!i$, a decorated strand as in~\e_ref{decortgraphdfn_e}
contributes to~\e_ref{Zeval_e} only if the first marked point is attached
to a vertex labeled~$i$, i.e.~$\mu(v_{\min})\!=\!i$ for the smallest element $v_{\min}\!\in\!\Ver$.
We show that, just as with Givental's $J$-function, the $(d,j)$-summand in~\e_ref{recurdfn_e2}
with $C\!=\!\fC$ and $\F\!=\!\cZ_{n;\a}$, i.e.
$$\frac{\fC_i^j(d)q^d}{\hb-\frac{\al_j-\al_i}{d}}\cZ_{n;\a}(\al_j,(\al_j\!-\!\al_i)/d,q)\,,$$
is the sum over all strands such that $\mu(v_{\min})\!=\!i$, i.e.~the first marked point
is mapped to the fixed point $P_i\!\in\!\Pn$,
$v_{\min}$ is a bivalent vertex,  i.e.~$\d(v_{\min})\!=\!0$,
the only edge leaving this vertex is labeled~$d$, and
the other vertex of this edge is labeled~$j$.
We also show that the first sum on the right-hand side of~\e_ref{recurdfn_e2} is~1
(for the degree~0 term) plus the sum over  all strands such that $\mu(v_{\min})\!=\!i$ and
$\d(v_{\min})\!>\!0$.\\

\noindent
If $\Ga$ is a decorated strand with $\mu(v_{\min})\!=\!i$ as above,
\BE{phirestr_e} \ev_1^*\phi_i\big|_{Q_{\Ga}}
=\prod_{k\neq i}(\al_i\!-\!\al_k)
=\E\big(T_{\mu(v_{\min})}\Pn\big).\EE
Suppose in addition that $\d(v_{\min})\!=\!0$.
Let $v_1\!\equiv\!(v_{\min})_+$ be the immediate successor of $v_{\min}$ in~$\Ga$
and $e_1\!=\!\{v_{\min},v_1\}$ be the edge leaving~$v_{\min}$.
If $|\Edg|\!>\!1$ or $\d(v_1)\!>\!0$
(i.e.~$\Ga$ is not as in the first diagram in Figure~\ref{loopgraph_fig}),
we break~$\Ga$ at $v_1$ into two ``sub-strands":
\begin{enumerate}[label=(\roman*)]
\item $\Ga_1\!=\!\Ga_{e_1}$ consisting of the vertices $v_{\min}\!<\!v_1$,
the edge $\{v_{\min},v_1\}$, and the $\d$-value of~0 at both vertices;
\item $\Ga_2$ consisting all vertices and edges of $\Ga$, other than
the vertex $v_{\min}$ and the edge $\{v_{\min},v_1\}$;
\end{enumerate}
see Figure~\ref{splitgraph_fig}.
By~\e_ref{Zlocus_e},
$$Q_{\Ga}\approx Q_{\Ga_1}\times Q_{\Ga_2}.$$
Let $\pi_1,\pi_2\!:Q_{\Ga}\lra Q_{\Ga_1},Q_{\Ga_2}$ be the two component projection maps.
By~\e_ref{cVform_e} and \e_ref{NZform_e},
\begin{equation*}\begin{split}
\E\big(\dot\V_{n;\a}^{(|\Ga|)}\big)\big|_{Q_{\Ga}}&=
  \pi_1^*\E\big(\dot\V_{n;\a}^{(|\Ga_1|)}\big)\cdot\pi_2^*\E(\dot\V_{n;\a}^{(|\Ga_2|)}\big)\,,\\
\frac{\E(\N Q_{\Ga})}{\E(T_{P_i}\Pn)}&=
\pi_1^*\bigg(\frac{\E(\N Q_{\Ga_1})}{\E(T_{P_i}\P^{n-1})}\bigg)
\cdot\pi_2^*\bigg(\frac{\E(\N Q_{\Ga_2})}{\E(T_{P_{\mu(v_1)}}\P^{n-1})}\bigg)
\cdot \big(\om_{e_1;v_1}-\pi_2^*\psi_1\big).
\end{split}\end{equation*}
Combining this with~\e_ref{psiform_e}, \e_ref{EdgContr_e}, and~\e_ref{phirestr_e}, we find that
\BE{bndlsplit_e6}\begin{split}
&q^{|\Ga|}\!\int_{Q_{\Ga}}
\frac{\E(\dot\V_{n;\a}^{(|\Ga|)})\ev_1^*\phi_i}{(\hb\!-\!\psi_1)\E(\N Q_{\Ga})}\\
&\hspace{.6in}
=\frac{\fC_i^{\mu(v_1)}(\d(e_1))q^{\d(e_1)}}{\hb-\frac{\al_{\mu(v_1)}-\al_i}{\d(e_1)}}\cdot
\Bigg(q^{|\Ga_2|}\!\bigg\{\int_{Q_{\Ga_2}}
\frac{\E(\dot\V_{n;\a}^{(|\Ga_2|)})\ev_1^*\phi_{\mu(v_1)}}
{(\hb\!-\!\psi_1)\E(\N Q_{\Ga_2})}\bigg\}\bigg|_{\hb=\frac{\al_{\mu(v_1)}-\al_i}{\d(e_1)}}\Bigg).
\end{split}\EE
By \e_ref{Zeval_e} with $i$ replaced by $\mu(v_1)$ and
the localization formula~\e_ref{ABothm_e}, the sum of the last factors over all possibilities
for~$\Ga_2$, with $\Ga_1$ held fixed,~is
$$\cZ_{n;\a}\big(\al_{\mu(v_1)},(\al_{\mu(v_1)}\!-\!\al_i)/\d(e_1),q\big)-1.$$
On the other hand, the contribution of the graph $\Ga_{i\mu(v_1)}(\d(e_1))$
as in the first diagram in Figure~\ref{loopgraph_fig} is precisely the first factor
on the right-hand side of~\e_ref{bndlsplit_e6}.
Thus, the contribution to~\e_ref{Zeval_e} from all strands~$\Ga$
such that $\mu(v_1)\!=\!j$ and $\d(e_1)\!=\!d$ is
$$\frac{\fC_i^j(d)q^d}{\hb-\frac{\al_j-\al_i}{d}}
\cZ_{n;\a}(\al_j,(\al_j\!-\!\al_i)/d,q\big),$$
i.e.~the $(d,j)$-summand in the recursion~\e_ref{recurdfn_e2} for~$\cZ_{n;\a}$.\\

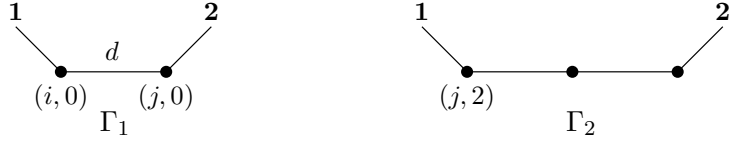
\begin{figure}
\begin{pspicture}(-3.5,-1)(10,1)
\psset{unit=.4cm}
\psline[linewidth=.04](1.5,0)(5,0)\rput(3.2,.7){\smsize{$d$}}
\pscircle*(1.5,0){.2}\rput(1.5,-.85){\smsize{$(i,0)$}}
\pscircle*(5,0){.2}\rput(5,-.85){\smsize{$(j,0)$}}
\psline[linewidth=.04](1.5,0)(0,1.5)\rput(0,2){\smsize{$\bf 1$}}
\psline[linewidth=.04](5,0)(6.5,1.5)\rput(6.5,2){\smsize{$\bf 2$}}
\rput(3.3,-1.7){$\Ga_1$}
\psline[linewidth=.04](15,0)(18.5,0)\pscircle*(18.5,0){.2}
\psline[linewidth=.04](18.5,0)(22,0)\pscircle*(22,0){.2}
\pscircle*(15,0){.2}\rput(15,-.85){\smsize{$(j,2)$}}
\psline[linewidth=.04](15,0)(13.5,1.5)\rput(13.5,2){\smsize{$\bf 1$}}
\psline[linewidth=.04](22,0)(23.5,1.5)\rput(23.5,2){\smsize{$\bf 2$}}
\rput(18.8,-1.7){$\Ga_2$}\end{pspicture}
\caption{The two sub-strands of the second strand in Figure~\ref{loopgraph_fig}.}
\label{splitgraph_fig}
\end{figure}

\noindent
Suppose next that $\Ga$ is a strand such that $\mu(v_{\min})\!=\!i$ and $\d(v_{\min})\!>\!0$.
If $|\Ver|\!>\!1$, i.e.~$\Ga$ is not as in the first diagram in Figure~\ref{splitgraph_fig2},
we break~$\Ga$ at $v_{\min}$ into two ``sub-strands":
\begin{enumerate}[label=(\roman*)]
\item $\Ga_0$ consisting of the vertex $\{v_{\min}\}$ only,
with the same $\mu$ and $\d$-values as in~$\Ga$;
\item $\Ga_c$ consisting all vertices and edges of $\Ga$, but
with the $\d$-value of $v_{\min}$ replaced by~0;
\end{enumerate}
see Figure~\ref{splitgraph_fig2}.
By~\e_ref{Zlocus_e},
\BE{Zlocus_e2}
Q_{\Ga}\approx Q_{\Ga_0}\times Q_{\Ga_c}=
(\ov\cM_{0,2|\d(v_{\min})}/\bS_{\d(v_{\min})})\times Q_{\Ga_c};\EE
if $|\Ver|\!=\!1$, this decomposition holds with $Q_{\Ga_c}\!\equiv\!\{pt\}$
and $\d(v_{\min})\!=\!|\Ga|$.
Let $\pi_0,\pi_c$ be the two component projection maps in~\e_ref{Zlocus_e2}.
Since
$$\psi_1|_{Q_{\Ga}}=\pi_0^*\psi_1\,,$$
$\T$ acts trivially on $\ov\cM_{0,2|\d(v_{\min})}$,
$$\psi_1=1\times\psi_1\in H_{\T}^*\big(\ov\cM_{0,2|\d(v_{\min})}\big)
=H_{\T}^*\otimes H^*\big(\ov\cM_{0,2|\d(v_{\min})}\big),$$
i.e.~$\T$ acts trivially on the universal cotangent line bundle for the first marked point
on $\ov\cM_{0,2|\d(v_{\min})}$, and
the dimension of $\ov\cM_{0,2|\d(v_{\min})}$ is $\d(v_{\min})\!-\!1$,
\BE{hpsirestr_e}\frac{1}{\hb-\psi_1}\big|_{Q_{\Ga}}
=\sum_{r=0}^{\d(v_{\min})-1}\hb^{-(r+1)}\pi_0^*\psi_1^r\,.\EE
Since $|\d(v_{\min})|\!\le\!|\Ga|$ and $\Ga$ contributes to the coefficient of $q^{|\Ga|}$
in~\e_ref{Zeval_e}, it follows that~$\cZ_{n;\a}$ satisfies~\e_ref{recurdfn_e2} with
$\F\!=\!\cZ_{n;\a}$, $C_i^j(d)\!=\!\fC_i^j(d)$, $N_d\!=\!d$,
$\cZ_i^r(d)\!=\!0$ for $r\!\in\!\Z^+$, and $\cZ_i^0(d)\!=\!\de_{0d}$.
In particular, $\cZ_{n;\a}$ is $\fC$-recursive.\\

\noindent
It remains to verify the last identity in Proposition~\ref{cZrec_prp}.
We continue with the notation as in the previous paragraph.
If $|\Ver|\!=\!1$, the second factor in~\e_ref{Zlocus_e2} is trivial;
in this case, \e_ref{cVform_e} and \e_ref{NZform_e} immediately give
\BE{V1contr_e}
q^{|\Ga|}\!\int_{Q_{\Ga}}
\frac{\E(\dot\V_{n;\a}^{(|\Ga|)})\ev_1^*\phi_i}{(\hb\!-\!\psi_1)\E(\N Q_{\Ga})}
=\sum_{r=0}^{|\Ga|-1}\!\hb^{-(r+1)}
\frac{q^{|\Ga|}}{(|\Ga|)!}\int_{\ov\cM_{0,2||\Ga|}}\frac{\E(\dot\V_{\a}^{(|\Ga|)}(\al_i))\psi_1^r}
{\prod\limits_{k\neq i}\E(\dot\V_1^{(|\Ga|)}(\al_i\!-\!\al_k))} \,.
\EE
Suppose next that $|\Ver|\!>\!1$.
By~\e_ref{cVform_e} and \e_ref{NZform_e},
\begin{equation*}\begin{split}
\E\big(\dot\V_{n;\a}^{(|\Ga|)}\big)\big|_{Q_{\Ga}}&=
  \pi_0^*\E\big(\dot\V_{\a}^{(|\Ga_0|)}(\al_i)\big)\cdot\pi_c^*\E(\dot\V_{n;\a}^{(|\Ga_c|)}\big)\,,\\
\frac{\E(\N Q_{\Ga})}{\E(T_{P_i}\Pn)}&=
\pi_0^*\prod_{k\neq i}\E\big(\dot\V_1^{(|\Ga_0|)}(\al_i\!-\!\al_k)\big)
\cdot\pi_c^*\bigg(\frac{\E(\N Q_{\Ga_c})}{\E(T_{P_i}\P^{n-1})}\bigg)
\cdot \big(\om_{e_1;v_{\min}}-\pi_0^*\psi_2\big),
\end{split}\end{equation*}
where $e_1$ is the edge leaving~$v_{\min}$.
By~\e_ref{omdfn_e},
$$\frac1{\om_{e_1;v_{\min}}-\pi_0^*\psi_2}
=\sum_{b=0}^{\i}\pi_0^*\psi_2^b\,(-\pi_c^*\psi_1)^{-(b+1)}\,.$$
Combining the last four identities, we find that
\BE{ghsum_e}\begin{split}
q^{|\Ga|}\!\int_{Q_{\Ga}}
\frac{\E(\dot\V_{n;\a}^{(|\Ga|)})\ev_1^*\phi_i}{(\hb\!-\!\psi_1)\E(\N Q_{\Ga})}
=\sum_{r=0}^{d_0-1}\sum_{b=0}^{d_0-1-r}\!\!\!\hb^{-(r+1)}\Bigg(
\frac{q^{d_0}}{d_0!}\int_{\ov\cM_{0,2|d_0}}
\frac{\E(\dot\V_{\a}^{(d_0)}(\al_i))\psi_1^r\psi_2^b}
{\prod\limits_{k\neq i}\E(\dot\V_1^{(d_0)}(\al_i\!-\!\al_k))}\qquad&\\
\times(-1)^{b+1}q^{|\Ga_c|}
\int_{Q_{\Ga_c}}\psi_1^{-(b+1)}\frac{\E(\dot\V_{n;\a}^{(|\Ga_c|)})\ev_1^*\phi_i}{\E(\N Q_{\Ga_c})}&\Bigg),
\end{split}\EE
where $d_0\!=\!\d(v_{\min})\!=\!|\Ga_0|$.\\

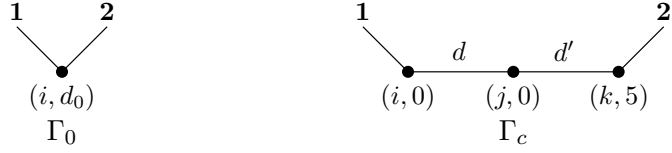
\begin{figure}
\begin{pspicture}(1.5,-.8)(10,1)
\psset{unit=.4cm}
\pscircle*(15,0){.2}\rput(15,-.85){\smsize{$(i,d_0)$}}
\psline[linewidth=.04](15,0)(13.5,1.5)\rput(13.5,2){\smsize{$\bf 1$}}
\psline[linewidth=.04](15,0)(16.5,1.5)\rput(16.5,2){\smsize{$\bf 2$}}
\rput(15,-2){$\Ga_0$}
\psline[linewidth=.04](26.5,0)(30,0)\pscircle*(30,0){.2}
\rput(28.2,.7){\smsize{$d$}}\rput(31.7,.7){\smsize{$d'$}}
\pscircle*(26.5,0){.2}\rput(26.5,-.85){\smsize{$(i,0)$}}
\rput(30,-.85){\smsize{$(j,0)$}}
\rput(33.5,-.85){\smsize{$(k,5)$}}
\psline[linewidth=.04](30,0)(33.5,0)\pscircle*(33.5,0){.2}
\psline[linewidth=.04](26.5,0)(25,1.5)\rput(25,2){\smsize{$\bf 1$}}
\psline[linewidth=.04](33.5,0)(35,1.5)\rput(35,2){\smsize{$\bf 2$}}
\rput(30,-2){$\Ga_c$}
\end{pspicture}
\caption{The two sub-strands of the last strand in Figure~\ref{loopgraph_fig}.}
\label{splitgraph_fig2}
\end{figure}

\noindent
We now sum up the last factors in~\e_ref{ghsum_e}
 over all possibilities for $\Ga_c$ with $|\Ga_c|\!>\!0$ by
decomposing $\Ga_c$
into sub-strands $\Ga_1\!=\!\Ga_{ij}(d)$, for some $j\!\in\![n]\!-\!i$ and $d\!\in\!\Z^+$,
and~$\Ga_2$, as in the case $\d(v_{\min})\!=\!0$ above.
If $\Ga_c\!\neq\!\Ga_1$, \e_ref{bndlsplit_e6} with $\Ga$ replaced by $\Ga_c$ gives
\begin{equation*}\begin{split}
&q^{|\Ga_c|}
\int_{Q_{\Ga_c}}\psi_1^{-(b+1)}\frac{\E(\dot\V_{n;\a}^{(|\Ga_c|)})\ev_1^*\phi_i}{\E(\N Q_{\Ga_c})}
=\fC_i^{\mu(v_1)}(\d(e_1))q^{\d(e_1)}
\bigg(\frac{\al_{\mu(v_1)}-\al_i}{\d(e_1)}\bigg)^{-(b+1)}\\
&\hspace{1.5in}\times
\Bigg(q^{|\Ga_2|}\!\bigg\{\int_{Q_{\Ga_2}}
\frac{\E(\dot\V_{n;\a}^{(|\Ga_2|)})\ev_1^*\phi_{\mu(v_1)}}{\hb\!-\!\psi_1}
\frac{1}{\E(\N Q_{\Ga_2})}\bigg\}\bigg|_{\hb=\frac{\al_{\mu(v_1)}-\al_i}{\d(e_1)}}\Bigg).
\end{split}\end{equation*}
The sum of the last factors above over all possibilities for $\Ga_2$, with $\Ga_1$ held fixed,
including the case $\Ga_2$ is empty
(when this factor is taken to be~1 for the equality to hold), is
$$\cZ_{n;\a}\big(\al_{\mu(v_1)},(\al_{\mu(v_1)}\!-\!\al_i)/\d(e_1),q\big),$$
as before.
Comparing with the recursion~\e_ref{recurdfn_e2} for~$\cZ_{n;\a}$, we conclude
\begin{equation*}\begin{split}
\sum_{\begin{subarray}{c}\Ga_c,\,|\Ga_c|>0\\ \mu(v_1)=j,\d(e_1)=d\end{subarray}}
\!\!\!\!\!\!\!\!\!q^{|\Ga_c|}\!\!\!
\int_{Q_{\Ga_c}}\!\!\psi_1^{-(b+1)}\frac{\E(\dot\V_{n;\a}^{(|\Ga_c|)})\ev_1^*\phi_i}{\E(\N Q_{\Ga_c})}
&=\fC_i^j(d)q^d \bigg(\frac{\al_j-\al_i}{d}\bigg)^{-(b+1)}
\!\!\cZ_{n;\a}\big(\al_j,(\al_j\!-\!\al_i)/d,q\big)\\
&=\Rs{\hb=\frac{\al_j-\al_i}{d}}\big\{\hb^{-(b+1)}\cZ_{n:\a}(\al_i,\hb,q)\big\}.
\end{split}\end{equation*}
Thus, by the recursion~\e_ref{recurdfn_e2} for $\cZ_{n;\a}$
and the Residue Theorem on~$S^2$,
\begin{equation*}\begin{split}
\sum_{\Ga_c,\,|\Ga_c|>0}
q^{|\Ga_c|}
\int_{Q_{\Ga_c}}\psi_1^{-(b+1)}\frac{\E(\dot\V_{n;\a}^{(|\Ga_c|)})\ev_1^*\phi_i}{\E(\N Q_{\Ga_c})}
&=-\Rs{\hb=0,\i}\big\{\hb^{-(b+1)}\cZ_{n;\a}(\al_i,\hb,q)\big\}\\
&=-\Rs{\hb=0}\big\{\hb^{-(b+1)}\cZ_{n;\a}(\al_i,\hb,q)\big\}+\de_{0b}\,.
\end{split}\end{equation*}
Combining this with \e_ref{ghsum_e} and \e_ref{V1contr_e}, we obtain
\begin{equation*}\begin{split}
&\sum_{\Ga,\,\d(v_{\min})>0}q^{|\Ga|}
\!\int_{Q_{\Ga}}
\frac{\E(\dot\V_{n;\a}^{(|\Ga|)})\ev_1^*\phi_i}{\hb\!-\!\psi_1}\Big|_{Q_{\Ga}}
\frac{1}{\E(\N Q_{\Ga})}\\
&\qquad
=\sum_{d=1}^{\i}\frac{q^{d}}{d!}\sum_{r=0}^{d-1}\hb^{-(r+1)}\sum_{b=0}^{d-1-r}
\Bigg(\bigg(\int_{\ov\cM_{0,2|d}}\frac{\E(\dot\V_{\a}^{(d)}(\al_i))\psi_1^r\psi_2^b}
{\prod\limits_{k\neq i}\E(\dot\V_1^{(d)}(\al_i\!-\!\al_k))}\bigg)
\Rs{\hb=0}\bigg\{\frac{(-1)^b}{\hb^{b+1}}\cZ_{n;\a}(\al_i,\hb,q)\bigg\}\Bigg).
\end{split}\end{equation*}
This concludes the proof of Proposition~\ref{cZrec_prp}.\\

\noindent
In the case of products of projective spaces and concavex sheaves~\e_ref{gensheaf_e},
we need analogues of~\e_ref{cSbe_e} and~\e_ref{V0prdfn_e} for every pair of tuples
$$\bfd\equiv(d_1,\ldots,d_p)\in(\Z^{\ge0})^p-0, \qquad
\be=(\be_1,\ldots,\be_p)\!\in\!H^2_{\T}.$$
Thus, we define sheaves  $\cS_1^*,\ldots,\cS_p^*$ over the universal curve
$\cU\!\lra\!\ov\cM_{0,2||\bfd|}$ by
$$\cS_1^*\!\equiv\!\cO_{\cU}(\si_1\!+\!\ldots\!+\!\si_{d_1}),\,
\cS_2^*\!\equiv\!\cO_{\cU}(\si_{d_1+1}\!+\!\ldots\!+\!\si_{d_1+d_2}),
\,\ldots \lra \cU$$
and denote by $\cS_i^*(\be_i)$, with $i\!=\!1,\ldots,p$, the sheaves such that
$$\E\big(\cS_i^*(\be_i)\big)=\be_i\!\times\!1+1\!\times\!e(\cS_i^*)
\in H_{\T}^*(\cU)=H_{\T}^*\otimes H^*(\cU).$$
Similarly to \e_ref{V0prdfn_e}, let
\begin{equation*}\begin{split}
\dot\V_{\a}^{(\bfd)}(\be)
= &\bigoplus_{a_{k;1}\ge0} R^0\pi_*\big(\cS_1^*(\be_1)^{a_{k;1}}\!\otimes\!\ldots\!
                                    \otimes\!\cS_1^*(\be_p)^{a_{k;p}}(-\!\si_1)\big)\\
&\oplus \bigoplus_{a_{k;1}<0} R^1\pi_*\big(\cS_1^*(\be_1)^{a_{k;1}}\!\otimes\!\ldots\!
     \otimes\!\cS_1^*(\be_p)^{a_{k;p}}(-\!\si_1)\big)
\lra \ov\cM_{0,2||\bfd|}.
\end{split}\end{equation*}
The fixed points of the $\T$-action on $\P^{n_1-1}\!\times\!\ldots\!\times\!\P^{n_p-1}$ are
$$P_{i_1\ldots i_p}\equiv P_{i_1}\times\ldots\times P_{i_p}\,,
\qquad i_s\in[n_s];$$
thus, the function $\mu$ on vertices now takes values in the tuples $(i_1,\ldots,i_p)$.
The function~$\d$ on vertices now takes values in $(\Z^{\ge0})^p$,
with the space $\ov\cM_{0,2|\d(v)}/\bS_{\d(v)}$ above replaced by
$$\ov\cM_{0,2|\d_1(v)+\ldots+\d_p(v)}\big/
\bS_{\d_1(v)}\!\times\!\ldots\!\times\!\bS_{\d_p(v)},$$
in light of~\e_ref{curvquot_e}.
The $\T$-fixed curves are the lines between the points $P_{i_1\ldots i_p}$
and $P_{j_1\ldots j_p}$ such~that
$$\big|\{s\!\in\![p]\!:~i_s\!\neq\!j_s\}\big|=1;$$
thus, the vertices of any edge now differ by precisely one of the indices $(i_1,\ldots,i_p)$,
with the $\om$-classes in~\e_ref{psiform_e}  described by the difference in the weights
of this index.
The strands with $\d(v_{\min})\!=\!0$ now give rise to a triple sum,
with the summation index $s\!\in\![p]$ on the outer sum indicating which of the indices
$(i_1,\ldots,i_p)$ changes.
The computation of the contribution from the strands with $\d(v_{\min})\!>\!0$
proceeds exactly as above, but the denominator in
the integrand for $\ov\cM_{0,2|d_0}$ above is replaced by the product of
factors corresponding to each of the $p$~factors.
This results in a similar formula  for the secondary coefficients
$\cZ_{i_1\ldots i_p}^r$ in~\e_ref{recurdfn_e2}:
\BE{cZrelext_e}\begin{split}
&\sum_{(d_1,\ldots,d_p)\in(\Z^{\ge0})-0}^{\i}
\hspace{-.4in}\cZ_{i_1\ldots i_p}^r(d_1,\ldots,d_p)
q_1^{d_1}\ldots q_p^{d_p}
=\sum_{\bfd\in(\Z^{\ge0})-0}\frac{q_1^{d_1}\ldots q_p^{d_p}}{d_1!\ldots d_p!}
\sum_{b=0}^{|\bfd|+r}\\
&\hspace{.1in}
\Bigg(\!\!\bigg(\int_{\ov\cM_{0,2||\bfd|}}
\frac{\E(\dot\V_{\a}^{(\bfd)}(\al_{i_1},\ldots,\al_{i_p}))\psi_1^{-r-1}\psi_2^b}
{\prod\limits_{s=1}^p\prod\limits_{k\neq i_s}\E(\dot\V_{e_s}^{(d_s)}(\al_{s;i_s}\!-\!\al_{s;k}))}\bigg)
\Rs{\hb=0}\bigg\{\frac{(-1)^b}{\hb^{b+1}}\cZ_{n;\a}(\al_{i_1},\ldots,\al_{i_p},
\hb,q_1,\ldots,q_p)\bigg\}\!\!\Bigg),
\end{split}\EE
whenever $r\!\in\!\Z^-$ and $i_s\!\in\![n_s]$, if
$e_s\!\in\!(\Z^+)^p$ is the $s$-th coordinated vector.

\section{Polynomiality for stable quotients}
\label{SPC_sec}

\noindent
In this section, we adopt the argument in \cite[Section~30.2]{MirSym}, showing
that the equivariant version of Givental's $J$-function
satisfies the self-polynomiality condition of Definition~\ref{SPC_dfn},
to show that the equivariant stable quotients analogue of Givental's
$J$-function, the power series~$\cZ_{n;\a}$ defined by~\e_ref{cZdfn_e},
also satisfies the self-polynomiality condition.
Proposition~\ref{cZSPC_prp} is an immediate consequence of Lemma~\ref{PhiZstr_lmm}
below, which provides a geometric description of the power series~$\Phi_{\cZ_{n;\a}}$.

\begin{prp}\label{cZSPC_prp}
If $l\!\in\!\Z^{\ge0}$, $n\!\in\!\Z^+$, and $\a\!\in\!(\Z^*)^l$,
the power series $\cZ_{n;\a}(\x,\hb,q)$ satisfies the self-polynomiality
condition.
\end{prp}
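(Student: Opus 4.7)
The plan is to adapt the argument of \cite[Section~30.2]{MirSym} by realizing $\Phi_{\cZ_{n;\a}}(\hb,z,q)$ as a generating series of equivariant pushforwards from the smooth moduli spaces $\ov{Q}_{0,2}(\P^1\!\times\!\Pn,(1,d))$ to a point, and then reading off polynomiality in $\hb$ from the ambient equivariant cohomology. To set things up, I would equip $\P^1$ with a $\C^*$-action having tangent weights $\hb$ and $-\hb$ at its two fixed points $P_0^{(1)},P_\i^{(1)}$, and combine it with the standard $\T$-action on $\Pn$ to obtain a $\T\!\times\!\C^*$-action on each $\ov{Q}_{0,2}(\P^1\!\times\!\Pn,(1,d))$. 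By Proposition~\ref{prodsmooth_prp}, these are smooth proper Deligne--Mumford stacks of expected dimension; the evaluation maps $\ev_1,\ev_2$, the universal sheaves $\cS_1,\cS_2$, and therefore the bundle $\dot\V_{n;\a}^{(d)}$ obtained from $\cS_2$ as in~\e_ref{gensheaf_e} are all $\T\!\times\!\C^*$-equivariant.

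The heart of the argument is Lemma~\ref{PhiZstr_lmm}, which I would state in the form
$$\Phi_{\cZ_{n;\a}}(\hb,z,q)=\sum_{d=0}^{\i}q^d\int_{\ov{Q}_{0,2}(\P^1\!\times\!\Pn,(1,d))}\!\!\E\big(\dot\V_{n;\a}^{(d)}\big)\,\ne^{z\,\ev_1^*\x_1}\,\ev_1^*[P_\i^{(1)}]\,\ev_2^*[P_0^{(1)}],$$
where $\x_1\!\in\!H^*_{\T\times\C^*}(\P^1)$ is the $\C^*$-equivariant hyperplane class. The proof is by Atiyah--Bott localization~\e_ref{ABothm_e} with respect to $\C^*$: as in the strand analysis of Section~\ref{SQlocal_sec}, the $\C^*$-fixed loci decompose each stable pair of quotients into a sub-strand over $P_\i^{(1)}$ carrying the first marked point, a sub-strand over $P_0^{(1)}$ carrying the second marked point, and a unique degree-$1$ bridge realizing the $\P^1$-degree, with the $\cS_2$-degree split as $d_\i\!+\!d_0\!+\!d_{\text{br}}\!=\!d$. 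Using \e_ref{Zlocus_e}, \e_ref{NZform_e}, and \e_ref{cVform_e}, each side-integral assembles into an evaluation of $\cZ_{n;\a}(\al_i,\pm\hb,\cdot)$, with the sign controlled by the tangent weight of $\P^1$ at the respective bridge endpoint. The restriction $\ne^{z\,\ev_1^*\x_1}|_{P_\i^{(1)}}=\ne^{\hb z}$ combines with the summation over $d_{\text{br}}\!\ge\!0$ of the bridge contributions to produce the exponential shift $q\!\mapsto\!q\ne^{\hb z}$ in the first factor of~\e_ref{PhiZdfn_e}; the sum over the $\Pn$-fixed point $P_i$ shared by the two bridge endpoints, extracted via~\e_ref{phiprop_e}, produces the outer $i$-summation, with the leading factor $\lr\a\al_i^{\ell(\a)}\ne^{\al_iz}/\!\prod_{k\neq i}(\al_i\!-\!\al_k)$ arising from the Euler classes of $\dot\V_{n;\a}^{(d)}$ and of the normal bundle at the bridge together with the evaluation of an auxiliary insertion $\ne^{z\ev_1^*\x}$ at $P_i$.

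Polynomiality is then immediate: the integrand is a genuine $\T\!\times\!\C^*$-equivariant cohomology class on a smooth proper Deligne--Mumford stack, so its pushforward to a point lies in $H^*_{\T\times\C^*}\!=\!\Q[\al_1,\ldots,\al_n,\hb]$, and assembling over $d\!\ge\!0$ places $\Phi_{\cZ_{n;\a}}$ in $\Q_{\al}[\hb][[z,q]]$, which is exactly the self-polynomiality condition of Definition~\ref{SPC_dfn}. The main obstacle lies in the localization bookkeeping inside Lemma~\ref{PhiZstr_lmm}: in contrast to the stable-maps setting, the stable-quotients bridge may acquire arbitrary $\cS_2$-torsion at either of its nodes, so the bridge contribution itself is a nontrivial hypergeometric series whose summation must be identified with the exponential $\ne^{\hb z}$ and matched against the vertex contributions to produce the shift of $q$. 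This is the precise point where the argument genuinely diverges from the stable-maps proof in \cite[Section~30.2]{MirSym}, and it is handled using \e_ref{EdgContr_e} together with the Hurwitz-integral computation of Lemma~\ref{M02_lmm}; once this identification is in place, Proposition~\ref{cZSPC_prp} follows at once from the polynomiality of equivariant integrals.
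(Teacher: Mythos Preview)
Your proposed integrand cannot produce $\Phi_{\cZ_{n;\a}}$. The class $\ev_1^*\x_1$ is pulled back from $\P^1$, so once you cap with $\ev_1^*[P_\i^{(1)}]$ it becomes a scalar multiple of that class; equivalently, on every fixed locus $Q_\Ga$ the restriction $\ev_1^*\x_1|_{Q_\Ga}$ is the weight of $\x_1$ at a single fixed point of $\P^1$, independent of the degree split $|\Ga_1|\!+\!|\Ga_2|\!=\!d$. Hence $\ne^{z\,\ev_1^*\x_1}$ contributes the same global factor $\ne^{\hb z}$ to every term, and localization yields $\ne^{\hb z}\sum_i(\cdots)\,\cZ_{n;\a}(\al_i,\hb,q)\,\cZ_{n;\a}(\al_i,-\hb,q)$ with no shift of the $q$-variable. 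The factor $\ne^{\al_iz}$ in~\e_ref{PhiZdfn_e} also has no source in your formula---you seem to notice this yourself when you invoke an ``auxiliary insertion $\ne^{z\ev_1^*\x}$'' that is not present in your integrand. Your proposed mechanism for the shift, a sum over bridge degrees $d_{\tn{br}}\!\ge\!0$, does not exist: the unique edge of $\P V$-degree~$1$ joins $(q_1,P_i)$ to $(q_2,P_i)$ and hence has $\Pn$-degree~$0$, and edges carry no torsion by definition.

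What the paper does instead is construct a morphism $\th\!:\X_d'\!\lra\!\ov\X_d\!\equiv\!\P(\C^n\!\otimes\!\Sym^dV^*)$, descending from the Givental/LLY graph-space map through the contraction $c\!:\X_d\!\lra\!\X_d'$, and integrate $\ne^{(\th^*\Om)z}$ where $\Om$ is the equivariant hyperplane class on~$\ov\X_d$. The essential identity is $\th^*\Om|_{Q_\Ga}\!=\!\al_i\!+\!|\Ga_1|\hb$; thus $\ne^{(\th^*\Om)z}$ restricts to $\ne^{\al_iz}\ne^{|\Ga_1|\hb z}$, and $\ne^{|\Ga_1|\hb z}q^{|\Ga_1|}\!=\!(q\ne^{\hb z})^{|\Ga_1|}$ produces the shift directly. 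Polynomiality in~$\hb$ then holds because $\th^*\Om$ is a genuine $\wt\T$-equivariant class on the smooth proper stack~$\X_d'$. The construction of~$\th$---including the verification that it descends to the stable-quotients side---is the ingredient you are missing; by contrast the bridge localization here is no harder than in the stable-maps case, and neither~\e_ref{EdgContr_e} nor Lemma~\ref{M02_lmm} is needed.
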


\noindent
The proof involves applying the classical localization theorem~\cite{ABo} with $(n\!+\!1)$-torus
$$\wt\T\equiv\C^*\times\T,$$
where $\T=(\C^*)^n$ as before.
We denote the weight of the standard action of the one-torus $\C^*$ on $\C$ by~$\hb$.
Thus, by Section~\ref{equivsetup_sec},
$$H_{\C^*}^*\approx\Q[\hb], \quad H_{\wt\T}^*\approx\Q[\hb,\al_1,\ldots,\al_n]
\qquad\Lra\qquad \H_{\wt\T}^*\approx\Q_{\al}(\hb).$$
Throughout this section, $V\!=\!\C\!\oplus\!\C$ denotes the representation
of $\C^*$ with the weights $0$ and~$-\hb$.
The induced action on $\P V$ has two fixed points:
$$q_1\equiv[1,0], \qquad q_2\equiv[0,1].$$
With $\ga_1\!\lra\!\P V$ denoting the tautological line bundle,
\BE{hbaract_e}
\E(\ga_1^*)\big|_{q_1}=0, \quad \E(\ga_1^*)\big|_{q_2}=-\hb,
\quad \E(T_{q_1}\P V)=\hb, \quad \E(T_{q_2}\P V)=-\hb;\EE
this follows from our definition of the weights in Section~\ref{equivsetup_sec}.\\

\noindent
For each $d\!\in\!\Z^{\ge0}$, the action of $\wt\T$ on $\C^n\!\otimes\!\Sym^dV^*$ induces
an action on
$$\ov\X_d\equiv\P\big(\C^n\!\otimes\!\Sym^dV^*\big).$$
It has $(d\!+\!1)n$ fixed points:
$$P_i(r)\equiv \big[\ti{P}_i\otimes u^{d-r}v^r\big], \qquad
i\in[n],~r\in\{0\}\!\cup\![d],$$
if $(u,v)$ are the standard coordinates on $V$ and $\ti{P}_i\!\in\!\C^n$ is
the $i$-th coordinate vector (so that $[\ti{P}_i]\!=\!P_i\!\in\!\P^{n-1}$).
Let
$$\Om\equiv \E(\ga^*)\in H_{\wt\T}^*\big(\ov\X_d\big)$$
denote the equivariant hyperplane class.\\

\noindent
For all $i\!\in\![n]$ and $r\in\{0\}\!\cup\![d]$,
\begin{equation}\label{Xrestr_e}
\Om|_{P_i(r)}=\al_i\!+\!r\hb, \qquad
\E(T_{P_i(r)}\ov\X_d)=\Bigg\{\underset{(s,k)\neq(r,i)}{\prod_{s=0}^d\prod_{k=1}^n}
(\Om\!-\!\al_k\!-\!s\hb)\Bigg\}\bigg|_{\Om=\al_i+r\hb}.~\footnotemark
\end{equation}
\footnotetext{The weight (i.e.~negative first Chern class) of the $\wt\T$-action
on the line $P_i(r)\!\subset\!\C^n\!\otimes\!\Sym^dV^*$ is $\al_i\!+\!r\hb$.
The tangent bundle of $\ov\X_d$ at $P_i(r)$ is the direct sum of the lines
$P_i(r)^*\!\otimes\!P_k(s)$ with $(k,s)\!\neq\!(i,r)$.}
Since
\begin{gather*}
B\ov\X_d=\P\big(B(\C^n\!\otimes\!\Sym^dV^*)\big)\lra B\wt\T \qquad\hbox{and}\\
c\big(B(\C^n\!\otimes\!\Sym^dV^*)\big)
=\prod_{s=0}^d\prod_{k=1}^n\big(1-(\al_k\!+\!s\hb)\big)
\in H^*\big(B\wt\T),\footnotemark
\end{gather*}
\footnotetext{The vector space $\C^n\!\otimes\!\Sym^dV^*$ is the direct sum of
the one-dimensional representations $P_k(s)$ of~$\wt\T$.}
the $\wt\T$-equivariant cohomology of $\ov\X_d$ is given~by
\begin{equation*}\begin{split}
H_{\ti\T}^*\big(\ov\X_d\big)&\equiv H^*\big(B\ov\X_d\big)
=H^*\big(B\wt\T\big)\big[\Om\big]\Big/
 \prod_{s=0}^d\prod_{k=1}^n\big(\Om-(\al_k\!+\!s\hb)\big)\\
&\approx \Q\big[\Om,\hb,\al_1,\ldots,\al_n\big]\Big/
 \prod_{s=0}^d\prod_{k=1}^n\big(\Om-\al_k-\!s\hb\big)\\
&\subset \Q_{\al}[\hb,\Om]\Big/
 \prod_{s=0}^d\prod_{k=1}^n\big(\Om-\al_k-s\hb\big).
\end{split}\end{equation*}
In particular, every element of $H_{\ti\T}^*(\ov\X_d)$ is a polynomial in
$\Om$ with coefficients in $\Q_{\al}[\hb]$ of degree at most $(d\!+\!1)n\!-\!1$.\\

\noindent
By \cite[Lemma~2.6]{LLY}, there is a natural $\wt\T$-equivariant morphism
$$\Th\!:\ov\M_{0,m}\big(\P V\!\times\!\P^{n-1},(1,d)\big)\lra \ov\X_d.$$
A general element of $b$ of $\ov\M_{0,m}\big(\P V\!\times\!\P^{n-1},(1,d)\big)$
determines a morphism
$$(f,g)\!:\P^1\lra(\P V,\P^{n-1}),$$
up to an automorphism  of the domain~$\P^1$.
Thus, the morphism
$$g\circ f^{-1}\!: \P V\lra \P^{n-1}$$
is well-defined and determines an element $\Th(b)\!\in\!\ov\X_d$.
Let
\begin{alignat}{1}
\X_d&=\big\{b\!\in\!\ov\M_{0,2}\big(\P V\!\times\!\P^{n-1},(1,d)\big)\!:
\ev_1(b)\!\in\!q_1\!\times\!\P^{n-1},~\ev_2(b)\!\in\!q_2\!\times\!\P^{n-1}\big\},\notag\\
\label{Xdfn_e}
\X_d'&=\big\{b'\!\in\!\ov{Q}_{0,2}\big(\P V\!\times\!\P^{n-1},(1,d)\big)\!:
\ev_1(b')\!\in\!q_1\!\times\!\P^{n-1},~\ev_2(b')\!\in\!q_2\!\times\!\P^{n-1}\big\}.
\end{alignat}
Since the morphism to $\P^1$ corresponding to any element of $b'\!\in\!\X_d'$
takes the two marked points to $q_1$ and~$q_2$,
it is not constant.
Thus, the restriction of the morphism~$\Th$ to~$\X_d$ is constant along
the fibers of the natural surjective morphism
$c\!:\X_d\!\lra\!\X_d'$.\footnote{For a stable map $b$, $\Th(b)$ depends only on the restriction
of $b$ to the irreducible component $\cC_{b;1}$ of its domain~$\cC_b$ on which
the degree of the map to~$\P^1$ is not zero, the nodes of~$\cC_{b;1}$, and
the degrees of the restrictions of~$b$ to the connected components of $\cC_b\!-\!\cC_{b;1}$.
In contrast, $c(b)$ depends on the restriction of $b$ to the minimal connected union (chain)
of irreducible components~$\cC_b'$ of its domain which contains the two marked points,
the nodes of~$\cC_b'$, and
the degrees of the restrictions of~$b$ to the connected components of $\cC_b\!-\!\cC_b'$.
Whenever $b\!\in\!\X_d$, $\cC_{b;1}\!\subset\!\cC_b'$.
Thus, the restriction of $\Th$ to $\X_d$ contracts everything that the restriction
of~$c$ contracts.}
It follows that the restriction of~$\Th$ to~$\X_d$ descends via~$c$ to a morphism
$$\th\!=\!\th_d\!:\X_d'\lra  \ov\X_d.$$\\

\noindent
For $d\!>\!0$, there is also a natural forgetful morphism
$$F\!: \ov{Q}_{0,2}\big(\P V\!\times\!\P^{n-1},(1,d)\big)\lra
\ov{Q}_{0,2}\big(\P^{n-1},d\big),$$
which drops the first sheaf in the pair and contracts one component of the domain if necessary.
Similarly to~\e_ref{Vprdfn_e}, for each $d\!\in\!\Z^+$ let
$$\V_{n;\a}^{(d)}=\bigoplus_{a_k>0}R^0\pi_*\big(\cS^{*a_k}\big)
 \oplus \bigoplus_{a_k<0}R^1\pi_*\big(\cS^{*a_k}\big)
 \lra \ov{Q}_{0,2}(\Pn,d).$$
From the usual short exact sequence for the restriction along $\si_1$, we find that
\BE{VvsVpr_e}
\E\big(\V_{n;\a}^{(d)}\big)
=\lr\a \ev_1^*\x^{\ell(\a)}\E\big(\dot\V_{n;\a}^{(d)}\big)
\in H_{\T}^*\big(\ov{Q}_{0,2}(\Pn,d)\big).\EE
In the case $d\!=\!0$, we set
$$F^*\E(\V_{n;\a}^{(0)})=\lr\a\ev_1^*\big(1\!\times\!\x^{\ell(\a)}\big)\in
H^*\big( \ov{Q}_{0,2}(\P V\!\times\!\P^{n-1},(1,0))\big);$$
this is used in Lemma~\ref{PhiZstr_lmm} below.

\begin{lmm}\label{PhiZstr_lmm}
If $l\!\in\!\Z^{\ge0}$, $n\!\in\!\Z^+$, and $\a\!\in\!(\Z^*)^l$, then
\BE{PhiZstr_e}\Phi_{\cZ_{n;\a}}(\hb,z,q) =
\sum_{d=0}^{\i}q^d\!\!
\int_{\X_d'}\!\!\!\ne^{(\th^*\Om)z}F^*\E(\V_{n;\a}^{(d)})
\in H_{\wt\T}^*\big[\big[z,q\big]\big]\subset\Q_{\al}[\hb]\big[\big[z,q\big]\big].\EE
\end{lmm}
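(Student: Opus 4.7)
The plan is to apply the Atiyah--Bott localization theorem to the right-hand side of~\eqref{PhiZstr_e}, viewed as a $\wt\T$-equivariant integral over $\X_d'$, and to show that the sum of fixed-point contributions reassembles into the defining expression~\eqref{PhiZdfn_e} for $\Phi_{\cZ_{n;\a}}$. Self-polynomiality (Proposition~\ref{cZSPC_prp}) is then immediate, because the right-hand side of~\eqref{PhiZstr_e} is an ordinary $\wt\T$-equivariant pushforward from a compact Deligne--Mumford stack, with no rational-function denominators, and hence lies in $H_{\wt\T}^*[[z,q]]=\Q[\hb,\al_1,\ldots,\al_n][[z,q]]\subset\Q_{\al}[\hb][[z,q]]$.

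The first step is to classify the $\wt\T$-fixed loci of $\X_d'$. A $\wt\T$-fixed stable pair $(\cC,y_1,y_2;S_1,S_2)$ of degree $(1,d)$ must contain a distinguished component $\cC_0\!\subset\!\cC$ on which $S_1$ is torsion-free of degree~$-1$, identifying $\cC_0\!\cong\!\P V$, with every other component contracted by the $\P V$-map to $q_1$ or~$q_2$. Classifying $\C^*$-equivariant rank-one subsheaves of $\C^n\!\otimes\!\cO_{\P V}$, using the stable-quotients requirement that the torsion of $Q_2$ avoid nodes and markings, and combining with the constraints $\ev_1\!\in\!q_1\!\times\!\Pn$, $\ev_2\!\in\!q_2\!\times\!\Pn$, I will show that $S_2|_{\cC_0}$ must be the constant sub-bundle $\C\!\cdot\!\ti P_i\!\subset\!\C^n\!\otimes\!\cO_{\cC_0}$ for a unique $i\!\in\![n]$, with all degree~$d$ concentrated on bubble trees attached at the preimages of~$q_1$ and~$q_2$. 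Thus the connected fixed loci are indexed by $(i,d_1,d_2)$ with $d_1\!+\!d_2\!=\!d$ and each takes the form
$$F_i(d_1,d_2)\approx \big(Q_{\Ga_1}\!\times\!Q_{\Ga_2}\big)\big/\Aut,$$
where each $Q_{\Ga_s}\!\subset\!\ov{Q}_{0,2}(\Pn,d_s)$ is a $\T$-fixed locus on which the marked point corresponding to the node at~$q_s$ maps to~$P_i$.

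Next I compute the integrand restrictions. The section of $\C^n\!\otimes\!\Sym^d V^*$ obtained from $(\cC,S_2)$ by concentrating the bubble torsion at $q_1$ and~$q_2$ is $\ti P_i\!\otimes\!u^{d_2}v^{d_1}$, so $\th$ sends $F_i(d_1,d_2)$ to the fixed point $P_i(d_1)\!\in\!\ov\X_d$; by~\eqref{Xrestr_e}, $\th^*\Om|_{F_i(d_1,d_2)}=\al_i\!+\!d_1\hb$ and hence $\ne^{(\th^*\Om)z}|_{F_i(d_1,d_2)}=\ne^{\al_iz}\ne^{d_1\hb z}$. The restriction of $F^*\E(\V_{n;\a}^{(d)})$ splits along the two nodes into the product $\E(\V_{n;\a}^{(d_1)})|_{Q_{\Ga_1}}\E(\V_{n;\a}^{(d_2)})|_{Q_{\Ga_2}}$ up to a fiber correction at the node; invoking~\eqref{VvsVpr_e} at each bubble extracts a $\lr\a\al_i^{\ell(\a)}$-prefactor times a product of $\E(\dot\V_{n;\a}^{(d_s)})|_{Q_{\Ga_s}}$. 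The normal bundle of $F_i(d_1,d_2)$ in $\X_d'$ factors as the product of the $\T$-equivariant normal bundles of the $Q_{\Ga_s}$ in $\ov{Q}_{0,2}(\Pn,d_s)$ together with the two node-smoothing factors $\hb\!-\!\psi_1^{(1)}$ and $-\hb\!-\!\psi_1^{(2)}$ arising from~\eqref{hbaract_e} and the tangent weights $\pm\hb$ of $\P V$ at~$q_1,q_2$, divided by the tangent weights $\prod_{k\neq i}(\al_i\!-\!\al_k)$ at the $\Pn$-image of each node.

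Summing over the $\T$-fixed loci $Q_{\Ga_s}$ with $\mu(v_{\min})\!=\!i$ on each side and using~\eqref{Zeval_e} together with~\eqref{phirestr_e}, the node-smoothing factor $\hb\!-\!\psi_1^{(1)}$ paired with $\ev_1^*\phi_i$ reassembles the $q_1$-side into the coefficient of $q^{d_1}$ in $\cZ_{n;\a}(\al_i,\hb,q)$, and analogously $-\hb\!-\!\psi_1^{(2)}$ reassembles the $q_2$-side into the coefficient of $q^{d_2}$ in $\cZ_{n;\a}(\al_i,-\hb,q)$. After cancelling the $\prod_{k\neq i}(\al_i\!-\!\al_k)$ factors, multiplying by $\ne^{\al_i z}\ne^{d_1\hb z}$, and summing over $i\!\in\![n]$ and $d_1,d_2\!\ge\!0$ weighted by $q^{d_1+d_2}$---with the $\ne^{d_1\hb z}$ absorbing into the first factor as $q\!\mapsto\!q\ne^{\hb z}$---one recovers the right-hand side of~\eqref{PhiZdfn_e}. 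The hard part will be the fixed-locus classification, specifically ruling out $\C^*$-equivariant sub-bundles of positive degree on~$\cC_0$, which requires combining the observation that $\C^*$-equivariant sections of $\C^n\!\otimes\!\cO_{\P V}(d_0)$ live in single weight spaces (and hence can have only one nonzero component) with the stable-quotients restriction on the location of the torsion of~$Q_2$; once this is in place, the sign bookkeeping at~$q_2$ proceeds by direct adaptation of the Gromov--Witten computation in~\cite[Section~30.2]{MirSym}.
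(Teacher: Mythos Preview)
Your proposal is correct and follows essentially the same approach as the paper: both localize the $\wt\T$-equivariant integral over $\X_d'$, identify each fixed locus as a central degree-$(1,0)$ component $\cC_0\cong\P V$ mapping constantly to some~$P_i$ with stable-quotient bubbles at $q_1,q_2$, factor the integrand and normal bundle across the two nodes (yielding the node-smoothing factors $\hb\!-\!\psi_1$ and $-\hb\!-\!\psi_1$ from~\eqref{hbaract_e}), and then resum the bubble contributions via~\eqref{Zeval_e} into $\cZ_{n;\a}(\al_i,\hb,q\ne^{\hb z})\cdot\cZ_{n;\a}(\al_i,-\hb,q)$. Your indexing of the connected fixed loci by triples $(i,d_1,d_2)$ is a mild abuse---each such triple corresponds to many connected $\wt\T$-fixed components, one for each pair $(\Ga_1,\Ga_2)$ of decorated strands as in the paper's Figures~\ref{Xlocus_fig}--\ref{Xsplit_fig}---but your subsequent ``summing over the $\T$-fixed loci $Q_{\Ga_s}$ with $\mu(v_{\min})\!=\!i$'' makes clear you have the right picture.
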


\noindent
We prove Lemma~\ref{PhiZstr_lmm} in the remainder of this section by applying the localization
theorem of~\cite{ABo} to the $\wt\T$-action on~$\X_d'$.
We show that each fixed locus of the $\wt\T$-action on $\X_d'$ contributing
to the right-hand side of~\e_ref{PhiZstr_e} corresponds to a pair $(\Ga_1,\Ga_2)$
of decorated strands as in~\e_ref{decortgraphdfn_e}, with $\Ga_1$ and $\Ga_2$ contributing to
$\cZ_{n;\a}(\al_i,\hb,q\ne^{\hb z})$ and
$\cZ_{n;\a}(\al_i,-\hb,q)$, respectively, for some $i\!\in\![n]$.\\

\noindent
Similarly to Section~\ref{SQlocal_sec}, the fixed loci of the $\wt\T$-action on
$\ov{Q}_{0,2}(\P V\!\times\!\P^{n-1},(d',d))$
correspond to decorated strands $\Ga$ with 2~marked points at the opposite ends.
The map~$\d$ should now take values in pairs of nonnegative integers, indicating the degrees of
the two subsheaves.
The map~$\mu$ should similarly take values in the pairs~$(i,j)$
with $i\!\in\![2]$ and $j\!\in\![n]$,
indicating the fixed point~$(q_i,P_j)$ to which the vertex is mapped.
The $\mu$-values on consecutive vertices must differ by precisely one of the two components.\\

\noindent
The situation for the  $\wt\T$-action on
$$\X_d'\subset\ov{Q}_{0,2}\big(\P V\!\times\!\P^{n-1},(1,d)\big)$$
is simpler, however.
There is a unique edge of positive $\P V$-degree; we draw it as a thick line
in Figure~\ref{Xlocus_fig}.
The first component of the value of~$\d$ on all other edges
and on all vertices must be~0; so we drop~it.
The first component of the value of~$\mu$ on the vertices changes only when the thick
edge is crossed.
Thus, we drop the first components of the vertex labels as well,
with the convention that these components are~1 on the left side of the thick
edge and~2 on the right.
In particular, the vertices to the left of the thick edge (including the left endpoint)
lie in $q_1\!\times\!\P^{n-1}$ and the vertices to its right lie in $q_2\!\times\!\P^{n-1}$.
Thus, by~\e_ref{Xdfn_e}, the marked point~1 is attached to a vertex to the left of
the thick edge and the marked point~2 is attached to a vertex to the right.
Finally, the remaining, second component of~$\mu$ takes the same value $i\!\in\![n]$
on the two vertices of the thick edge.\\

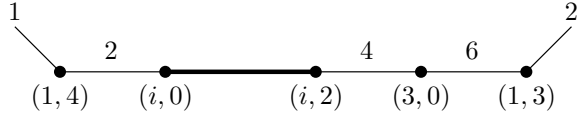
\begin{figure}
\begin{pspicture}(0.3,-.8)(10,1)
\psset{unit=.4cm}
\psline[linewidth=.15](17,0)(22,0)
\pscircle*(17,0){.2}\pscircle*(22,0){.2}
\psline[linewidth=.04](17,0)(13.5,0)\pscircle*(13.5,0){.2}
\psline[linewidth=.04](13.5,0)(12,1.5)\rput(12,2){\smsize{$1$}}
\rput(15.2,.7){\smsize{2}}\rput(13.5,-.85){\smsize{$(1,4)$}}
\rput(17,-.85){\smsize{$(i,0)$}}
\psline[linewidth=.04](22,0)(25.5,0)\pscircle*(25.5,0){.2}
\psline[linewidth=.04](25.5,0)(29,0)\pscircle*(29,0){.2}
\psline[linewidth=.04](29,0)(30.5,1.5)\rput(30.5,2){\smsize{$2$}}
\rput(22,-.85){\smsize{$(i,2)$}}\rput(25.5,-.85){\smsize{$(3,0)$}}
\rput(29,-.85){\smsize{$(1,3)$}}
\rput(23.7,.7){\smsize{4}}\rput(27.2,.7){\smsize{6}}
\end{pspicture}
\caption{A strand representing a fixed locus in $\X_d'$; $i\!\neq\!1,3$}
\label{Xlocus_fig}
\end{figure}

\noindent
Let $\cA_i$ denote the set of strands as above so that the $\mu$-value on
the two endpoints of the thick edge is labeled~$i$; see Figure~\ref{Xlocus_fig}.
We break each strand $\Ga\!\in\!\cA_i$ into three sub-strands:
\begin{enumerate}[label=(\roman*)]
\item $\Ga_1$ consisting of all vertices of $\Ga$ to the left of the thick edge,
including its left vertex~$v_1$ with its $\d$-value,
but in the opposite order, and a new marked point attached to~$v_1$;
\item $\Ga_0$ consisting of the thick edge~$e_0$, its two vertices $v_1$ and $v_2$,
with $\d$-values set to~0, and
new marked points $1$ and $2$ attached to $v_1$ and $v_2$, respectively;
\item $\Ga_2$ consisting of all vertices to the right of the thick edge,
including its right vertex~$v_2$ with its $\d$-value, and a new marked point attached to~$v_2$;
\end{enumerate}
see Figure~\ref{Xsplit_fig}.
From~\e_ref{Zlocus_e}, we then obtain a splitting of
the fixed locus in $\X_d'$ corresponding to~$\Ga$:
\BE{Xlocus_e1}
Q_{\Ga}\approx Q_{\Ga_1}\times Q_{\Ga_0}\times Q_{\Ga_2}
\subset\ov{Q}_{0,2}(\P^{n-1},|\Ga_1|)\times
\ov{Q}_{0,2}(\P V,1)\times\ov{Q}_{0,2}(\P^{n-1},|\Ga_2|).\EE
The exceptional cases are $|\Ga_1|\!=\!0$ and $|\Ga_2|\!=\!0$;
the above isomorphism then holds with the corresponding component replaced by a point.\\

\begin{figure}
\begin{pspicture}(0,-.8)(10,1)
\psset{unit=.4cm}
\psline[linewidth=.04](7.5,0)(11,0)\rput(9.2,.7){\smsize{2}}
\pscircle*(7.5,0){.2}\rput(7.5,-.85){\smsize{$(1,4)$}}
\pscircle*(11,0){.2}\rput(11,-.85){\smsize{$(i,0)$}}
\psline[linewidth=.04](7.5,0)(6,1.5)\rput(6,2){\smsize{$\bf 2$}}
\psline[linewidth=.04](11,0)(12.5,1.5)\rput(12.5,2){\smsize{$\bf 1$}}
\rput(9.3,-2){$\Ga_1$}
\psline[linewidth=.04](17,0)(15.5,1.5)\rput(15.5,2){\smsize{$\bf 1$}}
\psline[linewidth=.04](22,0)(23.5,1.5)\rput(23.5,2){\smsize{$\bf 2$}}
\psline[linewidth=.15](17,0)(22,0)
\rput(17,-.85){\smsize{$(i,0)$}}\rput(22,-.85){\smsize{$(i,0)$}}
\rput(19.5,-2){$\Ga_0$}
\psline[linewidth=.04](27,0)(30.5,0)\pscircle*(30.5,0){.2}
\psline[linewidth=.04](30.5,0)(34,0)\pscircle*(34,0){.2}
\pscircle*(27,0){.2}\rput(27,-.85){\smsize{$(i,2)$}}
\rput(30.5,-.85){\smsize{$(3,0)$}}\rput(34,-.85){\smsize{$(1,3)$}}
\psline[linewidth=.04](27,0)(25.5,1.5)\rput(25.5,2){\smsize{$\bf 1$}}
\psline[linewidth=.04](34,0)(35.5,1.5)\rput(35.5,2){\smsize{$\bf 2$}}
\rput(28.7,.7){\smsize{4}}\rput(32.2,.7){\smsize{6}}
\rput(30.5,-2){$\Ga_2$}
\end{pspicture}
\caption{The three sub-strands of the strand in Figure~\ref{Xlocus_fig}}
\label{Xsplit_fig}
\end{figure}
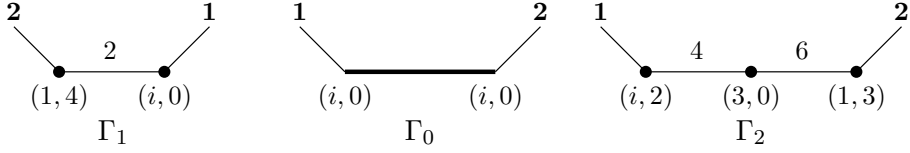

\noindent
Let $\pi_1$, $\pi_0$, and $\pi_2$ denote the three component projection maps in~\e_ref{Xlocus_e1}.
By~\e_ref{VvsVpr_e}, \e_ref{cVform_e}, and~\e_ref{NZform_e},
\BE{bndsplit3_e}\begin{split}
F^*\E\big(\V_{n;\a}^{(|\Ga|)}\big)\big|_{Q_{\Ga}}&=
\lr\a\al_i^{\ell(\a)}\cdot
  \pi_1^*\E\big(\dot\V_{n;\a}^{(|\Ga_1|)}\big)\cdot\pi_2^*\E(\dot\V_{n;\a}^{(|\Ga_2|)}\big)\,,\\
\frac{\E(\N Q_{\Ga})}{\E(T_{P_i}\Pn)}&=
\pi_1^*\bigg(\frac{\E(\N Q_{\Ga_1})}{\E(T_{P_i}\P^{n-1})}\bigg)
\cdot\pi_2^*\bigg(\frac{\E(\N Q_{\Ga_2})}{\E(T_{P_i}\P^{n-1})}\bigg)
\cdot \big(\om_{e_0;v_1}-\pi_1^*\psi_1\big) \big(\om_{e_0;v_2}-\pi_2^*\psi_1\big).
\end{split}\EE
Since $Q_{\Ga_0}$ consists of a degree~1 map,
by the last two identities in~\e_ref{hbaract_e}
\BE{omrestr_e} \om_{e_0;v_1}=\hb, \qquad \om_{e_0;v_2}=-\hb.\EE
The morphism $\th$ takes the locus $Q_{\Ga}$ to a fixed point $P_k(r)\!\in\!\ov\X_d$.
It is immediate that $k\!=\!i$.
By continuity considerations, $r\!=\!|\Ga_1|$.
Thus, by the first identity in \e_ref{Xrestr_e},
\BE{thOmrestr_e}\th^*\Om\big|_{Q_{\Ga}}=\al_i+|\Ga_1|\hb.\EE
Combining \e_ref{bndsplit3_e}-\e_ref{thOmrestr_e}, we obtain
\begin{equation}\label{Xbndlsplit_e5}\begin{split}
q^{|\Ga|}\int_{Q_{\Ga}}\!\!\!
\frac{\ne^{(\th^*\Om)z}F^*\E(\V_{n;\a}^{(|\Ga|)})|_{Q_{\Ga}}}{\E(\N Q_{\Ga})}
=\frac{\lr\a\al_i^{\ell(\a)}\ne^{\al_iz}}{\prod\limits_{k\neq i}(\al_i\!-\!\al_k)}
&\Bigg\{\ne^{|\Ga_1|\hb z}q^{|\Ga_1|}\!\! \int_{Q_{\Ga_1}} \!\!\!\!\!\!
\frac{\E(\dot\V_{n;\a}^{(|\Ga_1|)})\ev_1^*\phi_i}{\hb\!-\!\psi_1}
\Big|_{Q_{\Ga_1}}\frac{1}{\E(\N Q_{\Ga_1})}\Bigg\}\\
\times& \Bigg\{q^{|\Ga_2|}\int_{Q_{\Ga_2}} \!\!\!\!\!
\frac{\E(\dot\V_{n;\a}^{(|\Ga_2|)})\ev_1^*\phi_i}{(-\hb)\!-\!\psi_1}
\Big|_{Q_{\Ga_2}}\frac{1}{\E(\N Q_{\Ga_2})}\Bigg\}.
\end{split}\end{equation}
This identity remains valid with $|\Ga_1|\!=\!0$ and/or $|\Ga_2|\!=\!0$
if we set the corresponding integral to~1.\\

\noindent
We now sum up \e_ref{Xbndlsplit_e5} over all $\Ga\!\in\!\cA_i$.
This is the same as summing over all pairs $(\Ga_1,\Ga_2)$ of decorated strands such that
\begin{enumerate}[label=(\arabic*)]
\item $\Ga_1$ is a 2-point strand of degree $d_1\!\ge\!0$ such that the marked point~1
is attached to a vertex labeled~$i$;
\item $\Ga_2$ is a 2-point strand of degree $d_2\!\ge\!0$ such that the marked point~1
is attached to a vertex labeled~$i$.
\end{enumerate}
By the localization formula~\e_ref{ABothm_e},
\begin{equation}\label{Xbndlsplit_e7}\begin{split}
&1+\sum_{\Ga_1}(q\ne^{\hb z})^{|\Ga_1|}\Bigg\{\! \int_{Q_{\Ga_1}} \!\!\!\!\!\!
\frac{\E(\dot\V_{n;\a}^{(|\Ga_1|)})\ev_1^*\phi_i}{\hb\!-\!\psi_1}
\Big|_{Q_{\Ga_1}}\frac{1}{\E(\N Q_{\Ga_1})}\Bigg\}\\
&\hspace{1in}=1+\sum_{d=1}^{\i}(q\ne^{\hb z})^d
\int_{\ov{Q}_{0,2}(\P^{n-1},d)}\frac{\E(\dot\V_{n;\a}^{(d)})\ev_1^*\phi_i}{\hb\!-\!\psi_1}
=\cZ_{n;\a}\big(\al_i,\hb,q\ne^{\hb z}\big);\\
&1+\sum_{\Ga_2}q^{|\Ga_2|}\Bigg\{\int_{Q_{\Ga_2}} \!\!\!\!\!\!
\frac{\E(\dot\V_{n;\a}^{(|\Ga_2|)})\ev_1^*\phi_i}{(-\hb)\!-\!\psi_1}
\Big|_{Q_{\Ga_2}}\frac{1}{\E(\N Q_{\Ga_2})}\Bigg\}\\
&\hspace{1in}=1+\sum_{d=0}^{\i}q^d
\int_{\ov{Q}_{0,2}(\P^{n-1},d)}\!\!\!
\frac{\E(\dot\V_{n;\a}^{(d)})\ev_1^*\phi_i}{(-\hb)\!-\!\psi_1}
= \cZ_{n;\a}\big(\al_i,-\hb,q\big).
\end{split}\end{equation}
Finally, by~\e_ref{ABothm_e}, \e_ref{Xbndlsplit_e5}, and~\e_ref{Xbndlsplit_e7},
\begin{equation*}\begin{split}
\sum_{d=0}^{\i}q^d\!\!
\int_{\X_d'}\ne^{(\th^*\Om)z}F^*\E(\V_{n;\a}^{(d)})
&=\sum_{i=1}^n\frac{\lr\a\al_i^{\ell(\a)}\ne^{\al_iz}}{\prod\limits_{k\neq i}(\al_i\!-\!\al_k)}
\cZ_{n;\a}\big(\al_i,\hb,q\ne^{\hb z}\big)\cZ_{n;\a}\big(\al_i,-\hb,q\big)\\
&=\Phi_{\cZ_{n;\a}}(\hb,z,q),
\end{split}\end{equation*}
as claimed in~\e_ref{PhiZstr_e}.\\

\noindent
In the case of products of projective spaces and concavex sheaves~\e_ref{gensheaf_e},
the spaces
$$\ov{Q}_{0,2}(\P V\!\times\!\Pn,(1,d)) \qquad\hbox{and}\qquad
\ov\X_d=\P\big(\C^n\!\otimes\!\Sym^dV^*\big) $$
are replaced~by
$$\ov{Q}_{0,2}\big(\P V\!\times\!\P^{n_1-1}\!\times\!\ldots\!\times\!\P^{n_p-1},
(1,d_1,\ldots,d_p)\big)  \quad\hbox{and}\quad
\P\big(\C^{n_1}\!\otimes\!\Sym^{d_1}V^*\big)\!\!\times\!\ldots\!\times\!
\P\big(\C^{n_p}\!\otimes\!\Sym^{d_p}V^*\big),$$
respectively.
Lemma~\ref{PhiZstr_lmm} then becomes
\begin{equation*}\begin{split}
&\Phi_{\cZ_{n_1,\ldots,n_p;\a}}(\hb,z_1,\ldots,z_p,q_1,\ldots,q_p)
=\sum_{d_1,\ldots,d_p\ge0}\hspace{-.15in}q_1^{d_1}\ldots q_p^{d_p} \!\!
\int_{\X_{d_1,\ldots,d_p}'}\hspace{-.4in}\ne^{(\th^*\Om_1)z_1+\ldots+(\th^*\Om_p)z_p}
\pi_1^*\E(\V_{n_1,\ldots,n_p;\a}^{(d_1,\ldots,d_p)}).
\end{split}\end{equation*}
The vertices of the thick edge in Figure~\ref{Xlocus_fig}
are now labeled by a tuple $(i_1,\ldots,i_p)$ with $i_s\!\in\![n_s]$,
as needed for the extension of~\e_ref{PhiZdfn_e} described at
the end of Section~\ref{algebra_sec}.
The relation~\e_ref{thOmrestr_e} becomes
$$\th^*\Om_s\big|_{Q_{\Ga}}=\al_{s;i_s}+|\Ga_1|_s\hb\,,$$
where $|\Ga_1|_s$ is the sum of the $s$-th components of the values of~$\d$
on the vertices and edges of~$\Ga_1$ (corresponding to the degree of the maps to~$\P^{n_s-1}$).
Otherwise, the proof is identical.

\section{Proof of Theorems~\ref{equiv_thm} and~\ref{equiv0_thm}}
\label{mainpf_sec}

\noindent
This section concludes the proof of Theorem~\ref{equiv_thm} stated in Section~\ref{equivthm_sec}.
Sections~\ref{algebra_sec}-\ref{SPC_sec} reduce this theorem to
conditions on the power series~$\cY_{n;\a}$ defined in~\e_ref{cYdfn_e}; see Lemma~\ref{cYcZcomp_lmm}.
Based on qualitative, primarily algebraic, considerations,
we show in the proof of Proposition~\ref{cYcZrel_prp}
that this power series does indeed satisfy these conditions
and thus establish Theorem~\ref{equiv_thm}.
The only geometric considerations entering the proof of Proposition~\ref{cYcZrel_prp} concern
moduli spaces of stable curves $\ov\cM_{0,2|d}$, not
moduli spaces of stable quotients $\ov{Q}_{0,2}(\Pn,d)$.
We conclude this section by showing that these conditions  on~$\cY_{n;\a}$
determine certain integrals on $\ov\cM_{0,2|d}$ and
finish the proof of Theorem~\ref{equiv0_thm} stated in Section~\ref{equivthm_sec}.

\begin{crl}\label{MirSym_crl}
Let $l\!\in\!\Z^{\ge0}$, $n\!\in\!\Z^+$ and $\a\!\in\!(\Z^*)^l$.
If $|\a|\!\le\!n\!-\!2$,
$$\cZ_{n;\a}(\x,\hb,q)=\cY_{n;\a}(\x,\hb,q)
\in H_{\T}^*(\Pn)\big[\big[\hb^{-1},q\big]\big].$$
\end{crl}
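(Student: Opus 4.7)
The plan is to invoke the uniqueness statement, Proposition~\ref{uniqueness_prp}, to reduce the claimed identity to a comparison modulo~$\hb^{-2}$. The hypotheses are already in hand: by Lemma~\ref{cYrec_lmm}, Proposition~\ref{cZrec_prp}, and Proposition~\ref{cZSPC_prp}, each of $\cY_{n;\a}$ and $\cZ_{n;\a}$ is $\fC$-recursive and satisfies the self-polynomiality condition of Definition~\ref{SPC_dfn}; moreover, the constant term in~$q$ of either series evaluates to $1\in\Q_{\al}^*$ at every $\x=\al_i$, so both lie in $\Q_{\al}^*+q\cdot\Q_{\al}\Lau{\hb}[[q]]$. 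It will therefore suffice to show
$$\cY_{n;\a}(\x,\hb,q)\equiv\cZ_{n;\a}(\x,\hb,q)\equiv 1\pmod{\hb^{-2}}$$
whenever $|\a|\le n-2$, after which the identity follows upon noting that $I_{n;\a}(q)=1$ in this range, since $|\a|-\ell^-(\a)\le|\a|\le n-2<n$ forces the first branch of the definition of $I_{n;\a}$.

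For $\cY_{n;\a}$, the congruence can be read directly off the hypergeometric definition~\eqref{cYdfn_e}: the numerator of the coefficient of~$q^d$ has $\hb$-degree $|\a|d$ and the denominator has $\hb$-degree $nd$, so this coefficient behaves as $O(\hb^{(|\a|-n)d})$ at $\hb=\infty$. When $|\a|\le n-2$ and $d\ge 1$, this is $O(\hb^{-2d})\subseteq O(\hb^{-2})$, and together with the $d=0$ summand equal to~$1$ this yields the desired congruence.

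For $\cZ_{n;\a}$, I would expand $1/(\hb-\psi_1)=\sum_{r\ge 0}\hb^{-r-1}\psi_1^r$ inside~\eqref{cZdfn_e}. The only contribution to the mod~$\hb^{-2}$ part coming from $q$-degree $d\ge 1$ is the $\hb^{-1}$ piece $\ev_{1*}\E(\dot\V_{n;\a}^{(d)})$. By Proposition~\ref{prodsmooth_prp}, $\ov{Q}_{0,2}(\Pn,d)$ is smooth of the expected dimension $nd+n-2$, so $\ev_1$ has relative dimension $nd-1$ over~$\Pn$; and Riemann--Roch on the genus-zero fibers (as in the rank discussion preceding~\eqref{Vprdfn_e}) gives $\rk\dot\V_{n;\a}^{(d)}=|\a|d$. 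Hence this pushforward lies in $H^{2((|\a|-n)d+1)}_{\T}(\Pn)$, which sits in strictly negative cohomological degree when $|\a|\le n-2$ and $d\ge 1$, so it vanishes.

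I do not expect a serious obstacle: the heavy lifting has already been packaged into the $\fC$-recursivity and self-polynomiality statements proved in Sections~\ref{SQlocal_sec}--\ref{SPC_sec}, and what remains is a routine degree-and-dimension count on each side. The mildly delicate point is the dimensional vanishing for $\cZ_{n;\a}$, which crucially relies on the smoothness of $\ov{Q}_{0,2}(\Pn,d)$ established in Proposition~\ref{prodsmooth_prp}, so that $H^{<0}_{\T}(\Pn)=0$ applies to the pushforward class rather than merely to its restrictions.
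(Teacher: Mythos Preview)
Your proof is correct and follows the same approach as the paper: invoke Proposition~\ref{uniqueness_prp} after verifying the $\fC$-recursivity and self-polynomiality hypotheses via Lemma~\ref{cYrec_lmm} and Propositions~\ref{cZrec_prp}--\ref{cZSPC_prp}, then check that both series are $\equiv 1 \pmod{\hb^{-2}}$ by the obvious degree count on~$\cY_{n;\a}$ and a dimension count for~$\cZ_{n;\a}$. The paper phrases the latter as the single inequality $\dim\ov{Q}_{0,2}(\Pn,d)-\rk\dot\V_{n;\a}^{(d)}=(n-|\a|)d+(n-2)>n-1=\dim\Pn$, while you unpack the $\hb^{-1}$-coefficient explicitly and track its equivariant degree; these are the same computation.
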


\begin{proof}
Both sides of this identity are $\fC$-recursive and satisfy the self-polynomiality condition
(no matter what $n$ and $\a$ are);
see Lemma~\ref{cYrec_lmm} and Propositions~\ref{cZrec_prp} and~\ref{cZSPC_prp}.
By~\e_ref{cYdfn_e},
$$\cY_{n;\a}(\x,\hb,q)\cong1 \quad\mod\hb^{-2}\,,$$
whenever $|\a|\!\le\!n\!-\!2$.
If in addition $d\!\in\!\Z^+$,
$$\dim\ov{Q}_{0,2}(\Pn,d)-\rk\,\dot\V_{n;\a}^{(d)}
=(n\!-\!|\a|)d+(n\!-\!2)>n\!-\!1=\dim\Pn.$$
Thus,
$$\cZ_{n;\a}(\x,\hb,q)\cong1 \quad\mod\hb^{-2}\,,$$
whenever $|\a|\!\le\!n\!-\!2$.
The claim now follows from Proposition~\ref{uniqueness_prp}.
\end{proof}

\begin{lmm}\label{cYcZcomp_lmm}
If $l\!\in\!\Z^{\ge0}$, $n\!\in\!\Z^+$, and $\a\!\in\!(\Z^*)^l$ are such that $|\a|\!\le\!n$,
then
\BE{equivMS_e2}\cZ_{n;\a}(\x,\hb,q)=\frac{\cY_{n;\a}(\x,\hb,q)}{I_{n;\a}(q)}
\in H_{\T}^*(\Pn)\big[\big[\hb^{-1},q\big]\big]\EE
if and only if
\BE{Yrec_e}\begin{split}
&\Rs{\hb=0}\big\{\hb^r\cY_{n;\a}(\al_i,\hb,q)\big\}\\
&\qquad=\sum_{d=1}^{\i}\frac{q^{d}}{d!}\sum_{b=0}^{d-1-r}\!\!
\Bigg(\bigg(\int_{\ov\cM_{0,2|d}}\frac{\E(\dot\V_{\a}^{(d)}(\al_i))\psi_1^r\psi_2^b}
{\prod\limits_{k\neq i}\E(\dot\V_1^{(d)}(\al_i\!-\!\al_k))}\bigg)
\Rs{\hb=0}\bigg\{\frac{(-1)^b}{\hb^{b+1}}\cY_{n;\a}(\al_i,\hb,q)\bigg\}\Bigg)
\end{split}\EE
for all $i\!\in\![n]$ and $r\!\in\!\Z^{\ge0}$.
\end{lmm}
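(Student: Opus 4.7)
The plan is to invoke the general uniqueness principle for $\fC$-recursive series by reducing the identity (\ref{equivMS_e2}) to the matching of the secondary coefficients in the recursion \eqref{recurdfn_e2}. First note that both $\cZ_{n;\a}(\x,\hb,q)$ and $\cY_{n;\a}(\x,\hb,q)/I_{n;\a}(q)$ are $\fC$-recursive: for $\cZ_{n;\a}$ this is Proposition \ref{cZrec_prp}, and for $\cY_{n;\a}/I_{n;\a}$ it follows from Lemma \ref{cYrec_lmm} combined with the observation that $I_{n;\a}(q)\!\in\!1\!+\!q\Q[[q]]$ is a unit independent of~$\hb$, so division by $I_{n;\a}(q)$ preserves the $\fC$-recursive form term by term in \eqref{recurdfn_e2}. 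By the description \eqref{uniquecond_e} of $H_{\T}^*(\Pn)$, the identity (\ref{equivMS_e2}) is equivalent to the $n$ fiberwise equalities $\cZ_{n;\a}(\al_i,\hb,q)\!=\!\cY_{n;\a}(\al_i,\hb,q)/I_{n;\a}(q)$ for $i\!\in\![n]$.

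Since both fiberwise series are $\fC$-recursive with the same collection $\fC$, they coincide if and only if all their secondary coefficients $\F_i^r(d)$ in \eqref{recurdfn_e2} coincide. For $r\!>\!0$ both vanish, and for $r\!=\!0$ both equal $\de_{0d}$ (since $\sum_d\cY_i^0(d)q^d\!=\!I_{n;\a}(q)$ by Lemma \ref{cYrec_lmm}). So matching reduces to the coefficients with $r\!<\!0$. By Lemma \ref{cYrec_lmm} and the $\hb$-independence of $I_{n;\a}$,
\[\sum_{d=0}^{\i}(\cY/I)_i^r(d)\,q^d\;=\;I_{n;\a}(q)^{-1}\,\Rs{\hb=0}\big\{\hb^{-r-1}\cY_{n;\a}(\al_i,\hb,q)\big\},\]
while Proposition \ref{cZrec_prp} gives a self-referential expression for $\sum_d\cZ_i^r(d)q^d$ in terms of residues of $\cZ_{n;\a}$ itself and of integrals over $\ov\cM_{0,2|d}$.

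The forward implication is then direct. Reindexing via $r\!=\!-r'\!-\!1$ with $r'\!\in\!\Z^{\ge 0}$ identifies the exponent $\psi_1^{-r-1}$ in Proposition \ref{cZrec_prp} with $\psi_1^{r'}$ and the upper summation limit $b\!\le\!d\!+\!r$ with $b\!\le\!d\!-\!1\!-\!r'$ appearing in (\ref{Yrec_e}). Substituting $\cZ_{n;\a}\!=\!\cY_{n;\a}/I_{n;\a}$ into the right-hand side of Proposition \ref{cZrec_prp}, pulling the factor $I_{n;\a}(q)^{-1}$ past $\Rs{\hb=0}$ (which it commutes past since it does not involve $\hb$), equating with the displayed expression for $(\cY/I)_i^r(d)$, and canceling the common factor $I_{n;\a}(q)^{-1}$ yields (\ref{Yrec_e}) for $r\!=\!r'$. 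For the converse, I would argue by induction on the $q$-degree: the constant terms of $\cZ_{n;\a}(\al_i,\hb,q)$ and $\cY_{n;\a}(\al_i,\hb,q)/I_{n;\a}(q)$ both equal~$1$, and if the two series agree modulo $q^d$, then the $q^d$-coefficient of the right-hand side of Proposition \ref{cZrec_prp} involves only coefficients of $\cZ_{n;\a}$ of degree strictly less than $d$ (the $d'\!=\!d$ summand in that sum uses only $\coeff{\cZ_{n;\a}(\al_i,\hb,q)}_{q;0}\!=\!1$); substituting $\cY_{n;\a}/I_{n;\a}$ for $\cZ_{n;\a}$ there and applying (\ref{Yrec_e}) after the analogous $I_{n;\a}$-cancellation identifies $\cZ_i^r(d)$ with $(\cY/I)_i^r(d)$, completing the induction.

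No serious obstacle is anticipated, since the argument is essentially algebraic unwinding. The main subtlety is the careful bookkeeping of the index shift $r\!\leftrightarrow\!-r\!-\!1$ between the $\fC$-recursive secondary coefficient (indexed by a negative power of $\hb$) and the exponent of $\hb$ appearing inside the residue of (\ref{Yrec_e}), together with the verification that the right-hand side of Proposition \ref{cZrec_prp} is genuinely self-consistent in $q$-degree, which is precisely what enables the inductive step in the converse direction.
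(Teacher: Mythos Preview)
Your proposal is correct and follows essentially the same approach as the paper: both arguments reduce \eqref{equivMS_e2} to the matching of secondary coefficients in the $\fC$-recursion, invoke Lemma~\ref{cYrec_lmm} for the $\cY_{n;\a}/I_{n;\a}$ side and Proposition~\ref{cZrec_prp} for the $\cZ_{n;\a}$ side, and exploit that the latter determines $\cZ_i^r(d)$ from strictly lower $q$-degrees to run an induction. You are somewhat more explicit than the paper about the index shift $r\mapsto -r-1$ and about the inductive step in the converse direction, but the logical content is the same.
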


\begin{proof}
Since both sides of~\e_ref{equivMS_e2} are $\fC$-recursive
with the {\it same} collection~\e_ref{Cdfn_e} of the primary coefficients
(see Lemma~\ref{cYrec_lmm} and Proposition~\ref{cZrec_prp}) and
have the same $q^0$-coefficients, \e_ref{equivMS_e2} holds if and only if
the secondary coefficients
$$\frac{1}{I_{n;\a}(q)}\sum_{d=0}^{\i}\cY_i^r(d)q^d
\qquad\hbox{and}\qquad
\sum_{d=0}^{\i}\cZ_i^r(d)q^d,$$
instead of~$\F_i^r(d)$, in the recursions~\e_ref{recurdfn_e2}
for $\cY_{n;\a}/I_{n;\a}$ and~$\cZ_{n;\a}$ are the same
(this would make the two recursions the same).
Since Proposition~\ref{cZrec_prp} describes the coefficients~$\cZ_i^r(d)$ recursively
on~$d$, \e_ref{equivMS_e2} holds if and only if
the coefficients~$\cY_i^r(d)$ satisfy the same description.
By Lemma~\ref{cYrec_lmm} and Proposition~\ref{cZrec_prp},
this is the case if and only if \e_ref{Yrec_e} holds
($r$ in Lemma~\ref{cYrec_lmm} and Proposition~\ref{cZrec_prp} corresponds
to $-r\!-\!1$ in the notation of~\e_ref{Yrec_e}).
\end{proof}

\begin{prp}\label{cYcZrel_prp}
If $l\!\in\!\Z^{\ge0}$, $n\!\in\!\Z^+$, and $\a\!\in\!(\Z^*)^l$, then
\BE{cYcZrel_e}\begin{split}
&\Rs{\hb=0}\big\{\hb^r\cY_{n;\a}(\al_i,\hb,q)\big\}\\
&\qquad=\sum_{d=1}^{\i}\frac{q^d}{d!}\sum_{b=0}^{d-1-r}\!\!
\Bigg(\bigg(\int_{\ov\cM_{0,2|d}}\frac{\E(\dot\V_{\a}^{(d)}(\al_i))\psi_1^r\psi_2^b}
{\prod\limits_{k\neq i}\E(\dot\V_1^{(d)}(\al_i\!-\!\al_k))}\bigg)
\Rs{\hb=0}\bigg\{\frac{(-1)^b}{\hb^{b+1}}\cY_{n;\a}(\al_i,\hb,q)\bigg\}\Bigg)
\end{split}\EE
for all  $i\!\in\![n]$ and $r\!\in\!\Z^{\ge0}$.
\end{prp}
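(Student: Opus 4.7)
The plan is to first establish \e_ref{cYcZrel_e} in the range $|\a|\!\le\!n\!-\!2$, and then to extend it to arbitrary $(n,\a)$ by showing that the validity of the identity is, in an appropriate precise sense, \emph{independent of~$n$}. For the base case, whenever $|\a|\!\le\!n\!-\!2$, Corollary~\ref{MirSym_crl} gives $\cZ_{n;\a}\!=\!\cY_{n;\a}$ and $I_{n;\a}(q)\!\equiv\!1$, so the mirror identity \e_ref{equivMS_e2} is trivial in this range; Lemma~\ref{cYcZcomp_lmm} then immediately yields \e_ref{Yrec_e}, which is precisely \e_ref{cYcZrel_e} for $(n,\a)$ with $|\a|\!\le\!n\!-\!2$.

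The heart of the argument is a passage step showing that \e_ref{cYcZrel_e} for $(n\!+\!1,\a)$ is equivalent to \e_ref{cYcZrel_e} for $(n,\a)$, so that the identity may propagate freely in~$n$. I would analyze how each side of~\e_ref{cYcZrel_e} transforms under the replacement $n\!\to\!n\!+\!1$. On the LHS, the $q^d$-coefficient of $\cY_{n+1;\a}(\al_i,\hb,q)$ is obtained from that of $\cY_{n;\a}(\al_i,\hb,q)$ by multiplication by $\prod_{r=1}^d(\al_i\!-\!\al_{n+1}\!+\!r\hb)^{-1}$, which is a Taylor series in $\hb$ at $\hb\!=\!0$; thus every Laurent coefficient $\Rs{\hb=0}\{\hb^{-m-1}\cY_{n+1;\a}(\al_i,\hb,q)\}$ is the convolution of the corresponding coefficients of $\cY_{n;\a}$ against the Taylor coefficients of this factor. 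On the RHS, the integrand on $\ov\cM_{0,2|d}$ picks up the extra factor $\E(\dot\V_1^{(d)}(\al_i\!-\!\al_{n+1}))^{-1}$. I would then show that, after expanding this euler-class reciprocal in powers of the psi classes and evaluating the resulting integrals via (an extension of) Lemma~\ref{M02_lmm}, the new RHS contributions reassemble exactly into the convolution appearing on the LHS, so that \e_ref{cYcZrel_e} for $(n\!+\!1,\a)$ is equivalent to \e_ref{cYcZrel_e} for $(n,\a)$.

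Given the passage step, for any fixed tuple $\a$ one chooses $n^*\!\ge\!|\a|\!+\!2$ to place oneself in the base case, and then iterates the passage step to obtain \e_ref{cYcZrel_e} for all $(n,\a)$ (in both directions, i.e.~for smaller and larger $n$). The principal obstacle is the matching in the passage step: one must carry out an explicit comparison between the $\hb$-Taylor expansion of $\prod_{r=1}^d(\al_i\!-\!\al_{n+1}\!+\!r\hb)^{-1}$ on the LHS and the integration of $\E(\dot\V_1^{(d)}(\al_i\!-\!\al_{n+1}))^{-1}$ against the remaining data on $\ov\cM_{0,2|d}$ on the RHS. I anticipate that this comparison reduces to an explicit polynomial description of $\E(\dot\V_1^{(d)}(\beta))$ in terms of $\beta$ and the universal cotangent classes $\psi_1,\psi_2,\hat\psi_1,\ldots,\hat\psi_d$ on $\ov\cM_{0,2|d}$, at which point Lemma~\ref{M02_lmm} reduces the entire comparison to a combinatorial identity in the $\al_i\!-\!\al_{n+1}$ variable.
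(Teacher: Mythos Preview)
Your high-level strategy coincides with the paper's: establish \e_ref{cYcZrel_e} for $|\a|\!\le\!n\!-\!2$ via Corollary~\ref{MirSym_crl} and Lemma~\ref{cYcZcomp_lmm}, then show that the validity of the identity is independent of~$n$. The divergence is in how the $n$-independence is implemented.

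The paper does not argue inductively in~$n$. It instead computes the Euler classes in the integrand explicitly: with $\De_{st}\!\in\!H^2(\ov\cM_{0,2|d})$ the class of the diagonal $\{\hat y_s\!=\!\hat y_t\}$ and $\De_s\!=\!\sum_{t>s}\De_{st}$, one has
\[
\E\big(\dot\V_1^{(d)}(\al_i\!-\!\al_k)\big)
=(\al_i\!-\!\al_k)^d\prod_{s=1}^d\big(1-\be_k\hat\psi_s+\be_k\De_s\big),
\qquad \be_k\equiv(\al_i\!-\!\al_k)^{-1},
\]
and similarly for $\E(\dot\V_{\a}^{(d)}(\al_i))$. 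After pulling out an overall constant, both the integrals on the RHS of~\e_ref{cYcZrel_e} and the Laurent coefficients on the LHS become polynomials in $y\!=\!\al_i^{-1}$ and in the elementary symmetric polynomials $\fs_1,\fs_2,\ldots$ of the~$\be_k$'s, and these polynomials are manifestly independent of~$n$. Hence~\e_ref{cYcZrel_e} is equivalent to a polynomial identity not involving~$n$; validity for large~$n$ (the base case) then gives it for all~$n$ at once, with no explicit $n\!\leftrightarrow\!n\!+\!1$ matching required.

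Your proposed passage step has a concrete gap. You anticipate that $\E(\dot\V_1^{(d)}(\be))$ can be described in terms of $\psi_1,\psi_2,\hat\psi_1,\ldots,\hat\psi_d$ alone, after which Lemma~\ref{M02_lmm} would evaluate the integrals. But as the formula above shows, $\E(\dot\V_1^{(d)}(\be))$ genuinely involves the diagonal classes~$\De_s$, and so does $\E(\dot\V_{\a}^{(d)}(\al_i))$. Lemma~\ref{M02_lmm} computes only monomials in $\psi_1,\psi_2,\hat\psi_j$ and says nothing about products containing~$\De_s$. In particular, the extra factor $\E(\dot\V_1^{(d)}(\al_i\!-\!\al_{n+1}))^{-1}$ on the RHS does not decouple from the rest of the integrand, so its effect cannot be isolated as a combinatorial identity in the single variable $\al_i\!-\!\al_{n+1}$. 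Note also that the RHS of~\e_ref{cYcZrel_e} depends on~$n$ in two places, not one: both the $\ov\cM_{0,2|d}$-integral \emph{and} the residue factor $\Rs{\hb=0}\{(-1)^b\hb^{-b-1}\cY_{n;\a}(\al_i,\hb,q)\}$ change under $n\!\to\!n\!+\!1$, and your sketch does not address how their simultaneous changes combine. The symmetric-function packaging in the paper's argument is precisely what circumvents these difficulties.
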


\begin{proof} Let $i\!\in\![n]$ be fixed throughout the proof.\\

\noindent
(1) Whenever $d\!\in\!\Z^+$ and $s,t\!\in\![d]$,
where $[d]\!=\!\{1,\ldots,d\}$ as before, let
$$\De_{st}=\big\{[\cC,y_1,y_2,\hat{y}_1,\ldots,\hat{y}_d]\!\in\!\ov\cM_{0,2|d}\!:~
\hat{y}_s\!=\!\hat{y}_t\big\}\in H^2\big(\ov\cM_{0,2|d}\big)$$
denote the class of the corresponding ``diagonal" and define
$$\De_s=\sum_{t=s+1}^d\!\De_{st}\in H^2(\ov\cM_{0,2|d}).$$
For each $a_k\!>\!0$, $s\!\in\![d]$, and $r\!\in\![a_k]$, there is a short
exact sequence
\begin{equation*}\begin{split}
0\lra R^0\pi_*\O\bigg((r\!-\!1)\hat\si_s+\sum_{t=s+1}^d\!\!a_k\hat\si_t-\si_1\bigg)
&\lra R^0\pi_*\O\bigg(r\hat\si_s+\sum_{t=s+1}^d\!\!a_k\hat\si_t-\si_1\bigg)\\
&\lra R^0\pi_*\O\bigg(\bigg(r\hat\si_s+\sum_{t=s+1}^d\!\!a_k\hat\si_t
-\si_1\bigg)\bigg|_{\hat\si_s}\bigg)
\lra 0.
\end{split}\end{equation*}
This gives
\BE{Vplus_e}\begin{split}
a_k\!>\!0\qquad\Lra\qquad \E(\dot\V_{a_k}^{(d)}(\al_i))
&=\prod_{s=1}^d\prod_{r=1}^{a_k}\big(a_k\al_i-r\hat\psi_s+a_k\De_s\big)\\
&=a_k^{a_kd}\al_i^{a_kd}
\prod_{s=1}^d\prod_{r=1}^{a_k}\big(1-\frac{r}{a_k}\al_i^{-1}\hat\psi_s+\al_i^{-1}\De_s\big).
\end{split}\EE
For each $a_k\!<\!0$, $s\!\in\![d]$, and $r\!=\!0,1,\ldots,-a_k\!-\!1$,
there is a short exact sequence
\begin{equation*}\begin{split}
0\lra
R^0\pi_*\O\bigg(\bigg(-r\hat\si_s+\sum_{t=s+1}^d\!\!a_k\hat\si_t-\si_1\bigg)\bigg|_{\hat\si_s}\bigg)
&\lra R^1\pi_*\O\bigg((-r\!-\!1)\hat\si_s+\sum_{t=s+1}^d\!\!a_k\hat\si_t-\si_1\bigg)\\
&\lra R^1\pi_*\O\bigg(-r\hat\si_s+\sum_{t=s+1}^d\!\!a_k\hat\si_t-\si_1\bigg)\lra0.
\end{split}\end{equation*}
This gives
\BE{Vminus_e}\begin{split}
a_k\!<\!0 \qquad\Lra\qquad \E(\dot\V_{a_k}^{(d)}(\al_i))
&=\prod_{s=1}^d\prod_{r=0}^{-a_k-1}\!\!\big(a_k\al_i+r\hat\psi_s+a_k\De_s\big)\\
&=a_k^{-a_kd}\al_i^{-a_kd}
\prod_{s=1}^d\prod_{r=0}^{a_k-1}\big(1+\frac{r}{a_k}\al_i^{-1}\hat\psi_s+\al_i^{-1}\De_s\big).
\end{split}\EE
Similarly to \e_ref{Vplus_e},
\BE{Vdenom_e}
\E(\dot\V_1^{(d)}(\al_i\!-\!\al_k))
=(\al_i\!-\!\al_k)^d\prod_{s=1}^d\big(1-(\al_i\!-\!\al_k)^{-1}\hat\psi_s
+(\al_i\!-\!\al_k)^{-1}\De_s\big).\EE
(2) For $d\!\in\!\Z^{\ge0}$, let
$$C_i(\al)=\frac{\prod\limits_{a_k>0}\big(a_k^{a_k}\al_i^{a_k}\big)
\prod\limits_{a_k<0}\big(a_k^{-a_k}\al_i^{-a_k}\big)}
{\prod\limits_{k\neq i}(\al_i\!-\!\al_k)}\,.$$
We denote by $\fs_1,\fs_2,\ldots$ the elementary symmetric polynomials in
$$\{\be_k\}=\big\{(\al_i\!-\!\al_k)^{-1}\!:~k\!\neq\!i\big\}$$
for any given number of formal variables~$\be_k$.
Note that
\BE{cHnums_e}\begin{split}
 \frac{(-1)^b}{d!} \int_{\ov\cM_{0,2|d}}
\frac{\prod\limits_{a_k>0}\prod\limits_{s=1}^d\prod\limits_{r=1}^{a_k}
\!\!(1-\frac{r}{a_k}y\hat\psi_s+y\De_s)~
\prod\limits_{a_k<0}\prod\limits_{s=1}^d\prod\limits_{r=0}^{a_k-1}
\!\!(1+\frac{r}{a_k}y\hat\psi_s+y\De_s)\,\,\psi_1^r\psi_2^b}
{\prod\limits_{k=1}^{n-1}\prod\limits_{s=1}^d(1\!-\!\be_k\hat\psi_s+\!\be_k\De_s)}&\\
=\H_{\a;d}^{r,b}(y,\fs_1,\ldots,\fs_{d-1})
\in \Q[y,\be_1,\ldots,\be_{n-1}]&
\end{split}\EE
for some $\H_{\a;d}^{r,b}\!\in\!\Q[y,\fs_1,\ldots,\fs_{d-1}]$,
independent of~$n$.
Such $\H_{\a;d}^{r,b}$ exists because the integrand on the left-hand side of~\e_ref{cHnums_e}
is symmetric in $\{\be_k\}$ and whatever $\H_{\a;d}^{r,b}$ works for $n\!\ge\!d\!-\!r\!-\!b$
works for all~$n$ (this can be seen by setting the extra $\be_k$'s to~0).
By \e_ref{Vplus_e}-\e_ref{cHnums_e},
\BE{RHSexp_e1}
\frac{(-1)^b}{d!}\int_{\ov\cM_{0,2|d}}\frac{\E(\dot\V_{\a}^{(d)}(\al_i))\psi_1^r\psi_2^b}
{\prod\limits_{k\neq i}\E(\dot\V_1^{(d)}(\al_i\!-\!\al_k))}
=C_i(\al)^d\, \H_{\a;d}^{r,b}(\al_i^{-1},\fs_1,\ldots,\fs_{d-1})
\quad\forall\,d\!\in\!\Z^{\ge0}.\EE
Similarly, for any $d,d'\!\in\!\Z^{\ge0}$
there exists $\cY_{\a;d,d'}\!\in\!\Q[y,\fs_1,\ldots,\fs_{d'}]$, independent of~$n$,
such~that
\BE{cYnums_e}
\left\llbracket\frac{\prod\limits_{a_k>0}\prod\limits_{r=1}^{a_kd}\!\!(1\!+\!\frac{r}{a_k}y\hb)
\prod\limits_{a_k<0}\!\!\prod\limits_{r=0}^{-a_kd-1}\!\!(1\!-\!\frac{r}{a_k}ry\hb)}
{d!\prod\limits_{r=1}^d\prod\limits_{k=1}^{n-1}(1\!+\!r\be_k\hb)}
\right\rrbracket_{\hb;d'}
=\cY_{\a;d,d'}(y,\fs_1,\ldots,\fs_{d'})\,.\EE
By \e_ref{cYdfn_e} and~\e_ref{cYnums_e},
\BE{RHSexp_e2}
\left\llbracket \hb^d \left\llbracket
\cY_{n;\a}(\al_i,\hb,q)\right\rrbracket_{q;d}\right\rrbracket_{\hb;d'}
=C_i(\al)^d\, \cY_{\a;d,d'}(\al_i^{-1},\fs_1,\ldots,\fs_{d'})
\quad\forall\,d,d'\!\in\!\Z^{\ge0}.\EE
(3) By \e_ref{RHSexp_e1} and \e_ref{RHSexp_e2},
\e_ref{cYcZrel_e} is equivalent~to
\BE{cHcY_e}
\cY_{\a;d,d-1-r}(y,\fs_1,\fs_2,\ldots)
=\!\sum_{\begin{subarray}{c}d_1+d_2=d\\ d_1\ge1\end{subarray}}
\!\!\!\sum_{b=0}^{d_1-1-r}\!\!\!
\H_{\a;d_1}^{r,b}(y,\fs_1,\fs_2,\ldots)
\cY_{\a;d_2,d_2+b}(y,\fs_1,\fs_2,\ldots)~~~\forall\,d\!\in\!\Z^+\,.\EE
This equivalence is obtained by taking the coefficients of $q^d$ of the
two sides of~\e_ref{cYcZrel_e}, factoring out $C_i(\al)^d$,
replacing $\al_i^{-1}$ by $y$ and $\{(\al_i\!-\!\al_k)^{-1}\!:k\!\neq\!i\}$ by
$\{\be_1,\ldots,\be_{n-1}\}$.
By Lemma~\ref{cYcZcomp_lmm} and Corollary~\ref{MirSym_crl},
\e_ref{cHcY_e} holds whenever $|\a|\!\le\!n\!-\!2$.
Since~\e_ref{cHcY_e} does not involve~$n$, it holds for all~$\a$.
Thus, \e_ref{cYcZrel_e} holds for all pairs~$(n,\a)$.
\end{proof}

\begin{proof}[Proof of Theorem~\ref{equiv0_thm}]
For each $d\!\in\!\Z^+$, denote by $D_{1\hat{1};2}\subset\ov\cM_{0,2|d}$
the divisor whose general element is a two-component rational curve,
with one of the components carrying the marked point~1 and the fleck~$\hat{1}$
and the other component carrying the marked point~2.
The second component must then carry at least one of the remaining flecks.
The irreducible components~$D_{1\hat{1};2I}$ of~$D_{1\hat{1};2}$
thus correspond to the nonempty subsets~$I$ of $\{2,\ldots,d\}$
indexing the flecks on the second component.
There is a natural isomorphism
\BE{D112split_e}D_{1\hat{1};2I}\approx
\ov\cM_{0,2|(d-|I|)}\times \ov\cM_{0,2||I|}.\EE
If $\pi_1,\pi_2$ are the two component projection maps,
\BE{D112rest_e}\begin{split}
\psi_i\big|_{D_{1\hat{1};2I}}&=\pi_i^*\psi_i \qquad i=1,2,\\
\E\big(\dot\V_{\a}^{(d)}(\be)\big)\big|_{D_{1\hat{1};2I}}
&=
\pi_1^*\E\big(\dot\V_{\a}^{(d-|I|)}(\be)\big)\cdot
\pi_2^*\E\big(\dot\V_{\a}^{(|I|)}(\be)\big).
\end{split}\EE
On the other hand, by the first identity in \e_ref{psipullback_e} and induction on $d$,
\BE{TRR_e} \psi_2=D_{1\hat{1};2}\in H^2(\ov\cM_{0,2|d}).\EE
By \e_ref{D112split_e}-\e_ref{TRR_e},
\BE{Intspli_e}\begin{split}
&\int_{\ov\cM_{0,2|d}}\frac{\E(\dot\V_{\a}^{(d)}(\al_i))\psi_1^{b_1}\psi_2^{b_2}}
{\prod\limits_{k\neq i}\!\E(\dot\V_1^{(d)}(\al_i\!-\!\al_k))}
=\int_{D_{1\hat{1};2}}\frac{\E(\dot\V_{\a}^{(d)}(\al_i))\psi_1^{b_1}\psi_2^{b_2-1}}
{\prod\limits_{k\neq i}\!\E(\dot\V_1^{(d)}(\al_i\!-\!\al_k))} \\
&\hspace{.3in}
=\sum_{\begin{subarray}{c}d_1,d_2\ge1\\ d_1+d_2=d\end{subarray}}
\!\!\binom{d\!-\!1}{d_1\!-\!1}
\Bigg(\int_{\ov\cM_{0,2|d_1}}\frac{\E(\dot\V_{\a}^{(d_1)}(\al_i))\psi_1^{b_1}}
{\prod\limits_{k\neq i}\!\E(\dot\V_1^{(d_1)}(\al_i\!-\!\al_k))}\Bigg)
\Bigg(\int_{\ov\cM_{0,2|d_2}}\frac{\E(\dot\V_{\a}^{(d_2)}(\al_i))\psi_2^{b_2-1}}
{\prod\limits_{k\neq i}\!\E(\dot\V_1^{(d_2)}(\al_i\!-\!\al_k))}\Bigg)
\end{split}\EE
whenever $b_2\!\in\!\Z^+$.\\

\noindent
For any $b_1,b_2\in\Z^{\ge0}$, let
$$\F_{n;\a}^{(b_1,b_2)}(\al_i,q)=
\sum_{d=1}^{\i}\frac{q^d}{d!}
\int_{\ov\cM_{0,2|d}}\frac{\E(\dot\V_{\a}^{(d)}(\al_i))\psi_1^{b_1}\psi_2^{b_2}}
{\prod\limits_{k\neq i}\E(\dot\V_1^{(d)}(\al_i\!-\!\al_k))}
\in q\Q_{\al}[[q]].$$
By \e_ref{Intspli_e},
\BE{Intspli_e2}
D\F_{n;\a}^{(b_1,b_2)}(\al_i,q)= D\F_{n;\a}^{(b_1,0)}(\al_i,q)\cdot \F_{n;\a}^{(0,b_2-1)}(\al_i,q)
 \qquad\forall\,b_2\!\in\!\Z^+\,,\EE
where $D\F\equiv q\frac{\nd}{\nd q}\F$.
By induction on $b_2$, this gives
$$\F_{n;\a}^{(0,b_2)}(\al_i,q)= \frac{1}{(b_2\!+\!1)!}\F_{n;\a}^{(0,0)}(\al_i,q)^{b_2+1}\,.$$
Combining this with \e_ref{Intspli_e2} and using symmetry, we obtain
\begin{alignat}{1}
D\F_{n;\a}^{(b_1,b_2)}(\al_i,q)&=
\frac{1}{b_1!}\F_{n;\a}^{(0,0)}(\al_i,q)^{b_1}D\F_{n;\a}^{(0,0)}(\al_i,q)\cdot
\frac{1}{b_2!}\F_{n;\a}^{(0,0)}(\al_i,q)^{b_2} \qquad\Lra\notag\\
\label{Fb1bd2_e}
\F_{n;\a}^{(b_1,b_2)}(\al_i,q)&=\frac{1}{(b_1\!+\!b_2\!+\!1)!}\binom{b_1\!+\!b_2}{b_1}
\F_{n;\a}^{(0,0)}(\al_i,q)^{b_1+b_2+1}.
\end{alignat}
Thus, the $r\!=\!0$ case of \e_ref{cYcZrel_e} is equivalent to
\BE{YRs_e}\Rs{\hb=0}\bigg\{\ne^{-\frac{\F_{n;\a}^{(0,0)}(\al_i,q)}{\hb}}
\cY_{n;\a}(\al_i,\hb,q)\bigg\}=0.\EE
By \cite[Section~2.1]{bcov1}, this relation determines
$\F_{n;\a}^{(0,0)}(\al_i,q)\!\in\!q\Q_{\al}[[q]]$ uniquely.
Thus, by \cite[Remark~4.5]{g0ci},
$\F_{n;\a}^{(0,0)}(\al_i,q)\!=\!\xi_{n;\a}(\al_i;q)$.\footnote{Only the case $\ell^-(\a)\!=\!0$ is considered in \cite{g0ci}, but the same reasoning applies in all cases.}
It follows that~\e_ref{Fb1bd2_e} is equivalent to the identity
in Theorem~\ref{equiv0_thm}.
\end{proof}

\begin{rmk} By~\e_ref{Fb1bd2_e},
for any $r^*\!\in\!\Z^{\ge0}$ the set of equations~\e_ref{cYcZrel_e}
with $r\!=\!0,1,\ldots,r^*$ is an invertible linear combination of
the set of relations
$$\Rs{\hb=0}\bigg\{\hb^r\ne^{-\frac{\F_{n;\a}^{(0,0)}(\al_i,q)}{\hb}}
\cY_{n;\a}(\al_i,\hb,q)\bigg\}=0, \qquad  r\!=\!0,1,\ldots,r^*.$$
Thus, by~\e_ref{Fb1bd2_e}, the statement of Proposition~\ref{cYcZrel_prp}
is equivalent to the condition that
the coefficients of the power series
$$\ne^{-\F_{n;\a}^{(0,0)}(\al_i,q)/\hb}\cY_{n;\a}(\al_i,\hb,q)\in\Q_{\al}(\hb)[[q]]$$
are regular at $\hb\!=\!0$.
This is indeed the case for $\F_{n;\a}^{(0,0)}(\al_i,q)\!=\!\xi_{n;\a}(\al_i;q)$
by \cite[Remark~4.5]{g0ci}.
\end{rmk}

\begin{rmk}
The above approach can be used to eliminate $\psi$-classes from twisted integrals
over $\ov\cM_{0,m|d}$ with $m\!\ge\!3$.
For example, let
$$\F_{n;\a}^{(b_1,b_2,b_3)}(\al_i,q)=
\sum_{d=0}^{\i}\frac{q^d}{d!}
\int_{\ov\cM_{0,3|d}}\frac{\E(\dot\V_{\a}^{(d)}(\al_i))\psi_1^{b_1}\psi_2^{b_2}\psi_3^{b_3}}
{\prod\limits_{k\neq i}\E(\dot\V_1^{(d)}(\al_i\!-\!\al_k))}.$$
Using $\psi_3\!=\!D_{12;3}$ on $\ov\cM_{0,3|d}$, we find that
\begin{gather*}
\F_{n;\a}^{(b_1,b_2,b_3)}(\al_i,q)=\F_{n;\a}^{(b_1,b_2,0)}(\al_i,q)
\cdot\F_{n;\a}^{(0,b_3-1)}(\al_i,q) \qquad\forall\,b_3\!\in\!\Z^+\\
\Lra\qquad
\F_{n;\a}^{(b_1,b_2,b_3)}(\al_i,q)=\frac{\xi_{n;\a}(\al_i,q)^{b_1+b_2+b_3}}{b_1!b_2!b_3!}
\F_{n;\a}^{(0,0,0)}(\al_i,q).
\end{gather*}
Multiplying the last equation by $\hb_1^{-b_1-1}\hb_2^{-b_2-1}\hb_3^{-b_3-1}$
and summing over $b_1,b_2,b_3\!\ge\!0$, we obtain
\begin{equation*}\begin{split}
&\sum_{d=0}^{\i}\frac{q^d}{d!}
\int_{\ov\cM_{0,3|d}}\frac{\E(\dot\V_{\a}^{(d)}(\al_i))}
{\prod\limits_{k\neq i}\E(\dot\V_1^{(d)}(\al_i\!-\!\al_k))\,
(\hb_1\!-\!\psi_1)(\hb_2\!-\!\psi_2)(\hb_3\!-\!\psi_3)}\\
&\hspace{1in}
=\frac{1}{\hb_1\hb_2\hb_3}\ne^{\frac{\xi_{n;\a}(\al_i,q)}{\hb_1}+\frac{\xi_{n;\a}(\al_i,q)}{\hb_2}
+\frac{\xi_{n;\a}(\al_i,q)}{\hb_3}}\F_{n;\a}^{(0,0,0)}(\al_i,q)
\in \Q_{\al}\big[\big[\hb_1^{-1},\hb_2^{-1},\hb_3^{-1},q\big]\big]\,.
\end{split}\end{equation*}
The power series $\F_{n;\a}^{(0,0,0)}$ is described in \cite[Section~3]{CZ2}.
\end{rmk}

\noindent
In the case of  products of projective spaces and concavex sheaves~\e_ref{gensheaf_e},
$\al_i$ and $q$ in~\e_ref{Yrec_e} and~\e_ref{cYcZrel_e}
are replaced by $(\al_{i_1},\ldots,\al_{i_p})$ with $i_s\!\in\![n_s]$
and $(q_1,\ldots,q_p)$ with the right-hand sides modified as in~\e_ref{cZrelext_e}.
In the proof of Proposition~\ref{cYcZrel_prp}, we then obtain relations between
elementary symmetric polynomials~in
$$\{\al_{1;1},\ldots,\al_{1;n_1}\},\quad\ldots\quad,
\{\al_{p;1},\ldots,\al_{p;n_p}\}$$
that depend on $\a$, but not on $n_1,\ldots,n_p$.
They again hold if $|a_{1;s}|\!+\!\ldots\!+\!|a_{l;s}|\!\le\!n_s\!-\!2$ for all $s\!\in\![p]$
and thus in all cases.\\

\vspace{5mm}

\noindent
{\it Department of Mathematics, Princeton University, Princeton, NJ 08540\\
yaim@math.princeton.edu}\\

\noindent
{\it Department of Mathematics, SUNY Stony Brook, NY 11794-3651\\
azinger@math.sunysb.edu}

\end{document}